\newcommand{\lyxmathsym}[1]{\ifmmode\begingroup\def\b@ld{bold}
  \text{\ifx\math@version\b@ld\bfseries\fi#1}\endgroup\else#1\fi}
\providecommand{\tabularnewline}{\\}
\numberwithin{equation}{section}
\numberwithin{figure}{section}
 \theoremstyle{definition}
 \newtheorem*{defn*}{\protect\definitionname}
\newenvironment{lyxlist}[1]
{\begin{list}{}
{\settowidth{\labelwidth}{#1}
 \setlength{\leftmargin}{\labelwidth}
 \addtolength{\leftmargin}{\labelsep}
 }}
{\end{list}}
  \theoremstyle{plain}
  \newtheorem*{fact*}{\protect\factname}
\theoremstyle{plain}
\newtheorem{thm}{\protect\theoremname}[section]
  \theoremstyle{definition}
  \newtheorem{defn}[thm]{\protect\definitionname}
  \theoremstyle{remark}
  \newtheorem*{rem*}{\protect\remarkname}
  \theoremstyle{plain}
  \newtheorem{lem}[thm]{\protect\lemmaname}
  \theoremstyle{plain}
  \newtheorem{cor}[thm]{\protect\corollaryname}
  \theoremstyle{remark}
  \newtheorem{rem}[thm]{\protect\remarkname}
  \providecommand{\corollaryname}{Corollary}
  \providecommand{\definitionname}{Definition}
  \providecommand{\factname}{Fact}
  \providecommand{\lemmaname}{Lemma}
  \providecommand{\remarkname}{Remark}
\providecommand{\theoremname}{Theorem}
\begin{document}

\title[Equality of seven fundamental sets connected with $A(K)$]{Equality of seven fundamental sets connected with $A(K)$: analytic
capacity-free proofs}

\author{John M. Bachar Jr.}

\address{Department of Mathematics, California State University, Long Beach,
1250 Bellflower Blvd, Long Beach, CA 90840, USA}

\email{jmbachar@sbcglobal.net}

\keywords{Uniform function algebras; algebras of holomorphic functions; algebras
on planar sets; peak points; peaking criteria; analytic capacity.}

\subjclass[2000]{46J10; 46J15; 30A82}

\date{Jan 2015}
\begin{abstract}
Let $K$ be any compact set in $\mathbb{C}$ with connected complement,
let $A(K)$ be the uniform algebra of all complex functions continuous
on $K$ and holomorphic on $\mathrm{int}(K)$, let $\partial K$ be
the topological boundary of $K$, let $z\in K$, and let $M_{z}$
be the maximal ideal of functions in $A(K)$ vanishing at $z$. Using
only facts from classical complex analytic function theory, and without
using any results from the theory of analytic capacity, we prove the
following: $z\in\partial K$ iff $z$ is a peak point for $A(K)$
iff $z$ belongs to the Shilov boundary of $A(K)$ iff $z$ belongs
to the Bishop minimal boundary of $A(K)$ iff $M_{z}$ has a bounded
approximate identity iff $z$ satisfies the Bishop $\frac{1}{4}-\frac{3}{4}$
property iff $z$ is a strong boundary point for $A\left(K\right)$.
More specifically, the only results used in all proofs come from classical
analytic function theory, properties of open connected sets in the
complex plane, the Caratherodory extension theorem, the Riemann mapping
theorem, Euler's formula, Rudin's estimates on finite complex products,
properties of linear fractional transformations, the $\alpha$-th
root function in the complex plane, some new and striking Jordan curve
constructions (Jordan kissing paths), and the Cohen Factorization
theorem. No results are used from the theory of analytic capacity,
the theory of representing or annihilating measures, Dirichlet algebra
theory, Choquet boundary theory, or the Walsh-Lebesgue theorem.
\end{abstract}

\maketitle

\section{Introduction}

\global\long\def\Ric{\mathrm{Ric}}
\global\long\def\Hess{\mathrm{\mathrm{Hess}}}
\global\long\def\scal{\mathrm{\mathrm{scal}}}
\global\long\def\bbR{\mathbb{\mathbb{R}}}
\global\long\def\span{\mathrm{span}}
\global\long\def\rank{\mathrm{rank}}
\global\long\def\bbC{\mathbb{\mathbb{C}}}
\global\long\def\bbF{\mathbb{\mathbb{F}}}
\global\long\def\bbP{\mathbb{\mathbb{P}}}
\global\long\def\Aut{\mathrm{Aut}}
\global\long\def\End{\mathrm{End}}
\global\long\def\Hom{\mathrm{Hom}}
\global\long\def\abs#1{\left|#1\right|}
\global\long\def\norm#1{\left\Vert #1\right\Vert }
\global\long\def\div{\mathrm{div}}
\global\long\def\ad{\mathrm{ad}}
\global\long\def\tr{\mathrm{tr}}
\global\long\def\bfq{{\bf q}}
\global\long\def\bfv{{\bf v}}
\global\long\def\bfa{{\bf a}}
\global\long\def\bfj{{\bf j}}
\global\long\def\bfN{{\bf N}}
\global\long\def\bfT{{\bf T}}
\global\long\def\bfB{{\bf B}}
\global\long\def\bft{{\bf t}}
\global\long\def\bfn{{\bf n}}
\global\long\def\rmI{\textrm{I}}
\global\long\def\rmII{\textrm{II}}
\global\long\def\bfS{{\bf S}}
\global\long\def\sm{\setminus}
\global\long\def\int{\mathrm{int}}
\global\long\def\arc{\mathrm{arc}}

In \cite{Bachar1}, the following was established. Let $K\subset\mathbb{C}$
be compact, let $P(K)$ be the uniform closure in $C(K)$ of all polynomial
functions restricted to $K$, and let $A(K)$ be the uniform algebra
of all functions continuous on $K$ and holomorphic on $\int(K)$.
It is known (Mergelyan) that $P(K)=A(K)$ if and only if $\mathbb{C}\setminus K$
is connected. (The implication $P(K)=A(K)$ implies $\mathbb{C}\backslash K$
is connected is proved in \cite{Stout} without using results from
the theory of analytic capacity; for the converse implication, Mergelyan's
original proof depended on results from the theory of analytic capacity,
but the proofs given in\cite{Stout} (Stout), \cite{Browder} (F.
Browder), \cite{Rudin} (Rudin) and \cite{Carleson} (Carleson) do
not use analytic capacity theory). Let $\mathcal{K}=$\{$K$$\subset\mathbb{C}$:
$K$ is compact and $\mathbb{C}\setminus K$ is connected\}, let $\partial K$
be the topological boundary of $K$ and let $\mathcal{P}(P(K))=$
\{$z\in K$: there exists $f\in P(K)$ such that $f(z)=1$, $|f(w)|<1$
for $w\in K\setminus\left\{ z\right\} $\}, i.e., the set of peak
points relative to $P(K)$. By the maximum modulus principle, $\mathcal{P}(P(K))\subseteq\partial K$.
A long standing problem is to determine whether $\mathcal{P}(P(K))=\partial K$
for all $K\in\mathcal{K}$. The rest of this paper contains the details
of such a proof.
\begin{defn*}
In what follows, various types of topological boundary points are
used:
\begin{lyxlist}{00.00.0000}
\item [{(1)}] $(\partial K)_{\text{I}}=$ \{$z_{0}\in\partial K$: every
open ngbd of $z_{0}$ intersects $\int(K)$\} (denoted type I);
\item [{(2)}] $((\partial K)_{\text{II}}=$ \{$z_{0}\in\partial K$: some
open ngbd of $z_{0}$ does not intersect $\int(K)$\} (denoted type
II); 
\item [{(3)}] $z\in\partial K$ is \emph{circularly accessible (ca)} iff
there is a $w\in\mathbb{C}\setminus K$, an $r>0$, and a closed disk
$\overline{D}(w;r)=\left\{ u\in\mathbb{C}:\big|u-w\big|\leq r\right\} $
such that $\overline{D}(w;r)\cap K=\left\{ z\right\} $ and $\overline{D}(w;r)\sm\left\{ z\right\} \subset\bbC\sm K$;
\item [{(4)}] $z\in\partial K$ is \emph{segmentally accessible (sa)} iff
there is a segment $\left[z,w\right]=\left\{ u\in\mathbb{C}:u=z+t(w-z),0\leq t\leq1\right\} $
such that $\left[z,w\right]\setminus\left\{ z\right\} \subset\mathbb{C}\setminus K$;
\item [{(5)}] $z\in\partial K$ is a \emph{Jordan escape point (Je)} iff
there exists a Jordan curve, $\Gamma_{z}$, initiating and terminating
at $z$ such that $K\setminus\left\{ z\right\} $ is contained in
the bounded component of $\Gamma_{z}$ (which is simply-connected
and connected) and such that the winding number of $\Gamma_{z}$ w.r.t.
$K\setminus\left\{ z\right\} $ is +1;
\item [{(6)}] $z\in\partial K$ is an \emph{escape point (ep)} iff there
is a continuous one-to-one function $f:\left[0,1\right]\mapsto\mathbb{C}$
such that $f(0)=z$ and $f((0,1])\subset\mathbb{C}\setminus K$;
\item [{(7)}] $\mathcal{P}_{\text{ca}}(K)$, $\mathcal{P}_{\text{sa}}(K)$,
$\mathcal{P}_{\text{Je}}(K)$, $\mathcal{P}_{\text{ep}}(K)$ denote
the sets of ca, sa, Je and ep points of $\partial K$, respectively.
\end{lyxlist}
\end{defn*}
\medskip{}

\begin{fact*}
In \cite{Bachar1}, the following facts are proved;
\begin{lyxlist}{00.00.0000}
\item [{\emph{(i)}}] $(\partial K)_{\text{II}}\subseteq\mathcal{P}(P(K))$
(\cite[THEOREM 5.1.ii]{Bachar1}).
\item [{\emph{(ii)}}] If $\int(K)=\emptyset$, then $K=(\partial K)_{\text{II}}=\mathcal{P}(P(K))$;
\item [{\emph{(iii)}}] $\mathcal{P}_{\textrm{ca}}(K)\subseteq\mathcal{P}_{\textrm{sa}}(K)\subseteq\mathcal{P}_{\text{Je}}(K)=\mathcal{P}_{\text{ep}}(K)\subseteq\mathcal{P}(P(K))\subseteq\partial K$
and $\mathcal{P}_{\textrm{ca}}(K)$ is dense in $\partial K$.
\item [{\emph{(iv)}}] If $\int(K)\neq\emptyset$ and if $\mathcal{P}_{\textrm{ca}}(K)=(\partial K)_{\text{I}}$,
or $\mathcal{P}_{\text{sa}}(K)=(\partial K)_{\text{I}}$, or $\mathcal{P}_{\text{Je}}(K)=(\partial K)_{\text{I}}$,
or $\mathcal{P}_{\text{ep}}(K)=(\partial K)_{\text{I}}$, then $\mathcal{P}(P(K))=\partial K$,
i.e., every boundary point is a peak point.
\item [{\emph{(v)}}] If $\int(K)\neq\emptyset$ and if $\partial(\int(K))$
is Jordan curve, then every point of $\partial K$ is a peak point
for $P(K)$ (\cite[THEOREM 5.5]{Bachar1}).
\end{lyxlist}

\medskip{}

Facts (i) through (v) were proved without using any results from the
theory of analytic capacity.

\end{fact*}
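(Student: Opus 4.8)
The plan is to re-derive (i)--(v) from \cite{Bachar1} using only classical planar function theory and plane topology. I work in the paper's standing setting $K\in\mathcal{K}$, so that $\mathbb{C}\setminus K$ is connected and Mergelyan (in its analytic-capacity-free form cited in the Introduction) gives $P(K)=A(K)$; I use these freely. For (i), fix $z_{0}\in(\partial K)_{\text{II}}$ and an open disk $D(z_{0},r)$ with $D(z_{0},r)\cap\int(K)=\emptyset$, so that $K\cap D(z_{0},r)$ has empty interior. I would reduce to a local peak statement near $z_{0}$: on the thin piece $K\cap\overline{D}(z_{0},r/2)$ (which, together with a short external arc, has connected complement and empty interior, so polynomials are uniformly dense) one gets polynomials peaking at $z_{0}$, while for the far piece $K\setminus D(z_{0},r/2)$, to which $z_{0}$ is exterior, a linear-fractional and $\alpha$-th root device supplies a factor equal to $1$ at $z_{0}$ and small there; one then multiplies finitely many such factors and forces $\abs{f}<1$ off $z_{0}$ using Rudin's estimates on finite complex products. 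This quantitative multiplication is precisely the $\tfrac14$--$\tfrac34$ bookkeeping in disguise.

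Statement (ii) is then immediate: if $\int(K)=\emptyset$ every point of $K$ is a boundary point with a (vacuous) neighborhood missing $\int(K)$, so $K=(\partial K)_{\text{II}}$; combining with (i) and the maximum-modulus containment $\mathcal{P}(P(K))\subseteq\partial K=K$ gives the three-way equality. The core is (iii), which I would prove link by link. The inclusion $\mathcal{P}_{\text{ca}}(K)\subseteq\mathcal{P}_{\text{sa}}(K)$ is elementary: if $\overline{D}(w;r)\cap K=\{z\}$ then the whole segment $[z,w]$ lies in that convex disk, so $[z,w]\setminus\{z\}\subseteq\mathbb{C}\setminus K$. The inclusion $\mathcal{P}_{\text{Je}}(K)\subseteq\mathcal{P}_{\text{ep}}(K)$ is trivial (restrict $\Gamma_{z}$ to $[0,\tfrac12]$). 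For $\mathcal{P}_{\text{sa}}(K)\subseteq\mathcal{P}_{\text{Je}}(K)$ and the reverse $\mathcal{P}_{\text{ep}}(K)\subseteq\mathcal{P}_{\text{Je}}(K)$ one uses the ``Jordan kissing path'' construction: starting from the accessibility segment (respectively the injective escape arc), which leaves $z$ into $\mathbb{C}\setminus K$, one first prolongs it inside the connected set $\mathbb{C}\setminus K$ out past $K$ and around $K$ (possible since $K$ has no holes), then thickens the resulting arc into a thin Jordan curve that hugs it on both sides and pinches shut exactly at $z$, so that the bounded, simply connected component contains $K\setminus\{z\}$ with winding number $+1$. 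I expect \textbf{this topological step to be the main obstacle}: arranging that the ``kissing'' curve is genuinely Jordan (no self-intersections) while keeping the winding number equal to $+1$ and keeping $\Gamma_{z}\setminus\{z\}$ inside $\mathbb{C}\setminus K$ is precisely where the real work of \cite{Bachar1} lies.

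The remaining link $\mathcal{P}_{\text{Je}}(K)\subseteq\mathcal{P}(P(K))$ I would prove as follows: let $\Omega$ be the bounded simply connected component of $\mathbb{C}\setminus\Gamma_{z}$, so $K\subseteq\overline{\Omega}$; by the Riemann mapping theorem there is a conformal $\varphi\colon\Omega\to\mathbb{D}$, and since $\partial\Omega=\Gamma_{z}$ is a Jordan curve the Carath\'eodory extension theorem extends $\varphi$ to a homeomorphism $\overline{\Omega}\to\overline{\mathbb{D}}$; with $\zeta_{0}=\varphi(z)\in\partial\mathbb{D}$, the function $h=\tfrac12\bigl(1+\overline{\zeta_{0}}\,\varphi\bigr)$ is continuous on $\overline{\Omega}$, holomorphic on $\Omega$, has $h(z)=1$ and $\abs{h}<1$ on $\overline{\Omega}\setminus\{z\}$ (on $\Omega$ because $\abs{\varphi}<1$, on $\Gamma_{z}\setminus\{z\}$ because $\overline{\zeta_{0}}\varphi\ne1$ there). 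Since $\mathbb{C}\setminus\overline{\Omega}$ is connected, Mergelyan lets one approximate $h$ uniformly on $\overline{\Omega}$ by polynomials, and a short linear-fractional normalization of a good approximant yields a genuine peak function in $P(K)$ at $z$ after restriction to $K$. The containment $\mathcal{P}(P(K))\subseteq\partial K$ is the maximum modulus principle, and $\mathcal{P}_{\text{ca}}(K)$ is dense in $\partial K$ by a ``push a disk'' argument: given $z\in\partial K$ and $\varepsilon>0$, choose $w_{0}\in\mathbb{C}\setminus K$ with $\abs{w_{0}-z}<\varepsilon/2$, set $t_{0}=\mathrm{dist}(w_{0},K)$, pick a contact point $z'\in\partial D(w_{0},t_{0})\cap K$, and slide the center toward $z'$: the internally tangent disks $\overline{D}\bigl(w_{0}+\lambda(z'-w_{0}),\lambda t_{0}\bigr)$ meet $K$ only at $z'$, so $z'\in\mathcal{P}_{\text{ca}}(K)$ and $\abs{z'-z}\le2\abs{w_{0}-z}<\varepsilon$.

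Finally (iv) is formal: write $\partial K=(\partial K)_{\text{I}}\cup(\partial K)_{\text{II}}$; by (i) the type II part lies in $\mathcal{P}(P(K))$, and by hypothesis $(\partial K)_{\text{I}}$ coincides with one of $\mathcal{P}_{\text{ca}}(K),\mathcal{P}_{\text{sa}}(K),\mathcal{P}_{\text{Je}}(K),\mathcal{P}_{\text{ep}}(K)$, each contained in $\mathcal{P}(P(K))$ by (iii); hence $\partial K\subseteq\mathcal{P}(P(K))\subseteq\partial K$. For (v), if $J:=\partial(\int K)$ is a Jordan curve then $\int(K)$ is the bounded Jordan domain it bounds, so any $z\in(\partial K)_{\text{I}}$ satisfies $z\in\overline{\int(K)}\setminus\int(K)=J$; conversely, the part of $K$ lying in the outer Jordan region consists of type-II ``whiskers'' that can be enclosed, so every point of $J$ is a Je (indeed circularly accessible) point of $K$ by running the kissing-path/Carath\'eodory construction on the outer side of $J$. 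Checking moreover that a ca point off $J$ must be type II gives $(\partial K)_{\text{I}}=\mathcal{P}_{\text{ca}}(K)$, and (iv) applies. The two delicate points recurring throughout are keeping every construction polynomial rather than merely holomorphic --- always handled by the analytic-capacity-free Mergelyan on the relevant thin or Jordan-bounded set --- and the Jordan-kissing-path topology in (iii) and (v), which is the genuinely new ingredient.
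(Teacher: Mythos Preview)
The paper does not prove this Fact block at all: it is stated in the Introduction as a summary of results established in the author's earlier paper \cite{Bachar1}, with explicit cross-references (\cite[THEOREM 5.1.ii]{Bachar1}, \cite[THEOREM 5.5]{Bachar1}). So there is no ``paper's own proof'' to compare your proposal against; you have produced a reconstruction of what \cite{Bachar1} presumably contains.

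That said, your reconstruction lines up well with the methods the present paper \emph{does} develop later for its own purposes. Your ``push a disk'' density argument for $\mathcal{P}_{\text{ca}}(K)$ is essentially the paper's Theorem~4.9 (with one typo: the shrunk disk should have radius $(1-\lambda)t_{0}$, not $\lambda t_{0}$). Your Carath\'eodory/Riemann-map route from $\mathcal{P}_{\text{Je}}(K)$ to $\mathcal{P}(P(K))$ is exactly the mechanism behind the paper's Theorem~6.2, and your identification of the ``Jordan kissing path'' topology as the delicate step matches Section~6(a). One simplification you missed: once you have $h$ continuous on $\overline{\Omega}$ and holomorphic on $\Omega\supseteq\int(K)$, the restriction $h|_{K}$ already lies in $A(K)=P(K)$, so the extra Mergelyan approximation on $\overline{\Omega}$ and the subsequent ``linear-fractional normalization'' are unnecessary.

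For (v), your final sentence is slightly garbled: statement~(iv) as written demands the \emph{equality} $\mathcal{P}_{\text{ca}}(K)=(\partial K)_{\text{I}}$, but your argument (and indeed the proof you gave of (iv) itself) only uses and only needs the inclusion $(\partial K)_{\text{I}}\subseteq\mathcal{P}_{\text{ca}}(K)$. Since type~II boundary points can certainly be circularly accessible, the equality in (iv) is too strong to be what is actually used; your reading of (iv) as ``$(\partial K)_{\text{I}}\subseteq\mathcal{P}_{\text{ca}}(K)$ suffices'' is the correct operational content, and your (v) argument should simply invoke that inclusion rather than try to force the equality.
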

Let $\mathcal{K}^{*}$ be the set of $K\in\mathcal{K}$ such that:
$\mathrm{int}(K)\neq\emptyset$ and there is a $z\in(\partial K)_{\text{I}}$
that is not an ep (equivalently, Je) point. For $K\in\mathcal{K}^{*}$,
the proof that each non-ep (or non-Je) point of $(\partial K)_{\text{I}}$
is a peak point uses the Curtis Peak Point Criterion \cite{Bachar1,Curtis}
whose proof requires results from the theory of analytic capacity.

Thus, \cite{Bachar1} contains the complete proof that: $\mathcal{P}(P(K))=\partial K$
for all $K\in\mathcal{K}.$

We now proceed with the new capacity-free proofs of the results stated
in the Abstract.

\section{Notations and properties of the $\alpha$-th root Function}

The $\alpha$-th root function is an essential tool for the results
proved in what follows. The properties listed below are standard results
from complex variable theory.

For $r>0$ and $z_{0}\in\mathbb{C}$, the closed disk, $\left\{ z\in\mathbb{C}:|z-z_{0}|\leq r\right\} $,
is denoted by $\overline{D}(z_{0};r)$; the open disk, $\left\{ z\in\mathbb{C}:|z-z_{0}|<r\right\} $,
is denoted by $D(z_{0};r)$; the circle, $\left\{ z\in\mathbb{C}:|z-z_{0}|=r\right\} $,
is denoted $C(z_{0};r)$; the topological boundary of a set $K$ is
denoted by $\partial K$; clearly, $\partial\overline{D}(z_{0};r)=\partial D(z_{0};r)=C(z_{0};r)$.

\medskip{}

\begin{defn}
We define several basic concepts and their properties in the following
list:
\begin{itemize}
\item The \emph{exponential function} on $\mathbb{C}$, $\exp(z)=\sum_{n=0}^{\infty}\frac{z^{n}}{n!}$,
is an entire function; its restriction to the infinite open strip,
\[
S_{\left(\text{-}\pi\text{,}\pi\right)}=\left\{ z=x+iy\in\mathbb{C}:(-\infty<x<\infty),(-\pi<y<\pi)\right\} 
\]
is a conformal map from $S_{\left(\text{-}\pi\text{,}\pi\right)}$
onto $\mathbb{C}\sm(-\infty,0]$ with conformal inverse, $\log:=\exp^{\text{-1}}$,
that maps $\mathbb{C}(-\infty,0]$ onto $S_{\left(\text{-}\pi\text{,}\pi\right)}$.
For any $z_{1}$ and $z_{2}$ in $S_{\left(\text{-}\pi\text{,}\pi\right)}$,
$\exp(z_{1}+z_{2})=\exp(z_{1})\exp(z_{2})$, and for any $z_{1}$and
$z_{2}$ in $\mathbb{C}\sm(-\infty,0]$, $\log(z_{1}z_{2})=\log(z_{1})+\log(z_{2})$.
\item The \emph{argument function}, $\arg$, is defined for $z\in\bbC\sm(-\infty,0]$\ by
$\arg(z)=\mathrm{Im}(\log(z))$, and is continuous. 
\item The \emph{$\alpha$-th root function}, $\mathbb{Z}^{\alpha}$, for
$\alpha\in(0,1]$, is defined by $\mathbb{Z}^{\alpha}(z)\equiv(\mathrm{exp}\circ m_{\alpha}\circ\mathrm{log})(z)$,
for $z\in\mathbb{C}\sm(-\infty,0]$, and $\mathbb{Z}^{\alpha}(0)=0$,
where $m_{\alpha}(z)=\alpha z$, for $z\in\mathbb{C}$. In other words,
$\mathbb{Z}^{\alpha}=\exp\circ m_{\alpha}\circ\log$. Now $\mathbb{Z}^{\alpha}$
is conformal on $\mathbb{C}\sm(-\infty,0]$, and is a homeomorphism
on $\mathbb{C}\ensuremath{(-\infty,0)}$. 
\item If we define the \emph{open cone}, 
\[
C_{\alpha}\boldsymbol{\equiv}\left\{ w\in\mathbb{C}\sm(-\infty,0]:|\arg(w)|<\alpha\pi\right\} ,
\]
then $\mathbb{Z}^{\alpha}$ maps $\mathbb{C}\sm(-\infty,0]$ conformally
onto $C_{\alpha}$, $(\mathbb{Z}^{\alpha})^{\text{-1}}$ maps $C_{\alpha}$
conformally onto $\mathbb{C}\sm(-\infty,0]$, $\mathbb{Z}^{\alpha}$maps
$\mathbb{C}\sm(-\infty,0)$ homeomorhically onto $\left\{ 0\right\} \cup C_{\alpha}$,
and ($\mathbb{Z}^{\alpha}$)$^{\text{-1}}$ maps $\left\{ 0\right\} \cup C_{\alpha}$
homeomorphically onto $\mathbb{C}\sm(-\infty,0)$. 
\item Note that 
\begin{eqnarray*}
(\mathbb{Z}^{\alpha})^{\text{-1}} & = & (\exp\circ m_{\alpha}\circ\log)^{\text{-1}}\\
 & = & (\log)^{\text{-1}}\circ(m_{\alpha})^{\text{-1}}\circ(\exp)^{\text{-1}}\\
 & = & \exp\circ m_{\frac{1}{\alpha}}\circ\log
\end{eqnarray*}
is defined on $C_{\alpha}$ and $(\mathbb{Z}^{\alpha})^{\text{-1}}(0)=0$. 
\item Note that $C_{1}=\mathbb{C}\sm(-\infty,0]$.
\item For each $z\in\mathbb{C}\sm(-\infty,0]$, we have the unique representation,
$z=|z|\exp(i(\arg(z)))$, where $-\pi<\arg(z)<\pi$. Furthermore,
$\log(z)=\log(|z|)+i(\arg(z))$. 
\item For $\alpha\in(0,1]$ and $z\in\mathbb{C}\sm(-\infty,0)$, $\mathbb{Z}^{\alpha}(z)=0$
iff $z=0$, and for $z\neq0$, $\mathbb{Z}^{\alpha}(z)=|z|^{\alpha}\exp(i\alpha(\arg(z))$. 
\item For $\alpha\in(0,1]$ and $z\in C_{\alpha}$, $(\mathbb{Z}^{\alpha})^{\text{-1}}(z)=0$
iff $z=0$, and for $z\neq0$, $(\mathbb{Z}^{\alpha})^{\text{-1}}(z)=|z|^{\frac{1}{\alpha}}\exp(i\frac{1}{\alpha}\arg(z))$.
\item For $0\leq\rho$ and $\rho<r$, define the \emph{open deleted annulus},
\begin{eqnarray*}
\boldsymbol{} &  & \mathrm{And}(0;\rho,r)\\
 &  & \equiv\left\{ z\in\bbC\sm\left\{ 0\right\} :\rho<|z|<r,-\pi<\arg(z)<\pi\right\} \\
 &  & \subset\mathbb{C}\sm(-\infty,0]
\end{eqnarray*}

\item For each $0<\delta<1$ and $0<\theta_{0}<\pi$, define \emph{the arc
along the circle}, $C(0;\delta)$, from $\delta\exp(-i\theta_{0})$
to $\delta\exp(i\theta_{0})$ by 
\begin{eqnarray*}
 &  & \mathrm{arc}[\delta\exp(-i\theta_{0}),\delta\exp(i\theta_{0})]\\
 &  & =\left\{ \delta\exp(i\theta):-\theta_{0}\leq\theta\leq\theta_{0}\right\} .
\end{eqnarray*}
For each $\alpha\in(0,1]$,
\begin{eqnarray*}
 &  & \mathbb{Z}^{\alpha}(\mathrm{arc}[\delta\exp(-i\theta_{0}),\delta\exp(i\theta_{0})])\\
 &  & =\mathrm{arc}[\delta^{\alpha}\exp(-i\alpha\theta_{0}),\delta\exp(i\alpha\theta_{0})]
\end{eqnarray*}
Furthermore, as $\alpha\searrow0$, $\delta^{\alpha}\nearrow1$.
\end{itemize}
\end{defn}
\medskip{}

\begin{rem*}
For later use in the construction of infinite products of functions
in $P(K)$, we need to examine the image of ($\overline{D}$($\frac{1}{2}$;
$\frac{1}{2}$) under the map $\mathbb{Z}^{\alpha}$.
\end{rem*}
\medskip{}

\begin{lem}
\emph{(``Teardrop'' Lemma)} \\
For $\alpha\in(0,1)$:
\begin{lyxlist}{00.00.0000}
\item [{\emph{(2.2.1)}}] $\mathbb{Z}^{\alpha}((\overline{D}(\frac{1}{2};\frac{1}{2}))$
is compact.
\item [{\emph{(2.2.2)}}] $\mathbb{Z}^{\alpha}(\partial(\overline{D}(\frac{1}{2};\frac{1}{2})))$
a Jordan curve.
\item [{\emph{(2.2.3)}}] $\mathbb{Z}^{\alpha}(\partial(\overline{D}(\frac{1}{2};\frac{1}{2})))=\left\{ 0\right\} \cup\left\{ (\cos\theta)^{\alpha}\exp(i\alpha\theta):-\frac{\pi}{2}<\theta<\frac{\pi}{2}\right\} $.
We put $t_{\alpha}\boldsymbol{\equiv}\mathbb{Z}^{\alpha}(\partial(\overline{D}(\frac{1}{2};\frac{1}{2})))$.
The curve $t_{\alpha}$, looks like the boundary of a teardrop.
\item [{\emph{(2.2.4)}}] $t_{\alpha}=\mathbb{Z}^{\alpha}(\partial(\overline{D}(\frac{1}{2};\frac{1}{2}))\subset\left\{ 0,1\right\} \cup(C_{\alpha/2}\cap D(\frac{1}{2};\frac{1}{2}))$.
\item [{\emph{(2.2.5)}}] $\int t_{\alpha}=\int(\mathbb{Z}^{\alpha}((\overline{D}(\frac{1}{2};\frac{1}{2})))$
is open and simply-connected and $t_{\alpha}\cup\int t_{\alpha}\subset\left\{ 0,1\right\} \cup(C_{\alpha/2}\cap D(\frac{1}{2};\frac{1}{2}))$.
We put $T_{\alpha}\boldsymbol{\equiv}t_{\alpha}\cup\int t_{\alpha}$
and call it an \emph{$\alpha$-teardrop}. 
\item [{\emph{(2.2.6)}}] For all $0<\rho<\frac{1}{2}$ and all 0$<\delta<\frac{1}{2}$,
there exists 0$<\alpha_{\rho\text{,}\delta}<1$ such that for all
$0<\alpha\leq\alpha_{\rho\text{,}\delta}$, $\mathbb{Z}^{\alpha}$
maps $\overline{D}(\frac{1}{2};\frac{1}{2})\sm\overline{D}(0;\rho)$
into $D(1;\delta)$.
\item [{\emph{(2.2.7)}}] For all $0<\theta<\frac{\pi}{2}$, there exists
$0<\alpha_{\rho\text{,}\delta\text{,}\theta}\leq\alpha_{\rho\text{,}\delta}$
such that for all $0<\alpha\leq\alpha_{\rho\text{,}\delta\text{,}\theta}$,
(2.2.6) holds and $|\arg(z)|<\theta$ for every $z\in T_{\alpha}$.
\end{lyxlist}
\end{lem}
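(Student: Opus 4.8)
The plan is to exploit the explicit polar description of the circle $C(\tfrac12;\tfrac12)$ and push it through $\mathbb{Z}^{\alpha}$ via the formula $\mathbb{Z}^{\alpha}(z)=|z|^{\alpha}\exp(i\alpha\arg(z))$ recorded in Definition 2.1. First I would parametrize $\partial\overline{D}(\tfrac12;\tfrac12)$: a point $z$ on this circle other than $0$ has the form $z = \cos\theta\,\exp(i\theta)$ with $-\tfrac{\pi}{2}<\theta<\tfrac{\pi}{2}$ (so that $|z|=\cos\theta$ and $\arg(z)=\theta$), and $z=0$ corresponds to the limiting values $\theta\to\pm\tfrac{\pi}{2}$. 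Since $\cos\theta>0$ on the open interval, the circle minus the origin lies in $\mathbb{C}\sm(-\infty,0]$, so $\mathbb{Z}^{\alpha}$ is the nice conformal branch there, and $\mathbb{Z}^{\alpha}(z) = (\cos\theta)^{\alpha}\exp(i\alpha\theta)$; sending $\theta\to\pm\tfrac{\pi}{2}$ gives $(\cos\theta)^{\alpha}\to 0$, so the image closes up at $0$. This immediately yields (2.2.3), and (2.2.1) follows since $\mathbb{Z}^{\alpha}$ is continuous on $\overline{D}(\tfrac12;\tfrac12)\subset\mathbb{C}\sm(-\infty,0)$ (where it is a homeomorphism onto its image) and the disk is compact.

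For (2.2.2) I would argue that $\theta\mapsto (\cos\theta)^{\alpha}\exp(i\alpha\theta)$ on $[-\tfrac{\pi}{2},\tfrac{\pi}{2}]$ is a continuous closed curve which is injective on the open interval and takes the value $0$ exactly at the two endpoints; injectivity on the open interval follows because $\mathbb{Z}^{\alpha}$ restricted to $\mathbb{C}\sm(-\infty,0)$ is a homeomorphism and $\theta\mapsto\cos\theta\exp(i\theta)$ is injective on $(-\tfrac{\pi}{2},\tfrac{\pi}{2})$. Hence the image is a Jordan curve (a continuous injective image of a circle, with the endpoint identification providing the single self-touch at $0$). For (2.2.4)–(2.2.5): from the formula, a nonzero image point has modulus $(\cos\theta)^{\alpha}\le 1$ with equality only at $\theta=0$ (giving the point $1$), and argument $\alpha\theta$ with $|\alpha\theta|<\alpha\tfrac{\pi}{2}$, so the image lies in $C_{\alpha/2}$; a short estimate shows $(\cos\theta)^{\alpha}\exp(i\alpha\theta)\in D(\tfrac12;\tfrac12)$ for $\theta\ne 0$ (the point $\to 1$ as $\theta\to 0$ sits on the boundary circle and $\to 0$ at the ends, and in between the curve bulges inward of the unit circle toward the real axis because the argument is compressed by the factor $\alpha$), giving $t_\alpha\subset\{0,1\}\cup(C_{\alpha/2}\cap D(\tfrac12;\tfrac12))$. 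Then $\int t_\alpha = \mathbb{Z}^{\alpha}(\int\overline{D}(\tfrac12;\tfrac12)) = \mathbb{Z}^{\alpha}(D(\tfrac12;\tfrac12))$ is open, and it is simply connected as the conformal image of a disk; since $C_{\alpha/2}\cap D(\tfrac12;\tfrac12)$ is convex (intersection of a convex cone and a disk) and contains the curve, it contains the bounded complementary region, giving the inclusion in (2.2.5).

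For (2.2.6), fix $0<\rho,\delta<\tfrac12$. On $\overline{D}(\tfrac12;\tfrac12)\sm\overline{D}(0;\rho)$ we have $\rho\le|z|\le 1$ and $|\arg(z)|<\tfrac{\pi}{2}$, so $|\mathbb{Z}^{\alpha}(z)| = |z|^{\alpha}\in[\rho^{\alpha},1]$ with $\rho^{\alpha}\nearrow 1$ as $\alpha\searrow 0$ (the key monotonicity noted at the end of Definition 2.1), and $\arg(\mathbb{Z}^{\alpha}(z)) = \alpha\arg(z)\to 0$ uniformly; hence $\mathbb{Z}^{\alpha}(z)\to 1$ uniformly on this set, so for $\alpha$ small enough it lands in $D(1;\delta)$, and I take $\alpha_{\rho,\delta}$ to be any such threshold. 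Statement (2.2.7) is then immediate: for $z\in T_\alpha\subset\{0,1\}\cup(C_{\alpha/2}\cap D(\tfrac12;\tfrac12))$ every point has $|\arg(z)|<\alpha\tfrac{\pi}{2}$ (the endpoints $0,1$ require the convention $\arg(0),\arg(1)=0$, or are handled separately), so choosing $\alpha_{\rho,\delta,\theta}=\min(\alpha_{\rho,\delta},\,2\theta/\pi)$ forces $|\arg(z)|<\theta$ while retaining (2.2.6).

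The main obstacle I anticipate is (2.2.4)/(2.2.5): proving cleanly that the teardrop curve stays inside $D(\tfrac12;\tfrac12)$ (not merely inside the unit disk) requires a genuine inequality, namely that $(\cos\theta)^{\alpha}\exp(i\alpha\theta)$ satisfies $|(\cos\theta)^{\alpha}\exp(i\alpha\theta) - \tfrac12| < \tfrac12$ for $\theta\in(-\tfrac\pi2,\tfrac\pi2)\sm\{0\}$, equivalently $(\cos\theta)^{\alpha}<\cos(\alpha\theta)$; this holds because $(\cos\theta)^{\alpha}\le\cos^{\alpha\cdot 1}\theta$ and one compares with the concavity/subadditivity of $\log\cos$ — but it needs to be done carefully rather than waved through. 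The rest is bookkeeping with the explicit polar formulas and the monotonicity $\rho^{\alpha}\nearrow 1$.
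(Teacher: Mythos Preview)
Your proposal is correct and mirrors the paper's approach closely: the same polar parametrization $z=\cos\theta\,e^{i\theta}$ of $C(\tfrac12;\tfrac12)$, the same identification of the key inequality $(\cos\theta)^{\alpha}<\cos(\alpha\theta)$ for (2.2.4), and the same $\rho^{\alpha}\nearrow 1$ uniform-convergence argument for (2.2.6)--(2.2.7). The paper proves the key inequality by directly differentiating $g(\theta)=(\cos\theta)^{\alpha}/\cos(\alpha\theta)$ and checking $g'(\theta)<0$, so your concavity-of-$\log\cos$ suggestion is a minor (equally valid) variant, and your convexity argument for the interior inclusion in (2.2.5) is actually more explicit than the paper's one-line appeal to the Jordan curve theorem.
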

\medskip{}

\begin{proof}
(2.2.1) $\mathbb{Z}^{\alpha}$ is a homeomorphism on $\mathbb{C}\sm(-\infty,0)$,
so $\mathbb{Z}^{\alpha}((\overline{D}(\frac{1}{2};\frac{1}{2}))$
is compact.

(2.2.2) The function $h:\mathbb{T}\rightarrow\partial(\overline{D}(\frac{1}{2};\frac{1}{2})))$,
defined by $h(z)=\frac{1}{2}(1+z)$, $z\boldsymbol{\in}\mathbb{T}$,
is a homeomorphism, so $\partial(\overline{D}(\frac{1}{2};\frac{1}{2}))$
is a Jordan curve. Since $\mathbb{Z}^{\alpha}$ is a homeomorphism,
and since a homeomorphic images of a Jordan curve is also a Jordan
curve, the result follows.

(2.2.3) For $0\in\partial(\overline{D}(\frac{1}{2};\frac{1}{2}))$,
$\mathbb{Z}^{\alpha}(0)=0$. For $z\in\partial(\overline{D}(\frac{1}{2};\frac{1}{2}))\sm\left\{ 0\right\} $,
$z=(\cos\theta)\exp(i\theta)$, where $\theta=\arg(z)\in(-\frac{\pi}{2},\frac{\pi}{2})$.
This follows from the well-known fact that for any three distinct
points on a circle, two of which are at the opposite ends of a diameter
line, the triangle formed by using these points as vertices is a right
triangle. Thus, 
\begin{eqnarray*}
\mathbb{Z}^{\alpha}(z) & = & \mathbb{Z}^{\alpha}((\cos\theta)\exp(i\theta))\\
 & = & |\cos\theta|^{\alpha}\exp(i\alpha\theta)\\
 & = & (\cos\theta)^{\alpha}\exp(i\alpha\theta)
\end{eqnarray*}
Thus, (2.2.3) is proved.

(2.2.4) We know $\mathbb{Z}^{\alpha}(0)=0$. For $z\in\partial(\overline{D}(\frac{1}{2};\frac{1}{2}))\sm\left\{ 0\right\} $,
we proceed as follows. Define $f_{_{\alpha}\text{: }}(-\frac{\pi}{2},\frac{\pi}{2})\rightarrow\partial(\overline{D}(\frac{1}{2};\frac{1}{2}))\sm\left\{ 0\right\} $
by $f_{\alpha}(\theta)=(\cos\alpha\theta)\exp(i\alpha\theta)$. This
map is one-to-one, continuous and onto. To show that $\mathbb{Z}^{\alpha}((\partial(\overline{D}(\frac{1}{2};\frac{1}{2}))\sm\left\{ 0\right\} )\subset D(\frac{1}{2};\frac{1}{2})$,
it suffices to show that $(\cos\theta)^{\alpha}<\cos\alpha\theta$,
for $0\leq\theta<\frac{\pi}{2}$, or, equivalently, that $g(\theta):=\frac{\text{(cos }\theta\text{)}^{\alpha}}{\text{cos }\alpha\theta}<1$
for 0$<\theta<\frac{\pi}{2}$. The latter is true if $g^{\prime}(\theta)<0$
for 0$<\theta<\frac{\pi}{2}$. But a straightforward calculation shows
that $g^{\prime}(\theta)=\frac{\text{- }\alpha\text{(cos }\theta\text{)}^{\alpha}\text{sin((1 - }\alpha\text{)}\theta\text{)}}{\text{(cos }\theta\text{)(cos }\alpha\theta\text{)}^{2}}<0$
since every term (except $-\alpha$) in the expression is positive
for 0$<\theta<\frac{\pi}{2}$, and $-\alpha<0$. Since $g(\theta)=g(-\theta)$,
the result also holds for $-\frac{\pi}{2}<\theta<0$. Finally, we
show that $\mathbb{Z}^{\alpha}((\partial(\overline{D}(\frac{1}{2};\frac{1}{2}))\sm\left\{ 0\right\} )\subset C_{\alpha\text{/2}}$.
Since $\mathbb{Z}^{\alpha}(\partial(\overline{D}(\frac{1}{2};\frac{1}{2})))=\left\{ 0\right\} \cup f_{_{\alpha}}((-\frac{\pi}{2},\frac{\pi}{2}))$,
we immediately see that $\arg(f_{\alpha}(\theta))=\arg((\cos\alpha\theta)\exp(i\alpha\theta))=\alpha\theta$
for $\theta\in(-\frac{\pi}{2},\frac{\pi}{2})$, hence $\alpha\theta\in(-\frac{\alpha}{2}\pi,\frac{\alpha}{2}\pi)$,
i.e., $\mathbb{Z}^{\alpha}((\partial(\overline{D}(\frac{1}{2};\frac{1}{2}))\sm\left\{ 0\right\} )\subset C_{\alpha\text{/2}}$.

(2.2.5) This follows from the Jordan curve theorem.

(2.2.6) The two points of intersection, $C(0;\delta)\cap C(\frac{1}{2};\frac{1}{2})$,
are $\delta\exp(-i\theta_{\delta})$ and $\delta\exp(i\theta_{\delta})$,
where $\theta_{\delta}$ satisfies $\cos\theta_{\delta}=\delta$ (see
(2.2.3)). To finish the proof, it suffices to show that there exist
$\alpha\in$(0, 1) such that $\mathbb{Z}^{\alpha}$ maps the arc,
$\mathrm{arc}[\delta\exp(-i\theta_{\delta}),\delta\exp(i\theta_{\delta})]$,
into $\overline{D}(1;\rho)$. But 
\begin{eqnarray*}
 &  & \mathbb{Z}^{\alpha}(\mathrm{arc}[\delta\exp(-i\theta_{\delta}),\delta\exp(i\theta_{\delta})])\\
 &  & =\mathrm{arc}[\delta^{\alpha}\exp(-i\alpha\theta_{\delta}),\delta\exp(i\alpha\theta_{\delta})]
\end{eqnarray*}
Since $\delta^{\alpha}\nearrow1$ as $\alpha\searrow0$, it follows
that there exists $\alpha_{\rho\text{,}\delta}$ sufficiently close
to 0 such that the distance from every point on $\mathrm{arc}[\delta^{\alpha}\exp(-i\alpha\theta_{\delta}),\delta\exp(i\alpha\theta_{\delta})]$
to 1 is less than $\rho$.

(2.2.7) This is clear from (2.2.6) plus the fact that $|\arg(z)|\leq\alpha$
for every $z\in T_{\alpha}$. Hence, by choosing $\alpha\leq\min\left\{ \theta,\alpha_{\rho\text{,}\delta}\right\} $,
the proof is complete.
\end{proof}

\section{Results on linear fractional transformations (lft's)}

For later use, we need the following lemma.

\medskip{}

\begin{lem}
For any pair, $(v_{1},v_{2})$, of distinct points on the boundary,
$\mathbb{T}$, of the closed unit disk, $\overline{D}(0;1)$, there
exists a lft, $f$, of $\overline{D}(0;1)$ onto $\overline{D}(0;1)$
such that $f(v_{1})=1$ and $f(v_{2})=-1$.
\end{lem}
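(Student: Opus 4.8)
The plan is to reduce everything to the elementary fact that the automorphism group of the unit disk acts transitively (in fact simply transitively, with the rotation of the interior absorbing the extra freedom) on the boundary circle $\mathbb{T}$. First I would recall the standard family of linear fractional transformations preserving $\overline{D}(0;1)$: for $a \in D(0;1)$ and $\lambda \in \mathbb{T}$, the map $\varphi_{a,\lambda}(z) = \lambda\,\frac{z-a}{1-\bar a z}$ carries $\overline{D}(0;1)$ bijectively onto itself, maps $\mathbb{T}$ onto $\mathbb{T}$, and is a lft. It is a standard exercise that every lft of the disk onto itself has this form, but I do not even need that direction — I only need that the maps $\varphi_{a,\lambda}$ are lft's of the disk onto the disk, which follows from computing $|\varphi_{a,\lambda}(z)| = 1$ when $|z| = 1$ and noting $\varphi_{a,\lambda}(a) = 0 \in D(0;1)$, so by the open mapping / maximum modulus principle the image of the open disk is the open disk.

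Given distinct $v_1, v_2 \in \mathbb{T}$, I would produce $f$ in two steps. First, choose the midpoint data: let $w$ be a point of $\mathbb{T}$ equidistant (along the circle) from $v_1$ and $v_2$ — concretely, if $v_1 = e^{i\beta_1}$ and $v_2 = e^{i\beta_2}$, one natural choice is to first apply the rotation $z \mapsto \bar v_1 z$ (an lft of the disk), which sends $v_1 \mapsto 1$ and $v_2 \mapsto \bar v_1 v_2 =: e^{i\gamma}$ with $0 < \gamma < 2\pi$. So without loss of generality $v_1 = 1$, $v_2 = e^{i\gamma}$, $\gamma \ne 0$. Now I want a single $\varphi_{a,\lambda}$ that sends $1 \mapsto 1$ and $e^{i\gamma}\mapsto -1$. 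By the rotational symmetry of the picture, the correct $a$ should lie on the bisecting ray: take $a = t\,e^{i\gamma/2}$ for a real parameter $t \in (-1,1)$ to be chosen, and then pick $\lambda \in \mathbb{T}$ to normalize $\varphi_{a,\lambda}(1) = 1$. The remaining condition $\varphi_{a,\lambda}(e^{i\gamma}) = -1$ becomes, after the $\lambda$-normalization is substituted in, a single real equation in $t$; by symmetry of the two boundary points about the axis through $a$, the images $\varphi(1)$ and $\varphi(e^{i\gamma})$ are reflections of each other across the real axis after the normalization, so they are of the form $e^{\pm i\psi(t)}$ with $\psi(t)$ depending continuously on $t$, $\psi(0)$ equal to half the angular separation of $1$ and $e^{i\gamma}$ under the base map (which is strictly between $0$ and $\pi$), and $\psi(t) \to \pi$ as $t$ runs to the endpoint of $(-1,1)$ nearer to the arc's "far side" — because pushing the center $a$ toward the boundary spreads those two points apart toward antipodal position. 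An intermediate value argument then gives a $t$ with $\psi(t) = \pi/2$, i.e. $\varphi_{a,\lambda}(1) = i$ wait—

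Let me restate the last step more cleanly: after arranging $v_1 = 1$, $v_2 = e^{i\gamma}$, apply $\varphi_{a,\lambda}$ with $a$ on the ray through $e^{i\gamma/2}$ and $\lambda$ chosen so that $\varphi_{a,\lambda}(e^{i\gamma/2}) = -1$ (i.e. the bisector point goes to $-1$); then by the reflection symmetry across the line $\mathbb{R}e^{i\gamma/2}$, which $\varphi_{a,\lambda}$ respects, the two points $1$ and $e^{i\gamma}$ go to a conjugate-symmetric-about-$(-1)$ pair $-e^{\pm i\psi}$, and I tune the real parameter $t = |a|$ so that $\psi = \pi/2$, making those images exactly $\pm 1$ — wait, $-e^{\pm i\pi/2} = \mp i$, not $\pm1$; so instead I should arrange the bisector point to map to $i$, then the symmetric pair maps to $i e^{\pm i\psi}$ and I need $\psi = \pi/2$ so they land at $ie^{i\pi/2} = -1$ and $ie^{-i\pi/2} = 1$. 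In any case the structural point is: one free real parameter, one real equation, solved by continuity/IVT as the center slides out to the boundary forcing the image pair from "too close together" at $t=0$ to "antipodal" in the limit. Finally compose the rotation with this $\varphi_{a,\lambda}$; the composition of two lft's of the disk onto the disk is again such a lft, and it sends $v_1 \mapsto 1$, $v_2 \mapsto -1$ (possibly after one harmless further rotation by $\pi$ to fix orientation of which point goes to $+1$ versus $-1$).

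The main obstacle — really the only subtle point — is verifying the monotonicity/limit behavior of the image separation angle $\psi(t)$ as $|a| \to 1$, so that the intermediate value theorem applies and produces the needed parameter. This is a routine but not entirely one-line computation with $\varphi_{a,\lambda}$ restricted to $\mathbb{T}$; alternatively one can bypass it entirely by a slicker route: map $\overline{D}(0;1)$ to the closed upper half-plane $\overline{\mathbb{H}}$ by a Cayley transform (an lft), note that three boundary points can be sent to $0,1,\infty$ by an lft of $\overline{\mathbb{H}}$, and hence any two boundary points of the half-plane can be sent to any prescribed two, then map back. I would present whichever is cleaner; both stay strictly within "properties of linear fractional transformations," which the paper already admits as an allowed tool.
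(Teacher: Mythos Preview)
Your Cayley-transform alternative is correct and clean: conjugate to the closed upper half-plane, use that the real M\"obius group acts transitively on ordered pairs of distinct boundary points, and conjugate back. The paper does something different and more explicit. After the same preliminary rotation to $v_1=1$, $v_2=v\in\mathbb T\setminus\{1,-1\}$, it simply writes down the lft via the cross ratio, choosing as the third correspondence $-1\mapsto\bar v$ so that the triples $(1,v,-1)$ and $(1,-1,\bar v)$ on $\mathbb T$ have the same cyclic orientation; solving the cross-ratio identity then yields an explicit formula for $f$, with no limiting or existence argument at all. Your route buys a formula-free proof; the paper's buys the actual map in closed form, which it then matches against the standard parametrization $f_\alpha(z)=\dfrac{z-\alpha}{\bar\alpha z-1}\cdot\dfrac{\bar\alpha-1}{1-\alpha}$.

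Your primary IVT route, however, contains a wrong claim. With $a=te^{i\gamma/2}$ one computes
\[
\frac{\varphi_a(1)}{\varphi_a(e^{i\gamma})}=e^{-i\gamma}\left(\frac{1-te^{i\gamma/2}}{1-te^{-i\gamma/2}}\right)^{2},
\]
and as $t\to\pm 1$ this ratio tends to $1$, not $-1$: the two image points \emph{coalesce} rather than spreading to antipodal position. So ``pushing the center toward the boundary spreads those two points apart toward antipodal position'' is exactly backwards. The approach is still salvageable---the continuous phase of that ratio sweeps monotonically from $0$ to $-2\pi$ as $t$ traverses $(-1,1)$ and hence crosses $-\pi$ somewhere---but not for the reason you gave. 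Since you yourself flagged this step as the one needing verification and then offered the Cayley bypass, the overall plan survives; just drop the incorrect heuristic for the endpoint behaviour.
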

\medskip{}

\begin{proof}
If $v_{1}$ and $v_{2}$ are at opposite ends of a diameter line of
$\mathbb{T}$, then a simple rotation will yield the required lft.
If not, then a rotation will yield a lft taking $v_{1}$ to 1. Thus,
we may assume $v_{1}=1$ and that $v_{2}\in\mathbb{T}\sm\left\{ 1,-1\right\} $.
Henceforth, we label $v_{2}$ as $v$.

Any three distinct non-collinear points in $\mathbb{C}$ uniquely
determines the circle passing through them. Let $\left\{ z_{1},z_{2},z_{3}\right\} $
and $\left\{ w_{1},w_{2},w_{3}\right\} $ be two sets of distinct
non-collinear points on the unit circle, $\mathbb{T}$, such that
the motions along $\mathbb{T}$ from $z_{1}$ to $z_{2}$ to $z_{3}$
(resp., $w_{1}$ to $w_{2}$ to $w_{3}$) are counterclockwise. By
solving for w in terms of z, the cross ratio

\[
\frac{w-w_{1}}{w_{1}-w_{2}}\frac{w_{2}-w_{3}}{w_{3}-w}=\frac{z-z_{1}}{z_{1}-z_{2}}\frac{z_{2}-z_{3}}{z_{3}-z}
\]
uniquely determines the lft, $f$, that is a self-homeomorhism of
$\mathbb{T}$ and $\overline{D}(0;1)$, is biholomorphic on $D(0;1)$,
and maps $z_{i}$ to $w_{i}$, $i=1,2,3$.

We choose $\left\{ z_{1},z_{2},z_{3}\right\} =\left\{ 1,v,-1\right\} $
and $\left\{ w_{1},w_{2},w_{3}\right\} =\left\{ 1,-1,\overline{v}\right\} $.
The cross ratio yields 
\[
w=f_{\text{cc}}(z)=\frac{z(1-\overline{v}d)+(1+\overline{v}d)}{z(1-d)+(1+d)},\textrm{ where }d=\frac{2(1+v)}{v-\overline{v}}.
\]

Since it is known that the most general homeomorphism from $\overline{D}(0;1)$
onto $\overline{D}(0;1)$ and from $\mathbb{T}$ onto $\mathbb{T}$
that is conformal from $D(0;1)$ onto $D(0,1)$ and maps 1 to 1 has
the form 
\[
f_{\alpha}(z)=\frac{z-\alpha}{\overline{\alpha}z-1}\frac{\overline{\alpha}-1}{1-\alpha},\textrm{ for }|\alpha|<1,
\]
we must have $f=f_{\alpha}$ for some $\alpha$ with $|\alpha|<1$.
Hence, $f_{\alpha}(\alpha)=0=f(\alpha)$, from which it follows that
$\alpha=-\frac{1+\overline{v}d}{1-\overline{v}d}$. 
\end{proof}
\medskip{}

The following facts about linear fractional transformations are basic
and well known.

\medskip{}

\begin{lem}
For a lft $f$ the following conditions hold:
\begin{lyxlist}{00.00.0000}
\item [{\emph{(3.2.1)}}] The most general lft of the closed unit disk $\overline{D}(0;1)$
onto itself is $f(z)=c\frac{z-\alpha}{\overline{\alpha}z-1}$, where
$\alpha$ and $c$ are complex, $\big|c\big|=1$, $\big|\alpha\big|<1$,
and $z\in\overline{D}(0;1)$ .
\item [{\emph{(3.2.2)}}] $f(1)=1$ iff $c=-\frac{1\text{ - }\overline{\alpha}}{1\text{ - }\alpha}$
and $f(-1)=-1$ iff $c=-\frac{1\text{ + }\overline{\alpha}}{1\text{ + }\alpha}$.
\item [{\emph{(3.2.3)}}] $f(1)=1$ and $f(-1)=-1$ iff $c=-\frac{1\text{ - }\overline{\alpha}}{1\text{ - }\alpha}=-\frac{1\text{ + }\overline{\alpha}}{1\text{ + }\alpha}$
iff $\alpha$ is real.
\item [{\emph{(3.2.4)}}] $u\neq1$, $u\neq-1$, $\big|u\big|=1$, $f(u)=u$,
$f(1)=1$ and $f(-1)=-1$ iff $\alpha=0$ iff $f(z)=z$.
\item [{\emph{(3.2.5)}}] The most general lft of the disk $\bar{D}\left(\frac{1}{2};\frac{1}{2}\right)$
onto itself that maps $0$ to $0$ and $1$ to $1$ is $g\left(z\right)=\frac{\beta z}{\left(1-\beta\right)-\left(1-2\beta\right)z}$,
where $0<\beta<1$ (so $0<1-\beta<1$ also).
\item [{\emph{(3.2.6)}}] For every $w\in D\left(\frac{1}{2};\frac{1}{2}\right)$,
$g\left(w\right)\rightarrow0$ as $\beta\searrow0$ and $g\left(w\right)\rightarrow1$
as $\beta\nearrow1$. These two limits are easy to verify.
\end{lyxlist}
\end{lem}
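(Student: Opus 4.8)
The plan is to obtain (3.2.1) from the Schwarz lemma, read off (3.2.2)--(3.2.4) by direct substitution into the resulting formula, reduce (3.2.5) to (3.2.3) by an affine change of variable, and verify (3.2.6) by inspection of the formula produced in (3.2.5).

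First I would prove (3.2.1). A linear fractional transformation that carries $\overline{D}(0;1)$ bijectively onto itself has no pole in $\overline{D}(0;1)$, hence is holomorphic there and restricts to a holomorphic automorphism of the open disk $D(0;1)$; being a self-homeomorphism of the closed disk it also maps the circle $\partial\overline{D}(0;1)$ onto itself. Set $\alpha=f^{-1}(0)$, so $|\alpha|<1$, and let $b_{\alpha}(z)=\frac{z-\alpha}{1-\overline{\alpha}z}$ be the corresponding Blaschke factor, an automorphism of $D(0;1)$ with $b_{\alpha}(\alpha)=0$. Then $f\circ b_{\alpha}^{-1}$ is an automorphism of $D(0;1)$ fixing $0$, so applying the Schwarz lemma to $f\circ b_{\alpha}^{-1}$ and to its inverse forces $f\circ b_{\alpha}^{-1}$ to be a rotation $z\mapsto\lambda z$ with $|\lambda|=1$. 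Hence $f(z)=\lambda b_{\alpha}(z)=\frac{\lambda(z-\alpha)}{1-\overline{\alpha}z}=c\,\frac{z-\alpha}{\overline{\alpha}z-1}$ with $c=-\lambda$, $|c|=1$; conversely every map of this form is readily checked to be a self-homeomorphism of $\overline{D}(0;1)$ (it has its pole at $1/\overline{\alpha}$ outside the closed disk, has modulus $1$ on $|z|=1$, and sends $\alpha$ to $0$), which gives the claim.

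Parts (3.2.2)--(3.2.4) are then pure algebra. Putting $z=1$ in (3.2.1) gives $f(1)=-c\,\frac{1-\alpha}{1-\overline{\alpha}}$, so $f(1)=1$ iff $c=-\frac{1-\overline{\alpha}}{1-\alpha}$; putting $z=-1$ gives $f(-1)=c\,\frac{1+\alpha}{1+\overline{\alpha}}$, so $f(-1)=-1$ iff $c=-\frac{1+\overline{\alpha}}{1+\alpha}$, which is (3.2.2). For (3.2.3), the two conditions hold simultaneously iff the two expressions for $c$ coincide; cross-multiplying $(1-\overline{\alpha})(1+\alpha)=(1+\overline{\alpha})(1-\alpha)$ and cancelling yields $\alpha=\overline{\alpha}$, and in that case $c=-1$. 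For (3.2.4), with $\alpha$ real and $c=-1$ one has $f(z)=\frac{z-\alpha}{1-\alpha z}$; imposing $f(u)=u$ for some $|u|=1$ with $u\neq\pm1$ gives $\alpha(u^{2}-1)=0$, hence $\alpha=0$ and $f=\mathrm{id}$, while the reverse implications are immediate.

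For (3.2.5), the affine map $\phi(z)=2z-1$ is a linear fractional transformation taking $\overline{D}(\frac12;\frac12)$ onto $\overline{D}(0;1)$ (since $|2z-1|=2|z-\frac12|$), with $\phi(0)=-1$, $\phi(1)=1$ and inverse $\phi^{-1}(w)=\frac{w+1}{2}$; thus $g\mapsto\phi\circ g\circ\phi^{-1}$ is a bijection from the self-maps of $\overline{D}(\frac12;\frac12)$ fixing $0$ and $1$ onto the self-maps of $\overline{D}(0;1)$ fixing $-1$ and $1$, which by (3.2.3) have the form $w\mapsto\frac{w-\alpha}{1-\alpha w}$ with $\alpha$ real and $|\alpha|<1$. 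Substituting $\alpha=1-2\beta$ and conjugating back, the arithmetic collapses to $g(z)=\frac{\beta z}{(1-\beta)-(1-2\beta)z}$, and $\alpha\in(-1,1)$ corresponds exactly to $\beta\in(0,1)$. Finally (3.2.6) is immediate from this formula: for $w\in D(\frac12;\frac12)$ one has $w\neq0$ and $w\neq1$, so as $\beta\searrow0$ the numerator tends to $0$ while the denominator tends to $1-w\neq0$, giving $g(w)\to0$, and as $\beta\nearrow1$ both numerator and denominator tend to $w\neq0$, giving $g(w)\to1$. The only non-elementary ingredient anywhere in the lemma is the Schwarz lemma used for (3.2.1); the one place needing a little care is picking the substitution $\alpha=1-2\beta$ so that the conjugated map matches the stated normalization of (3.2.5) exactly.
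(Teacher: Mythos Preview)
Your argument is correct in every part, and the computations check out: the Schwarz-lemma derivation of (3.2.1), the substitutions for (3.2.2)--(3.2.4), the conjugation by $\phi(z)=2z-1$ together with the substitution $\alpha=1-2\beta$ for (3.2.5), and the limits in (3.2.6) are all clean and accurate.

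The paper itself offers no proof of this lemma at all: it is introduced with the sentence ``The following facts about linear fractional transformations are basic and well known,'' and (3.2.6) is dispatched in the statement itself with ``These two limits are easy to verify.'' So there is nothing to compare against; you have simply filled in what the author left as standard background. Your choice to route (3.2.5) through (3.2.3) via the affine conjugation is the natural one and yields exactly the normalization the paper uses.
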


\section{Established results}

The following established results are central to the main theorems
in this paper.

Before proceeding, recall that a \emph{Jordan curve} is the image,
$\Gamma$($\mathbb{T}$), of a homeomorphism, $\Gamma$: $\mathbb{T}\mapsto\mathbb{C}$.
where $\mathbb{T}$ is the unit circle. The inverse, $\Gamma^{\text{-1}}$:
$\Gamma$($\mathbb{T}$)$\mapsto\mathbb{T}$, is also a homeomorphism.
We use the notation $\int(\Gamma(\mathbb{T}))$ to denote the bounded
component of $\mathbb{C}\sm(\Gamma(\mathbb{T}))$ and $\mathrm{ext}(\Gamma(\mathbb{T}))$
to denote the unbounded component of $\mathbb{C}\sm(\Gamma(\mathbb{T}))$.

We now state Caratheodory's Extension theorem.

\medskip{}

\begin{thm}
\emph{(\cite{Caratheodory}, Caratheodory's Extension Theorem for
Jordan domains)}

If $U\neq\bbC$ is a simply-connected, open, and connected subset
of the complex plane and if $\partial U$ is a Jordan curve, $\Gamma$($\mathbb{T}$),
then the Riemann map (i.e., bijective, holomorphic, and with holomorphic
inverse), $f:U\mapsto D(0;1)$, from $U$ onto $D(0;1)$ has an extension,
f$_{\text{ext}}$, satisfying the following:
\begin{lyxlist}{00.00.0000}
\item [{\emph{(1)}}] $f_{\text{ext}}:\overline{U}(=U\cup\Gamma(\mathbb{T}))\mapsto\overline{D}(0;1)$
( = $D(0;1)\cup\mathbb{T}$) is a homeomorphism onto;
\item [{\emph{(2)}}] $f_{\text{ext}}:\Gamma(\mathbb{T})\mapsto\mathbb{T}$
is a homeomorphism onto;
\item [{\emph{(3)}}] $f_{\text{ext}}=f$ on $U$ and $f_{\text{ext}}$
is conformal from $U$ onto $D(0;1)$.
\end{lyxlist}
\end{thm}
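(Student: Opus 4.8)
The plan is to follow the classical boundary-correspondence argument, which proceeds in three stages: first a compactness/metric reduction that isolates the essential analytic estimate; then a length-area (Koebe) estimate showing that the Riemann map carries small circular crosscuts to crosscuts of small diameter; and finally a point-set topology step that upgrades the resulting continuous boundary map to a homeomorphism. Throughout I would work with $g := f^{-1}: D(0;1) \to U$, the inverse Riemann map, since it is cleaner to control the behaviour of $g$ near a boundary point $\zeta \in \mathbb{T}$ than that of $f$ near $\partial U$.

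First I would establish that $g$ extends continuously to $\overline{D}(0;1)$. Fix $\zeta \in \mathbb{T}$ and, for small $\rho > 0$, let $\gamma_\rho$ be the arc of the circle $C(\zeta;\rho)$ lying inside $D(0;1)$; its $g$-image $g(\gamma_\rho)$ is a crosscut of $U$. The key quantitative input is the length-area inequality: writing $L(\rho)$ for the length of $g(\gamma_\rho)$, one has $L(\rho)^2 \le 2\pi\rho \int_{\gamma_\rho}\abs{g'}^2\,ds$ by Cauchy--Schwarz, and integrating in $\rho$ over $(\delta, \sqrt{\delta})$ against $d\rho/\rho$ shows the integral of $L(\rho)^2/\rho$ is bounded by $2\pi\,\mathrm{Area}(g(D(0;1)\cap D(\zeta;\sqrt{\delta}))) \le 2\pi\,\mathrm{Area}(U) < \infty$. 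Hence there is a sequence $\rho_n \searrow 0$ with $L(\rho_n) \to 0$, so the crosscuts $g(\gamma_{\rho_n})$ shrink to a single point; because $g$ is a proper map onto $U$, the two endpoints of $\gamma_{\rho_n}$ on $\mathbb{T}$ have $g$-images approaching $\partial U$, and the shrinking crosscuts force $g$ to have a limit along $\zeta$. A standard normal-families / monotonicity argument (the cluster set is connected and degenerate) then gives that $g$ extends continuously to each $\zeta\in\mathbb T$, and a uniform-continuity packing argument makes the extension continuous on all of $\overline D(0;1)$.

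Next I would identify the boundary values with $\partial U = \Gamma(\mathbb{T})$ and prove injectivity of the extended $g$ on $\mathbb{T}$ --- this is where the hypothesis that $\partial U$ is a Jordan curve enters, and I expect it to be the main obstacle. That $g(\mathbb{T}) \subseteq \partial U$ is immediate from properness; surjectivity onto $\partial U$ follows since $g(\overline{D}(0;1))$ is compact, contains $U$, and is contained in $\overline{U}$. For injectivity: if $g(\zeta_1) = g(\zeta_2) = p$ with $\zeta_1 \ne \zeta_2$, consider the crosscut $C = g([\zeta_1,\zeta_2])$ of... rather, the Jordan arc formed by $g$ of a chord; it is a Jordan curve through $p$ dividing $U$ into two pieces, each of which must (by the open mapping theorem and connectivity of $D(0;1)$ minus the chord) contain boundary points of $U$ other than $p$, contradicting the fact that $\partial U$, being a Jordan curve, is not disconnected by the single point $p$. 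Making this precise requires the Jordan curve theorem for $\Gamma(\mathbb{T})$ together with the observation that removing one point from a Jordan curve leaves it connected.

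Finally, with $g:\overline{D}(0;1)\to\overline{U}$ a continuous bijection from a compact space to a Hausdorff space, it is automatically a homeomorphism; setting $f_{\text{ext}} := g^{-1}$ gives conclusions (1) and (2), and (3) holds because $f_{\text{ext}} = f$ on the open set $U$ by construction and $f$ is conformal there by the Riemann mapping theorem. I would remark that all tools invoked --- Cauchy--Schwarz, the area formula for conformal maps, normal families, the Jordan curve theorem, and the Riemann mapping theorem --- are part of classical complex analysis, consistent with the paper's stated ground rules; no potential theory or analytic capacity is needed.
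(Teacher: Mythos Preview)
Your sketch is the standard Koebe--Carath\'eodory boundary-correspondence proof and is essentially correct in outline; however, the paper does not supply a proof of this theorem at all. Theorem~4.1 is listed under ``Established results'' and is simply quoted from the literature (with a citation to Carath\'eodory's original paper), so there is no in-paper argument to compare against. Your proposal therefore goes well beyond what the paper does, which is fine, but there is nothing to match it to.

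As a brief assessment of the proposal on its own merits: the three-stage plan (length--area estimate to get continuous extension of $g=f^{-1}$, Jordan-curve hypothesis to force injectivity on $\mathbb{T}$, compact-Hausdorff to upgrade to a homeomorphism) is exactly the classical route. The one place that would need more care in a full write-up is the passage from ``there exist radii $\rho_n\searrow 0$ with $L(\rho_n)\to 0$'' to ``$g$ has a single limit at $\zeta$'': one must argue that the nested crosscuts $g(\gamma_{\rho_n})$ cut off nested subdomains of $U$ whose diameters shrink (this uses that the endpoints of $g(\gamma_{\rho_n})$ lie on the Jordan curve $\partial U$ and that a Jordan curve is locally connected), rather than merely that the crosscuts themselves are short. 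Your phrase ``the cluster set is connected and degenerate'' gestures at this, but the Jordan hypothesis is already doing work at this stage, not only at the injectivity step. Otherwise the argument is sound and, as you note, stays within classical complex analysis.
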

\medskip{}

\begin{rem*}
Notice that $(f_{\text{ext}})^{\text{-1}}:\mathbb{T}\mapsto\Gamma(\mathbb{T})$
is a homeomorphism onto, but, in general, $(f_{\text{ext}})^{\text{-1}}$
and $\Gamma$ are not the same function. 
\end{rem*}
\medskip{}

\begin{cor}
Let $\Gamma(\mathbb{T})$ be any Jordan curve, and let $z_{0}\in\Gamma(\mathbb{T})$. 

Then there exists a homeomorphism $f:\Gamma(\mathbb{T})\cup\mathrm{int}(\Gamma(\mathbb{T}))\mapsto\overline{D}(0;1)$
such that: 
\begin{lyxlist}{00.00.0000}
\item [{\emph{(1)}}] $f$ restricted to $\Gamma(\mathbb{T})$ is a homeomorphism
onto $\mathbb{T}$
\item [{\emph{(2)}}] $f$ restricted to $\int(\Gamma(\mathbb{T}))$ is
a conformal map onto $D(0;1)$
\item [{\emph{(3)}}] $f(z_{0})=1$.
\end{lyxlist}
\end{cor}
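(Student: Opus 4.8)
The plan is to reduce everything to Caratheodory's Extension Theorem (Theorem 4.1) applied to $U:=\mathrm{int}(\Gamma(\mathbb{T}))$, and then to correct the image of $z_{0}$ by composing with a rotation.

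First I would check that $U:=\mathrm{int}(\Gamma(\mathbb{T}))$ satisfies the hypotheses of Theorem 4.1. By the Jordan curve theorem, $U$ is a bounded (so $U\neq\mathbb{C}$), open, connected, simply-connected set, and its topological boundary is exactly $\Gamma(\mathbb{T})$. Hence Theorem 4.1 applies: the Riemann map $g:U\mapsto D(0;1)$ has an extension $g_{\text{ext}}:\overline{U}\,(=\Gamma(\mathbb{T})\cup U)\mapsto\overline{D}(0;1)$ which is a homeomorphism onto, which restricts to a homeomorphism of $\Gamma(\mathbb{T})$ onto $\mathbb{T}$, and which is conformal from $U$ onto $D(0;1)$. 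Thus $g_{\text{ext}}$ already satisfies conditions (1) and (2) of the Corollary, but in general $g_{\text{ext}}(z_{0})$ is merely some point $v\in\mathbb{T}$, not necessarily $1$.

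Next, with $v:=g_{\text{ext}}(z_{0})\in\mathbb{T}$, I would introduce the rotation $r(w):=\overline{v}\,w$. Since $\big|v\big|=1$, this is a linear fractional transformation of $\overline{D}(0;1)$ onto itself (the case $\alpha=0$, $c=\overline{v}$ of Lemma 3.2, (3.2.1)); it is a homeomorphism of $\overline{D}(0;1)$ onto itself, a homeomorphism of $\mathbb{T}$ onto itself, it is conformal on $D(0;1)$, and $r(v)=1$. I then set $f:=r\circ g_{\text{ext}}$. As a composition of homeomorphisms onto, $f$ is a homeomorphism of $\Gamma(\mathbb{T})\cup\mathrm{int}(\Gamma(\mathbb{T}))$ onto $\overline{D}(0;1)$; it carries $\Gamma(\mathbb{T})$ homeomorphically onto $\mathbb{T}$ (giving (1)); as a composition of conformal maps on $\mathrm{int}(\Gamma(\mathbb{T}))$ it is conformal onto $D(0;1)$ (giving (2)); and $f(z_{0})=r(v)=1$ (giving (3)).

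There is no serious obstacle here; the statement is a direct consequence of Theorem 4.1 plus a rotation. The only points worth stating explicitly are the topological inputs from the Jordan curve theorem — namely that $\mathrm{int}(\Gamma(\mathbb{T}))$ is simply connected and has $\Gamma(\mathbb{T})$ as its full topological boundary, so that Theorem 4.1 is applicable — together with the elementary remark that rotations of the disk are linear fractional transformations of the form covered by Lemma 3.2.
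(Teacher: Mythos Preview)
Your proof is correct and follows essentially the same approach as the paper: apply Theorem 4.1 to $U=\mathrm{int}(\Gamma(\mathbb{T}))$ to obtain the extended Riemann map, then post-compose with the rotation $w\mapsto \overline{v}\,w$ (the paper writes this as multiplication by $u$ with $|u|=1$ and $u\cdot f(z_{0})=1$). Your version is a bit more explicit about verifying the Jordan-curve-theorem hypotheses and identifying the rotation as an instance of Lemma 3.2, but the argument is the same.
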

\medskip{}

\begin{proof}
By Theorem 4.1, $f(z_{0})\in\mathbb{T}.$ There is a $u$ with $|u|=1$
such that $u(f(z_{0}))=1$. Thus, $F:=uf$ is the required function. 
\end{proof}
\medskip{}

\begin{thm}
\emph{(A more general version of Caratheodory's theorem)}

Let $g:D(0;1)\mapsto U$ be the inverse of the Riemann map and U is
open, connected and simply connected.

Then:

$g$ extends continuously to $G:\overline{D}(0;1))\mapsto\overline{U})$
if and only if the boundary of $U$ is locally connected.
\end{thm}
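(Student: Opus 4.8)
The plan is to prove the two implications separately; the only analytic ingredient is a classical length--area estimate for $g$, everything else being plane topology.

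\emph{Necessity.} Suppose $g$ extends to a continuous map $G:\overline{D}(0;1)\to\overline{U}$. Since $g$ is a homeomorphism of $D(0;1)$ onto $U$ it is proper, so $g^{-1}(\Delta)$ is a compact subset of $D(0;1)$ for every compact $\Delta\subset U$; combining this with the continuity of $G$ shows $G(\mathbb{T})\cap U=\emptyset$ (if $G(\zeta_0)\in U$ for some $\zeta_0\in\mathbb{T}$, pick $\overline{D}(G(\zeta_0);s)\subset U$, note $g^{-1}(\overline{D}(G(\zeta_0);s))$ lies in some $\{|z|\le\rho\}$ with $\rho<1$, and get a contradiction from $G(t\zeta_0)=g(t\zeta_0)$ for $t\nearrow 1$). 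Meanwhile compactness of $\overline{D}(0;1)$ forces $G(\overline{D}(0;1))=\overline{U}$, so $\partial U=\overline{U}\setminus U=G(\mathbb{T})$. Hence $\partial U$ is a continuous image, inside a metric space, of the compact connected locally connected space $\mathbb{T}$, and therefore is itself locally connected (a standard fact about Peano continua, in the circle of the Hahn--Mazurkiewicz theorem). (When $U$ is unbounded one reads $\overline{U}$ and ``locally connected'' in the sphere $\overline{\mathbb{C}}$; in every application in this paper $U$ is bounded, and from now on we assume $U\subset D(0;R)$ for some $R$.)

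\emph{Sufficiency.} Assume $\partial U$ is locally connected. Since $D(0;1)$ is dense in its completion $\overline{D}(0;1)$, it suffices to show $g$ is uniformly continuous on $D(0;1)$. Fix $\zeta\in\mathbb{T}$; for $0<\delta<1$ let $C_{\delta}(\zeta)=D(0;1)\cap C(\zeta;\delta)$, an arc which is a crosscut of the disk, and write $\ell(\delta)$ for the length of $g(C_{\delta}(\zeta))$. Parametrising $C_{\delta}(\zeta)$ by angle, the Cauchy--Schwarz inequality together with a change to polar coordinates centred at $\zeta$ yields
\[
\int_{0}^{1/2}\frac{\ell(\delta)^{2}}{\delta}\,d\delta\ \le\ 2\pi\iint_{D(0;1)}\big|g'(z)\big|^{2}\,dx\,dy\ =\ 2\pi\,\mathrm{Area}(U)\ <\ \infty .
\]
Because $\int_{0}^{1/2}d\delta/\delta=\infty$, this forces $\liminf_{\delta\to0}\ell(\delta)=0$; choose $\delta_{n}\downarrow0$ with $\ell(\delta_{n})\to0$, so each $\gamma_{n}:=g(C_{\delta_{n}}(\zeta))$ is a rectifiable Jordan arc with $\mathrm{diam}(\gamma_{n})\le\ell(\delta_{n})\to0$. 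Put $W_{n}=D(0;1)\cap D(\zeta;\delta_{n})$, the region that $C_{\delta_{n}}(\zeta)$ cuts off from $D(0;1)$ on the side of $\zeta$, and $V_{n}=g(W_{n})$, one of the two components of $U\setminus\gamma_{n}$. Using properness of $g$ once more, the two endpoints of $\gamma_{n}$ (they exist because $\gamma_{n}$ is rectifiable) lie on $\partial U$, at distance $\le\ell(\delta_{n})$ from each other.

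The remaining step --- and the one I expect to be the main obstacle --- is to pass from ``$\gamma_{n}$ small'' to ``$V_{n}$ small'', and this is precisely where local connectedness of $\partial U$ is used. A compact connected locally connected metric space is \emph{uniformly} locally connected: for every $\varepsilon>0$ there is $\eta>0$ such that any two of its points at distance $<\eta$ lie together in a connected subset of it of diameter $<\varepsilon$. Once $\ell(\delta_{n})<\eta$, applying this to the two endpoints of $\gamma_{n}$ produces a connected set $A_{n}\subset\partial U$ of diameter $<\varepsilon$ containing them, so that $L_{n}:=\overline{\gamma_{n}}\cup A_{n}$ is a compact connected set of diameter $<3\varepsilon$ containing a loop through those endpoints. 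The crux is then the plane-topology fact that the cut-off region $V_{n}$, being connected and disjoint from $L_{n}$, must lie in a \emph{bounded} complementary component of $L_{n}$ --- whence $\mathrm{diam}(\overline{V_{n}})\le\mathrm{diam}(L_{n})<3\varepsilon$ --- rather than in the unbounded one; establishing this cleanly (via crosscuts, the Jordan curve theorem, connectedness of $\mathbb{C}\setminus U$ and of $\partial U\cap\overline{V_{n}}$, and the observation that $g(0)$ lies on the opposite side of $\gamma_{n}$ from $V_{n}$) is where the real work concentrates, though it stays entirely within the plane topology already used elsewhere in this paper. Granting it, we obtain: for every $\varepsilon>0$ there is $\delta>0$, independent of $\zeta$ (both the area bound and the modulus $\eta$ are independent of $\zeta$), with $\mathrm{diam}\,g\big(D(0;1)\cap D(\zeta;\delta)\big)<3\varepsilon$. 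Combining this boundary oscillation estimate with the uniform continuity of $g$ on the compact set $\{\,|z|\le 1-\delta\,\}$ gives uniform continuity of $g$ on all of $D(0;1)$, hence the desired continuous extension $G:\overline{D}(0;1)\to\overline{U}$.
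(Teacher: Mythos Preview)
The paper does not actually prove this theorem. Theorem~4.3 sits in Section~4 (``Established results'') and is merely stated, alongside Carath\'eodory's extension theorem (Theorem~4.1) and Rudin's product estimate (Lemma~4.4), as a classical fact imported without proof; indeed the paper never appeals to Theorem~4.3 anywhere in its later arguments, only to Theorem~4.1 for Jordan domains. So there is no ``paper's own proof'' to compare against.

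That said, your argument is the standard textbook proof (essentially the one in Pommerenke or in Milnor's appendix on conformal maps): necessity via Hahn--Mazurkiewicz, sufficiency via the Wolff length--area estimate to produce short crosscuts, then uniform local connectedness of $\partial U$ to cap off the image crosscut with a small boundary arc and trap the cut-off region $V_n$. The one place you explicitly flag and then ``grant'' --- that $V_n$ lies in a bounded complementary component of $L_n$ --- is indeed the delicate step, and your sketch of how to handle it (using that $g(0)\notin V_n$, connectedness of $\mathbb{C}\setminus U$, and the Jordan curve theorem) is on the right track but not quite a proof as written. In the usual treatments one argues that $\partial V_n\subset\gamma_n\cup\partial U$, so the piece of $\partial V_n$ on $\partial U$ is a connected set meeting the endpoints of $\gamma_n$, hence (by uniform local connectedness) lies in the small connected boundary set $A_n$; thus $\partial V_n\subset L_n$, and since $V_n$ is connected and disjoint from $L_n$ it sits in a single complementary component of $L_n$, which must be bounded because $\mathrm{diam}(V_n)$ is controlled by $\mathrm{diam}(\partial V_n)$ for a bounded simply connected planar domain. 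If you want the proof to be self-contained rather than a plan, that paragraph needs to be written out; otherwise the overall strategy is correct.
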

\medskip{}

Next, we state a result of Rudin.

\medskip{}

\begin{lem}
\emph{(\cite[Lemma 15.3, p.299]{Rudin})} For every set, $\left\{ u_{1},u_{2},...,u_{j}\right\} \subset\mathbb{C}$,

\begin{eqnarray*}
|(1+u_{1})(1+u_{2})...(1+u_{j})-1| & \leq & (1+|u_{1}|)(1+|u_{2}|)...(1+|u_{j}|)-1\\
 & \leq & \exp(|u_{1}|+|u_{2}|+...+|u_{j}|)-1.
\end{eqnarray*}

\end{lem}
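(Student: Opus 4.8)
The statement to prove is Rudin's lemma on finite complex products: for any $\{u_1,\dots,u_j\}\subset\mathbb{C}$,
\[
\Bigl|\prod_{k=1}^{j}(1+u_k)-1\Bigr|\le \prod_{k=1}^{j}(1+|u_k|)-1\le \exp\Bigl(\sum_{k=1}^{j}|u_k|\Bigr)-1.
\]

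My plan is to prove the first inequality by induction on $j$, and the second by a termwise comparison of two product expansions together with the elementary bound $1+t\le e^{t}$.

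For the first inequality: the case $j=1$ is the equality $|u_1|=|u_1|$. For the inductive step, write $\prod_{k=1}^{j+1}(1+u_k)-1 = (1+u_{j+1})\bigl(\prod_{k=1}^{j}(1+u_k)-1\bigr) + u_{j+1}$. Apply the triangle inequality, then $|1+u_{j+1}|\le 1+|u_{j+1}|$, then the induction hypothesis, to get $\bigl|\prod_{k=1}^{j+1}(1+u_k)-1\bigr|\le (1+|u_{j+1}|)\bigl(\prod_{k=1}^{j}(1+|u_k|)-1\bigr)+|u_{j+1}|$, and observe that the right-hand side equals $\prod_{k=1}^{j+1}(1+|u_k|)-1$ upon expanding. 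For the second inequality: expanding $\prod_{k=1}^{j}(1+|u_k|)$ as a sum over subsets $S\subseteq\{1,\dots,j\}$ of $\prod_{k\in S}|u_k|$, and similarly expanding $\exp\bigl(\sum|u_k|\bigr)=\prod_k e^{|u_k|}=\prod_k\sum_{n\ge 0}|u_k|^n/n!$ as a sum of nonnegative terms that includes all the squarefree monomials $\prod_{k\in S}|u_k|$ with coefficient $1$, the inequality follows term by term; subtracting $1$ (the empty-subset term) from both sides preserves it. Alternatively one just invokes $1+|u_k|\le e^{|u_k|}$ for each $k$ and multiplies these nonnegative inequalities, then subtracts $1$.

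There is no real obstacle here; the only point requiring the tiniest care is making sure the algebraic identity $(1+t)(P-1)+t = (1+t)P - 1$ is used correctly in the inductive step so that the bound telescopes into exactly $\prod(1+|u_k|)-1$, and that all quantities being compared are nonnegative so that subtracting the constant $1$ is legitimate. Since this is a verbatim citation of \cite[Lemma 15.3]{Rudin}, I would in fact simply refer the reader there, but the short inductive argument above is self-contained and uses nothing beyond the triangle inequality and $1+t\le e^t$.
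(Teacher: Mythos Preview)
Your proof is correct. The paper does not actually prove this lemma; it merely states it and cites \cite[Lemma 15.3, p.299]{Rudin}, so there is no in-paper argument to compare against. Your inductive argument for the first inequality and the use of $1+t\le e^{t}$ for the second are exactly the standard proof (and indeed the one Rudin gives), so nothing further is needed.
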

\medskip{}

Next, we state and prove a property of the Euler function that plays
an indispensable role in the proof of the main theorem.

\medskip{}

\begin{thm}
\emph{(Properties of the Euler function)}

For each $0<r<1$, $\prod_{j=1}^{\infty}(1-z^{j})$ converges uniformly
on $\overline{D}(0;r)$ to a function, $f$, continuous on $\overline{D}(0;r)$
and holomorphic on $D(0;r)$, i.e., $f\in A(\overline{D}(0;r))$.\end{thm}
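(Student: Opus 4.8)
The plan is to show that the partial products $P_n(z)=\prod_{j=1}^{n}(1-z^{j})$ form a uniformly Cauchy sequence on $\overline{D}(0;r)$, so that the limit exists and is continuous there; holomorphy of the limit on $D(0;r)$ then follows from the fact that each $P_n$ is a polynomial (hence holomorphic) and a locally uniform limit of holomorphic functions is holomorphic, which puts $f\in A(\overline{D}(0;r))$. The quantitative control on the tail differences will come directly from Lemma~4.4 (Rudin's estimate on finite complex products), applied with the choice $u_j=-z^{j}$.

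The key steps, in order, are as follows. First, fix $0<r<1$ and $z\in\overline{D}(0;r)$, and for $n<m$ write $P_m(z)-P_n(z)=P_n(z)\bigl(\prod_{j=n+1}^{m}(1-z^{j})-1\bigr)$. Since $|z^{j}|\le r^{j}<1$, Rudin's lemma (Lemma~4.4) applied to the finite set $\{-z^{n+1},\dots,-z^{m}\}$ gives
\[
\Bigl|\prod_{j=n+1}^{m}(1-z^{j})-1\Bigr|\le \exp\Bigl(\sum_{j=n+1}^{m}|z|^{j}\Bigr)-1\le \exp\Bigl(\sum_{j=n+1}^{\infty}r^{j}\Bigr)-1=\exp\Bigl(\frac{r^{n+1}}{1-r}\Bigr)-1.
\]
Second, bound the prefactor: again by Rudin's lemma, $|P_n(z)|\le\prod_{j=1}^{\infty}(1+r^{j})\le\exp\bigl(\sum_{j=1}^{\infty}r^{j}\bigr)=\exp\bigl(\frac{r}{1-r}\bigr)=:C_r$, a constant independent of $n$ and $z$. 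Combining, $\|P_m-P_n\|_{\overline{D}(0;r)}\le C_r\bigl(\exp(\frac{r^{n+1}}{1-r})-1\bigr)$, which tends to $0$ as $n\to\infty$ since $r^{n+1}\to0$. Hence $(P_n)$ is uniformly Cauchy on $\overline{D}(0;r)$; by completeness of $C(\overline{D}(0;r))$ in the sup norm it converges uniformly to some continuous $f$. Third, on any slightly smaller disk $\overline{D}(0;r')$ with $r'<r$ the same estimate shows uniform convergence there too, so $f$ is a locally uniform limit on $D(0;r)$ of the holomorphic polynomials $P_n$, whence $f$ is holomorphic on $D(0;r)$; together with continuity on $\overline{D}(0;r)$ this is exactly $f\in A(\overline{D}(0;r))$.

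I do not expect a serious obstacle here: the only slightly delicate point is making sure the tail estimate is genuinely uniform in $z$ — but this is handled cleanly by replacing $|z|$ by $r$ throughout before summing, and by using the uniform bound $C_r$ on $|P_n(z)|$ that likewise does not depend on $z$ or $n$. One should also remark that the same argument shows the convergence is uniform on $\overline{D}(0;r)$ for \emph{every} $r<1$ (not merely local on $D(0;1)$), although the limit need not extend continuously to $\overline{D}(0;1)$ itself, so the statement is correctly phrased for fixed $r<1$.
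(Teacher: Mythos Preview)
Your proposal is correct and follows essentially the same approach as the paper: both factor $P_m-P_n=P_n\bigl(\prod_{j=n+1}^{m}(1-z^j)-1\bigr)$, apply Rudin's Lemma~4.4 to bound the tail factor by $\exp\bigl(\frac{r^{n+1}}{1-r}\bigr)-1$ and the prefactor by $\exp\bigl(\frac{r}{1-r}\bigr)$, and conclude the sequence is uniformly Cauchy. The only cosmetic difference is that the paper invokes completeness of $A(\overline{D}(0;r))$ directly (since each partial product already lies in that closed subalgebra), whereas you argue holomorphy separately via locally uniform convergence; both are fine.
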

\begin{proof}
Since each $\prod_{j=1}^{n}(1-z^{j})\in A(\overline{D}(0;r))$ and
the latter is complete under the sup norm on

$\overline{D}(0;r)$, it suffices to show that $(\prod_{j=1}^{n}(1-z^{j}))_{n}$
is a Cauchy sequence in $A(\overline{D}(0;r))$.

(1) For all $k\geq1$, $n\geq1$, and $z\in\overline{D}(0;r)$, and
using Rudin's Lemma 4.4, 
\begin{eqnarray*}
\prod_{j=1}^{n+k}(1-z^{j})-\prod_{j=1}^{n}(1-z^{j}) & = & \prod_{j=1}^{n}(1-z^{j})\left[\prod_{j=n+1}^{n+k}(1-z^{j})-1\right]
\end{eqnarray*}
and so
\begin{eqnarray*}
\abs{\prod_{j=1}^{n}(1-z^{j})\left[\prod_{j=n+1}^{n+k}(1-z^{j})-1\right]} & = & \prod_{j=1}^{n}|1-z^{j}|\abs{\prod_{j=n+1}^{n+k}(1-z^{j})-1}\\
 & \leq & \prod_{j=1}^{n}|1-z^{j}|\left[\prod_{j=n+1}^{n+k}(1+|z^{j}|)-1\right]\\
 & \leq & \prod_{j=1}^{n}|1-z^{j}|\left[\prod_{j=n+1}^{n+k}(1+r^{j})-1\right]\\
 & \leq & \prod_{j=1}^{n}|1-z^{j}|\left[\exp(\sum_{j=n+1}^{n+k}r^{j})-1\right]\\
 & \leq & \prod_{j=1}^{n}|1-z^{j}|\left[\exp(\sum_{j=n+1}^{\infty}r^{j})-1\right]\\
 & = & \prod_{j=1}^{n}|1-z^{j}|\left[\exp(\frac{r^{n+1}}{1-r})-1\right]
\end{eqnarray*}

(2) For all $n\geq1$ and $z\in\overline{D}(0;r)$, 
\begin{eqnarray*}
|\prod_{j=1}^{n}(1-z^{j})-1| & \leq & \prod_{j=1}^{n}(1+|z^{j}|)-1\\
 & \leq & \prod_{j=1}^{n}(1+r^{j})-1\\
 & \leq & \exp(\sum_{j=1}^{n}r^{j})-1\\
 & \leq & \exp(\sum_{j=1}^{\infty}r^{j})-1\\
 & = & \exp(\frac{r}{1-r})-1
\end{eqnarray*}
Thus, 
\begin{eqnarray*}
|\prod_{j=1}^{n}(1-z^{j})|-1 & \leq & |\prod_{j=1}^{n}(1-z^{j})-1|\\
 & \leq & \exp(\frac{r}{1-r})-1
\end{eqnarray*}
so 
\[
|\prod_{j=1}^{n}(1-z^{j})|\boldsymbol{\leq}\exp(\frac{r}{1-r}).
\]
Putting (1) and (2) together, for all $k\geq1$, $n\geq1$, and $z\in\overline{D}(0;r)$,
\[
|\prod_{j=1}^{n+k}(1-z^{j})-\prod_{j=1}^{n}(1-z^{j})|\boldsymbol{\leq}\exp(\frac{r}{1-r})\left[\exp(\frac{r^{n+1}}{1-r})-1\right]\rightarrow0
\]
as $n\rightarrow\infty$.
\end{proof}
\medskip{}

\begin{thm}
For $z\in D(0;1)$, $\prod_{j=1}^{\infty}(1-z^{j})=\exp(-\sum_{j=1}^{\infty}\frac{1}{j}\frac{z^{j}}{1-z^{j}})$,
and for $0<r<1$, the series $\sum_{j=1}^{\infty}\frac{1}{j}\frac{z^{j}}{1-z^{j}}$
converges uniformly and absolutely on $\overline{D}(0;r)$.
\end{thm}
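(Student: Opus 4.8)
The plan is to take logarithms term by term, expand each $\log(1-z^{j})$ in its power series, justify a rearrangement of the resulting absolutely convergent double series, and then exponentiate, using Theorem 4.5 to pass to the limit in the partial products. I would dispose of the convergence statement first: fix $0<r<1$ and $z\in\overline{D}(0;r)$. Then $|z^{j}|\le r^{j}\le r<1$, so $\big|\tfrac{z^{j}}{1-z^{j}}\big|\le\tfrac{r^{j}}{1-r^{j}}\le\tfrac{r^{j}}{1-r}$ and hence $\big|\tfrac{1}{j}\tfrac{z^{j}}{1-z^{j}}\big|\le\tfrac{1}{1-r}\cdot\tfrac{r^{j}}{j}$. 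Since $\sum_{j\ge1}\tfrac{r^{j}}{j}=-\log(1-r)<\infty$, the Weierstrass $M$-test gives absolute and uniform convergence of $\sum_{j}\tfrac{1}{j}\tfrac{z^{j}}{1-z^{j}}$ on $\overline{D}(0;r)$.

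Now fix $z$ with $|z|<1$ and put $\rho=|z|$. For each $j\ge1$ we have $|z^{j}|<1$, so $1-z^{j}\in\mathbb{C}\sm(-\infty,0]$ and the principal logarithm satisfies $\log(1-z^{j})=-\sum_{k=1}^{\infty}\tfrac{z^{jk}}{k}$. The double series $\sum_{(j,k)\in\mathbb{N}\times\mathbb{N}}\tfrac{z^{jk}}{k}$ is absolutely convergent: summing over $k$ first,
\[
\sum_{j=1}^{\infty}\sum_{k=1}^{\infty}\frac{\rho^{jk}}{k}=\sum_{k=1}^{\infty}\frac{1}{k}\,\frac{\rho^{k}}{1-\rho^{k}}\le\frac{1}{1-\rho}\sum_{k=1}^{\infty}\frac{\rho^{k}}{k}=\frac{-\log(1-\rho)}{1-\rho}<\infty,
\]
where the middle inequality uses $1-\rho^{k}\ge1-\rho$. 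Consequently the iterated sum $\sum_{j}\log(1-z^{j})=-\sum_{j}\sum_{k}\tfrac{z^{jk}}{k}$ converges, and by Fubini's theorem for absolutely convergent double series it may be reindexed by the bijection $(j,k)\mapsto(k,j)$ of $\mathbb{N}\times\mathbb{N}$, giving
\[
\sum_{j=1}^{\infty}\log(1-z^{j})=-\sum_{(j,k)}\frac{z^{jk}}{k}=-\sum_{(j,k)}\frac{z^{jk}}{j}=-\sum_{j=1}^{\infty}\frac{1}{j}\sum_{k=1}^{\infty}z^{jk}=-\sum_{j=1}^{\infty}\frac{1}{j}\,\frac{z^{j}}{1-z^{j}}.
\]

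Finally, for each $n$, using that $\exp$ is multiplicative and $\exp(\log w)=w$ for $w\in\mathbb{C}\sm(-\infty,0]$, we get $\prod_{j=1}^{n}(1-z^{j})=\exp\!\big(\sum_{j=1}^{n}\log(1-z^{j})\big)$. Let $n\to\infty$: the right side tends to $\exp\!\big(\sum_{j=1}^{\infty}\log(1-z^{j})\big)$ by continuity of $\exp$ together with the convergence of $\sum_{j}\log(1-z^{j})$ just established, while the left side tends to $\prod_{j=1}^{\infty}(1-z^{j})$ by Theorem 4.5. Combining this with the displayed identity yields $\prod_{j=1}^{\infty}(1-z^{j})=\exp\!\big(-\sum_{j=1}^{\infty}\tfrac{1}{j}\tfrac{z^{j}}{1-z^{j}}\big)$. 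I expect the only genuine subtlety to be the justification of the index swap $\sum\tfrac{z^{jk}}{k}=\sum\tfrac{z^{jk}}{j}$: one must first secure absolute convergence of $\sum_{j,k}\tfrac{\rho^{jk}}{k}$ (via $1-\rho^{k}\ge1-\rho$), after which Fubini for series makes the symmetry rigorous; everything else is routine bookkeeping with geometric series and the $M$-test.
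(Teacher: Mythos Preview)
Your argument is correct. Both your proof and the paper's hinge on the same identity---expanding each $\log(1-z^{j})$ as $-\sum_{k}z^{jk}/k$ and then interchanging the roles of $j$ and $k$---but the execution differs. You pass directly to the full double series $\sum_{j,k}z^{jk}/k$, establish absolute convergence from the bound $1-\rho^{k}\ge 1-\rho$, and invoke Fubini together with the bijection $(j,k)\mapsto(k,j)$ to obtain the Lambert series in one stroke. The paper instead keeps the outer sum finite, computing $\log\prod_{k=1}^{n}(1-z^{k})=-\sum_{j\ge1}\tfrac{1}{j}\tfrac{z^{j}(1-z^{jn})}{1-z^{j}}$ explicitly and then proving that this converges uniformly on $\overline{D}(0;r)$ to $-\sum_{j}\tfrac{1}{j}\tfrac{z^{j}}{1-z^{j}}$ as $n\to\infty$; only then does it exponentiate and appeal to Theorem~4.5. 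Your route is shorter and conceptually cleaner, at the price of citing Fubini for absolutely convergent double series as a black box; the paper's route is more laborious but entirely self-contained, never leaving the realm of explicit uniform estimates. For the auxiliary convergence claim you use the $M$-test with the comparison $\tfrac{r^{j}}{j(1-r)}$, whereas the paper uses the ratio test; both are fine. One cosmetic slip: where you write ``summing over $k$ first'' you in fact sum over $j$ first (harmless for nonnegative terms).
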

\medskip{}

\begin{proof}
For the latter assertion, let $z\in\overline{D}(0;r)$, so that $|z|\boldsymbol{\leq}r$.
By the ratio test, 
\begin{eqnarray*}
\frac{|a_{j+1}|}{|a_{j}|} & = & \frac{|\frac{1}{j+1}\text{ }\frac{z^{j+1}}{1-z^{j+1}}|}{|\frac{1}{j}\text{ }\frac{z^{j}}{1-z^{j}}|}\\
 & = & \frac{j}{j+1}\frac{|1-z^{j}|}{|1-z^{j+1}|}\abs z\\
 & \leq & \frac{j}{j+1}\frac{|1-z^{j}|}{|1-z^{j+1}|}r\\
 & \rightarrow & r<1\textrm{ as }j\rightarrow\infty
\end{eqnarray*}

Next, $\log(1-z)=-\sum_{j=1}^{\infty}\frac{z^{j}}{j}$ for $z\in D(0;1)$,
and the series converges uniformly and absolutely on $\overline{D}(0;r)$.
For all $k\geq1$, $z\in D(0;1)$ implies $z^{k}\in D(0;1)$, and
$z\in\overline{D}(0;r)$ implies $z^{k}\in\overline{D}(0;r)$, we
have $\log(1-z^{k})=-\sum_{j=1}^{\infty}\frac{(z^{k}\text{)}^{j}}{j}$
for $z\in D(0;1)$, and the series converges uniformly and absolutely
on $\overline{D}(0;r)$.

Note that for all $z\in D(0;1)$ and $k\geq1$, $1-z^{k}$ lies in
the positive right half-plane.

Therefore, using the fact that $\sum_{k=1}^{\infty}z^{k}=\frac{z}{1-z}$
we obtain 
\begin{eqnarray*}
\log(\prod_{k=1}^{n}(1-z^{k})) & = & \sum_{k=1}^{n}\log(1-z^{k})\\
 & = & \sum_{k=1}^{n}\left(-\sum_{j=1}^{\infty}\frac{(z^{k}\text{)}^{j}}{j}\right)\\
 & = & -(z+\frac{1}{2}z^{2}+\cdots+\frac{1}{j}z^{j}+\cdots)\\
 &  & -(z^{2}+\frac{1}{2}(z^{2})^{2}+\cdots+\frac{1}{j}(z^{2})^{j}+\cdots)\\
 &  & \cdots\\
 &  & -(z^{n}+\frac{1}{2}(z^{n})^{2}+\cdots+\frac{1}{j}(z^{n})^{j}+\cdots)\\
 & = & -(z^{1}+z^{2}+\cdots+z^{n})\\
 &  & -\frac{1}{2}((z^{1\text{ }})^{2}+(z^{2})^{2}+\cdots+(z^{n})^{2})\\
 &  & \cdots\\
 &  & -\frac{1}{j}((z^{1\text{ }})^{j}+(z^{2})^{j}+\cdots+(z^{n})^{j})\\
 &  & \cdots
\end{eqnarray*}
Using the fact that for $\abs z<1$ and $j\geq1$, $\sum_{k=1}^{n}\left(z^{j}\right)^{k}=\frac{z^{j}\left(1-\left(z^{j}\right)^{n}\right)}{1-z^{j}}$
we obtain

\begin{eqnarray*}
\log(\prod_{k=1}^{n}(1-z^{k})) & = & -\left(\frac{z(1-z^{n}\text{)}}{1-z}+\frac{1}{2}\frac{z^{2}(1-(z^{2}\text{)}^{n}\text{)}}{1-z^{2}}+\frac{1}{3}\frac{z^{3}(1-(z^{3}\text{)}^{n}\text{)}}{1-z^{3}}+\cdots\right.\\
 &  & \left.+\frac{1}{j}\frac{z^{j}(1-(z^{j}\text{)}^{n}\text{)}}{1-z^{j}}+\cdots\right)\\
 & = & -\sum_{j=1}^{\infty}\frac{1}{j}\frac{z^{j}(1-(z^{j}\text{)}^{n}\text{)}}{1-z^{j}}
\end{eqnarray*}

By the ratio test, for each $n\geq1$, $-\sum_{j=1}^{\infty}\frac{1}{j}\frac{z^{j}(1-(z^{j}\text{)}^{n}\text{)}}{1-z^{j}}$
converges uniformly and absolutely on $\overline{D}(0;r)$.

Furthermore, we claim that 
\[
\left\Vert -\sum_{j=1}^{\infty}\frac{1}{j}\frac{z^{j}(1-(z^{j}\text{)}^{n}\text{)}}{1-z^{j}}-(-\sum_{j=1}^{\infty}\frac{1}{j}\frac{z^{j}}{1-z^{j}})\right\Vert _{\overline{D}(0;r)}\rightarrow0\textrm{ as }n\rightarrow\infty.
\]

For $z\in\overline{D}(0;r)$, 
\begin{eqnarray*}
\abs{-\sum_{j=1}^{\infty}\frac{1}{j}\frac{z^{j}(1-(z^{j}\text{)}^{n}\text{)}}{1-z^{j}}-(-\sum_{j=1}^{\infty}\frac{1}{j}\frac{z^{j}}{1-z^{j}})} & = & \abs{\sum_{j=1}^{\infty}\frac{1}{j}\frac{z^{j}}{1-z^{j}}z^{jn}}\\
 & \leq & \sum_{j=1}^{\infty}\frac{1}{j}\frac{|z^{j}|}{|1-z^{j}|}|z|^{jn}\\
 & \leq & r^{n}\sum_{j=1}^{\infty}\frac{1}{j}\frac{|z^{j}|}{|1-z^{j}|}\\
 & \rightarrow & 0\textrm{ as }n\rightarrow\infty
\end{eqnarray*}
as claimed.

Consequently,
\[
\|\exp(-\sum_{j=1}^{\infty}\frac{1}{j}\frac{z^{j}}{1-z^{j}})-(-\sum_{j=1}^{\infty}\frac{1}{j}\frac{z^{j}(1-(z^{j}\text{)}^{n}\text{)}}{1-z^{j}}\|_{\overline{D}(0;r)}\rightarrow0\textrm{ as }n\rightarrow\infty.
\]

Next, 
\[
\exp(\log(\prod_{k=1}^{n}(1-z^{k})))=\prod_{k=1}^{n}(1-z^{k})=\exp\left(-\sum_{j=1}^{\infty}\frac{1}{j}\frac{z^{j}(1-(z^{j}\text{)}^{n}\text{)}}{1-z^{j}}\right).
\]

Finally, we have

(1) $\|\prod_{j=1}^{\infty}(1-z^{j})-\prod_{j=1}^{n}(1-z^{j})\|_{\overline{D}(0;r)}\rightarrow0$
as $n\rightarrow\infty$ (Theorem 4.5),

(2) $\|\exp(-\sum_{j=1}^{\infty}\frac{1}{j}\frac{z^{j}}{1-z^{j}})-(-\sum_{j=1}^{\infty}\frac{1}{j}\frac{z^{j}(1-(z^{j}\text{)}^{n}\text{)}}{1-z^{j}}\|_{\overline{D}(0;r)}\rightarrow0$
as $n\rightarrow\infty$, and

(3) $\prod_{j=1}^{n}(1-z^{j})=\exp\left(-\sum_{j=1}^{\infty}\frac{1}{j}\frac{z^{j}(1-(z^{j}\text{)}^{n}\text{)}}{1-z^{j}}\right)$,

which, together, imply $\prod_{j=1}^{\infty}(1-z^{j})=\exp(-\sum_{j=1}^{\infty}\frac{1}{j}\frac{z^{j}}{1-z^{j}})$.\end{proof}
\begin{cor}
For any $0\boldsymbol{\leq}x<1$, $\prod_{j=1}^{\infty}(1-x^{j})>0$.\end{cor}
\begin{proof}
Since $\prod_{j=1}^{\infty}(1-x^{j})=\exp(-\sum_{j=1}^{\infty}\frac{1}{j}\frac{x^{j}}{1-x^{j}})$,
and since $-\sum_{j=1}^{\infty}\frac{1}{j}\frac{x^{j}}{1-x^{j}}<0$,
we have $1>\exp(-\sum_{j=1}^{\infty}\frac{1}{j}\frac{x^{j}}{1-x^{j}})>0$.
\end{proof}
\medskip{}

We list various facts about circularly accessible points, topological
boundary points, and kissing disks that are used throughout the paper.

\medskip{}

\begin{defn}
Let $K\subset\mathbb{C}$ be compact, $\mathbb{C}\sm K$ be connected,
and $z\in\partial K$. Then
\begin{lyxlist}{00.00.0000}
\item [{(1)}] $z$ is a \emph{type I boundary point} (resp., \emph{type
II}) iff each (resp., some) open nghd of $z$ intersects (resp., does
not intersect) $\int(K)$. We denote the set of type I (resp., type
II) boundary points by $\partial K_{\text{I}}$ (resp., $\partial K_{\text{II}}$).
Note that $\partial K=\partial K_{\text{I}}\cup\partial K_{\text{II}}$
is a disjoint union. 
\item [{(2)}] $z$ is \emph{circularly accessible (ca)} iff there exists
$u_{z}\in\mathbb{T}$ and $1>r_{z}>0$ such that the closed disk $\overline{D}(c_{z};r_{z})$,
with center $c_{z}=z+r_{z}u_{z}$, satisfies $\overline{D}(c_{z};r_{z})\cap K=\left\{ z\right\} $
and $\overline{D}(c_{z};r_{z})\sm\left\{ z\right\} \subset\mathbb{C}\sm K$.
We denote the set of ca points by $\mathcal{P}_{\text{ca}}(K)$.
\item [{(3)}] Such a disk is called a \emph{kissing disk at $z$}, and
we use the notation $(\mathrm{kd})_{z,c_{z},r_{z}}\equiv\overline{D}(c_{z};r_{z})$.
\end{lyxlist}
\end{defn}
\medskip{}

\begin{thm}
\emph{(see \cite{Bachar2})} $\mathcal{P}_{\text{ca}}(K)$ is dense
in $\partial K$.\end{thm}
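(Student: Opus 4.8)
The plan is to show that every point of $\partial K$ can be approximated by circularly accessible points, using only elementary planar topology. Fix $z_{0}\in\partial K$ and $\varepsilon>0$; I want to produce a ca point inside $D(z_{0};\varepsilon)$. The key observation is that $z_{0}\in\partial K$ means every neighbourhood of $z_{0}$ meets $\mathbb{C}\sm K$, so there is a point $w\in D(z_{0};\varepsilon/2)\sm K$. Since $\mathbb{C}\sm K$ is open, some open disk $D(w;\eta)$ is contained in $\mathbb{C}\sm K$; enlarge this disk, keeping the same ``direction'' toward $K$, until it first touches $K$. Concretely, consider the family of open disks $D(w;s)$ for $s>0$: for small $s$ the disk misses $K$, but since $z_{0}\in K$ lies within distance $\varepsilon/2$ of $w$, for $s$ large enough the closed disk $\overline{D}(w;s)$ meets $K$; let $s_{0}=\sup\{s: D(w;s)\cap K=\emptyset\}$, so $0<s_{0}\le\varepsilon/2<1$. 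Then $D(w;s_{0})\cap K=\emptyset$ while $\overline{D}(w;s_{0})\cap K\ne\emptyset$, so the intersection $\overline{D}(w;s_{0})\cap K$ is a nonempty subset of the circle $C(w;s_{0})$.

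The remaining issue is that $\overline{D}(w;s_{0})\cap K$ might contain more than one point of the boundary circle, so this disk is not yet a kissing disk. I would remedy this by a second shrinking step. Pick any $z_{1}\in \overline{D}(w;s_{0})\cap K \subseteq C(w;s_{0})$; note $|z_{1}-z_{0}|\le |z_{1}-w|+|w-z_{0}| = s_{0}+|w-z_{0}|<\varepsilon$, so $z_{1}\in D(z_{0};\varepsilon)$. Now slide the disk toward $z_{1}$: for $0<t<1$ let $c_{t}=z_{1}+t(w-z_{1})$ and $r_{t}=t s_{0}$, so $\overline{D}(c_{t};r_{t})$ is the disk internally tangent to $C(w;s_{0})$ at $z_{1}$, shrunk by the factor $t$. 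Each such disk satisfies $\overline{D}(c_{t};r_{t})\sm\{z_{1}\}\subset D(w;s_{0})\subset \mathbb{C}\sm K$, because a disk internally tangent at $z_{1}$ lies in the open disk except for the single point $z_{1}$. Hence $\overline{D}(c_{t};r_{t})\cap K=\{z_{1}\}$ for every $t\in(0,1)$. Choosing $t$ small enough that $r_{t}<1$ (any $t$ works here since $s_0 < 1$ already), this exhibits $z_{1}$ as a circularly accessible point with kissing disk $(\mathrm{kd})_{z_{1},c_{t},r_{t}}$, and $z_{1}\in D(z_{0};\varepsilon)$, proving density.

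The one point requiring a little care — and the main (minor) obstacle — is the geometric fact that a closed disk internally tangent to a larger circle at a point $p$ meets the larger closed disk only in $p$ together with interior points; equivalently, $\overline{D}(c_{t};r_{t})\cap C(w;s_{0})=\{z_{1}\}$ and $\overline{D}(c_{t};r_{t})\sm\{z_{1}\}\subset D(w;s_{0})$. This is an elementary computation with the triangle inequality: for $u\in\overline{D}(c_{t};r_{t})$ one has $|u-w|\le |u-c_{t}|+|c_{t}-w| \le t s_{0} + (1-t)s_{0} = s_{0}$, with equality forcing $u$, $c_{t}$, $w$ collinear in the right order and $|u-c_{t}|=r_{t}$, which pins down $u=z_{1}$. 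Everything else is the definition of $\partial K$, openness of $\mathbb{C}\sm K$, and a supremum argument; no analytic capacity, representing measures, or deep function theory is used, consistent with the paper's stated goal. The conclusion is that $\mathcal{P}_{\mathrm{ca}}(K)$ is dense in $\partial K$.
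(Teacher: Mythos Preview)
Your proof is correct and follows essentially the same route as the paper's: pick a point $w$ in the complement near $z_{0}$, take the largest open disk about $w$ missing $K$ (your $s_{0}$ is exactly $\mathrm{dist}(w,K)$, which is how the paper phrases it), choose a nearest point $z_{1}\in K$ on its boundary circle, and then observe that any closed disk centered on the open segment $(w,z_{1})$ and passing through $z_{1}$ is a kissing disk at $z_{1}$. The paper states the last containment ``as is easy to see''; your explicit triangle-inequality verification of the internal tangency is a welcome addition but not a different idea.
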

\begin{proof}
Let $\delta/2>0$ and $z\in\partial K$. There exists $z_{1}\in D(z;\delta/2))\sm K$.
Since $K$ is compact and $z_{1}\notin K$, there exists $z_{2}\in K$
such that $\delta_{1\text{ }}\equiv|z_{1}-z_{2}|=\mathrm{dist}(z_{1},K)>0$.
Note that $D(z_{1};\delta_{1})\subset\mathbb{C}\sm K$ and the open
line segment $(z_{1},z_{2})\subset\mathbb{C}\sm K$. Clearly, since
$z\in K$, $\mathrm{dist}(z_{1},K)\leq|z_{1}-z|<\delta/2$. Also,
$|z-z_{2}|\leq|z_{1}-z|+|z_{1}-z_{2}|<\delta/2+\delta/2=\delta$.
We claim that $z_{2}$ is a ca point. In fact, for every $w\in(z_{1},z_{2})$,
$\overline{D}(w;|w-z_{2}|)\cap K=\left\{ z_{2}\right\} $ and $\overline{D}(w;|w-z_{2}|)\sm\left\{ z_{2}\right\} \subset D(z_{1};\delta_{1})\subset\mathbb{C}\sm K$
as is easy to see.
\end{proof}

\section{Facts about connected topological spaces}
\begin{defn}
Let $(X,\mathbb{X})$ be a connected topological space, and let $\mathcal{C}$
$\subset\mathbb{X}$ be any covering of $X$ ($\mathbb{X}$ = topology
on $X$).
\begin{lyxlist}{00.00.0000}
\item [{(1)}] Two points $x$ and $y$ in $X$ are \emph{$\mathcal{C}$-simply
$n$-chained} iff there exists a finite sequence, $(U_{1},U_{2},...,U_{n})$
, of sets in $\mathcal{C}$ such that $x\in U_{1},\;y\in U_{n}$,
and $U_{i}\cap U_{j}\neq\emptyset$ iff $\big|i-j\big|\leq1$. We
term $(U_{1},U_{2},...,U_{n})$ a \emph{$\mathcal{C}$-simple $n$-chain
($\mathcal{C}$ snc)} and we say that $x$ is linked to $y$ by a
$\mathcal{C}$ snc.
\item [{(2)}] Two points $x$ and $y$ in $X$ are \emph{$\mathcal{C}$-weakly
$n$-chained} iff there exists a finite sequence, $(U_{1},U_{2},...,U_{n})$
, of sets in $\mathcal{C}$ such that $x\in U_{1},\;y\in U_{n}$,
and $U_{i}\cap U_{i+1}\neq\emptyset$ for $i=1,2,...,n-1$. We term
$(U_{1},U_{2},...,U_{n})$ a \emph{$\mathcal{C}$-weak $n$-chain
($\mathcal{C}$ wnc)} and we say that $x$ is linked to $y$ by a
$\mathcal{C}$ wnc.
\end{lyxlist}
\end{defn}
\medskip{}

\begin{rem}
If $(U_{1},U_{2},...,U_{n})$ is a $\mathcal{C}$ snc (resp., $\mathcal{C}$
wnc) that links $x$ to $y$, then $(U_{n},U_{n-1},...,U_{1})$ is
a $\mathcal{C}$ snc (resp., $\mathcal{C}$ wnc) that links $y$ to
$x$.
\end{rem}
\medskip{}

\begin{thm}
$(X,\mathbb{X})$ is a connected topological space iff for every covering,
$\mathcal{C}\subset\mathbb{X}$, and every pair of distinct points
$x$ and $y$ in $X$, there is a $\mathcal{C}$ wnc that links x
to y.\end{thm}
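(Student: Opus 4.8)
The plan is to prove both directions of the equivalence separately, using the chain/linking terminology of Definition 5.1.

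First I would handle the easy direction: if $(X,\mathbb{X})$ is disconnected, produce a covering and a pair of points with no linking $\mathcal{C}$ wnc. Write $X = A \sqcup B$ as a disjoint union of two nonempty open sets. Take the covering $\mathcal{C} = \{A, B\}$, pick $x \in A$ and $y \in B$. Any weak $n$-chain $(U_1,\dots,U_n)$ from $x$ to $y$ would need each $U_i \in \{A,B\}$, with $x \in U_1$ forcing $U_1 = A$ (since $x \notin B$) and $y \in U_n$ forcing $U_n = B$; but consecutive sets must intersect, and $A \cap B = \emptyset$, so the chain can never switch from $A$ to $B$ — contradiction. Hence the chaining condition fails, establishing the contrapositive of the "only if" direction.

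For the converse — assuming the chaining condition holds for every covering, show $X$ is connected — I would again argue by contraposition, but it is cleaner to argue directly using a fixed basepoint. Fix $x_0 \in X$ and let $W = \{\, y \in X : x_0 \text{ is linked to } y \text{ by a } \mathcal{C} \text{ wnc}\,\}$... actually the subtlety is that the chaining hypothesis is quantified over \emph{all} coverings, so to exploit it I would instead suppose $X = A \sqcup B$ with $A, B$ nonempty and open, and derive a contradiction exactly as in the first paragraph: the covering $\mathcal{C} = \{A,B\}$ and points $x \in A$, $y \in B$ admit no $\mathcal{C}$ wnc, contradicting the hypothesis. So in fact both directions reduce to the same one-line observation about the two-element covering $\{A,B\}$, and the theorem is almost immediate once that observation is made; the "if" direction is just the contrapositive of "a separation $\{A,B\}$ blocks chaining," and the "only if" direction needs the genuine work.

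Thus the real content is the "only if" direction: \emph{connected} implies \emph{every covering admits chains}. Here I would fix a covering $\mathcal{C}$ and a point $x$, and define $V = \bigcup \{\, U \in \mathcal{C} : U \text{ lies in some } \mathcal{C} \text{ wnc starting at a member of }\mathcal{C}\text{ containing }x\,\}$ — more precisely, let $V$ be the union of all $U \in \mathcal{C}$ such that $U$ is $\mathcal{C}$-weakly chainable to some fixed $U_0 \ni x$. Then $V$ is open (a union of open sets). I claim $X \setminus V$ is also open: if $z \in X \setminus V$, pick $U \in \mathcal{C}$ with $z \in U$; then $U \cap V = \emptyset$, for if $U$ met some $U'$ appearing in a chain from $U_0$, then $U$ itself would extend that chain and $z$ would lie in $V$. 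So $U \subset X \setminus V$, proving $X \setminus V$ open. Since $V \ni x$ is nonempty and $X$ is connected, $X \setminus V = \emptyset$, i.e. $V = X$; hence every $y \in X$ lies in some $U_n$ weakly chained to $U_0 \ni x$, which (after prepending $U_0$ if needed and using Remark 5.2 for symmetry/reversal) gives the desired $\mathcal{C}$ wnc linking $x$ to $y$. The main obstacle is purely bookkeeping: making sure the "chainable to $U_0$" relation is set up so that $V$ and its complement are both manifestly open, and checking the degenerate cases ($x = y$, or $x, y$ in a common member of $\mathcal{C}$) so that $n$ can be taken $\geq 1$.
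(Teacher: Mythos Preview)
Your proposal is correct and follows essentially the same route as the paper: the $\Leftarrow$ direction uses the two-set covering $\{A,B\}$ coming from a separation, and the $\Rightarrow$ direction shows that the set of points chainable from $x$ is clopen. The only cosmetic difference is that the paper argues $\Rightarrow$ by contraposition and introduces two sets $X'$ and $Y'$ (points chainable from $x$ and from $y$, respectively) together with a case analysis on $X\setminus(X'\cup Y')$, whereas your direct argument with the single set $V$ is a mild streamlining of the same idea.
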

\begin{proof}
$\Rightarrow$

We argue the contrapositive. Suppose there exists a covering, $\mathcal{C}\subset\mathbb{X}$,
and there exists a pair of points $x\neq y$ in $X$ such that no
$\mathcal{C}$ wnc links $x$ to $y$. By Remark 5.2, it follows that
no $\mathcal{C}$ wnc links $y$ to $x$. Put 
\[
X'=\left\{ x'\in X:\textrm{ there is a }\mathcal{C}\textrm{ wnc linking }x\textrm{ to }x'\right\} 
\]
and put 
\[
Y'=\left\{ y'\in X:\textrm{ there is a }\mathcal{C}\textrm{ wnc linking }y\textrm{ to }y'\right\} .
\]
Thus, $y$ can not lie in $X'$ and $x$ can not lie in $Y'$. We
now prove that both $X'$ and $Y'$ are open. It suffices to only
show $X'$ is open because, by symmetry, the same proof will apply
to $Y'$.

Let $x'\in X'$ be arbitrary. By definition of $X'$, there is a $\mathcal{C}$
wnc, $(U_{1},U_{2},...,U_{n})$, linking $x$ to $x'$, and so $x\in U_{1}$,
$x'\in U_{n}$, and $U_{i}\cap U_{i+1}\neq\emptyset$ for $i=1,2,...,n-1$.
If we show that every point $x\lyxmathsym{\textquotedblright}$ of
$U_{n}$ is in $X'$, then since $U_{n}$is open, this will prove
that $x'$ is an interior point of $X'$, and hence that $X'$ is
open. But clearly, $x$ is linked to $x\lyxmathsym{\textquotedblright}$
by the same $\mathcal{C}$ wnc, $(U_{1},U_{2},...,U_{n})$, and so
$x\lyxmathsym{\textquotedblright}$ is in $X'$, as was to be proved.

To complete the proof, we will prove that $X$ is the disjoint union
of two non-empty open sets, and hence that $X$ is not connected.
If $X\sm(X'\boldsymbol{\cup}Y')$ is empty, we are done, so suppose
$X\sm(X'\boldsymbol{\cup}Y')$ is non-empty. Let $z\in X\sm(X'\boldsymbol{\cup}Y')$.
Since $\mathcal{C}$ is an open covering of $X$, there is an open
set $U$ in $\mathcal{C}$ such that $z\in U$. We'll show $U\subset X\sm(X'\boldsymbol{\cup}Y')$.
Suppose not. Then $U\cap X'\neq\emptyset$ or $U\cap Y'\neq\emptyset$.
Say that $U\cap X'\neq\emptyset$, and so there is a point $x'\in U\cap X'$.
Thus, there is a $\mathcal{C}$ wnc, $(U_{1},U_{2},...,U_{\text{n}})$,
linking $x$ to $x'$, and thus $x$ is linked to $z$ by the $\mathcal{C}$
wnc, $(U_{1},U_{2},...,U_{n},U)$, and so $z$ is in $X'$, a contradiction.
Hence, $z$ is an interior point of $X\sm(X'\boldsymbol{\cup}Y')$,
and so the pair, $(X'\boldsymbol{\cup}X\sm(X'\boldsymbol{\cup}Y'))$
and $Y'$, provides a non-trivial disconnection of $X$, as was to
be proved.

$\Leftarrow$ 

We argue the contrapositive. If $X$ is disconnected, let $X=U\boldsymbol{\cup}V$,
where both $U$ and $V$ are non-empty open sets that are disjoint.
Clearly, $\mathcal{C}=\left\{ U,V\right\} $ is an open covering of
$X$. If $u\in U$ and $v\in V$, then it is clear that there is no
$\mathcal{C}_{\text{wnc}}$ that links $u$ to $v$.\end{proof}
\begin{thm}
Let $(X,\mathbb{X})$ be a connected topological space. For every
$\mathcal{C}$ wnc, $(U_{1},U_{2},...,U_{n})$, that links $x$ to
$y$, where $x,y$ is any pair of points in $X$, there is an ordered
subset, $(U_{1},U_{i_{2}},U_{i_{3}},...,U_{i_{k}},U_{n})$, $1<i_{1\text{ }}<i_{2}<...<i_{k}<n$,
of $(U_{1},U_{2},...,U_{n})$ that is a $\mathcal{C}$ snc that links
$x$ to $y$.\end{thm}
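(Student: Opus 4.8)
The plan is to extract the $\mathcal{C}$ snc from the $\mathcal{C}$ wnc by a greedy "jump ahead as far as possible" procedure, iterated until we reach $U_n$. Concretely: start with $U_1$. Having arrived at some $U_{i_s}$ (with $i_1 = 1$), look at all indices $j > i_s$ such that $U_{i_s} \cap U_j \neq \emptyset$; this set is nonempty because the wnc guarantees $U_{i_s} \cap U_{i_s+1} \neq \emptyset$. Let $i_{s+1}$ be the \emph{largest} such $j$. If $i_{s+1} = n$ we stop; otherwise we repeat. Since the indices $i_1 < i_2 < i_3 < \cdots$ are strictly increasing and bounded by $n$, the process terminates after finitely many steps at $i_k = n$ (or we may reach $n$ exactly; in the degenerate case $n=1$ or $n=2$ the chain is already an snc). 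This yields an ordered subsequence $(U_1 = U_{i_1}, U_{i_2}, \ldots, U_{i_k} = U_n)$ with $x \in U_1$, $y \in U_n$.

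It remains to verify this subsequence is a $\mathcal{C}$ snc, i.e.\ that $U_{i_s} \cap U_{i_t} \neq \emptyset$ \emph{iff} $|s - t| \leq 1$. The "if" direction ($|s-t| = 1$) is immediate from the construction: we chose $i_{s+1}$ so that $U_{i_s} \cap U_{i_{s+1}} \neq \emptyset$. The content is the "only if" direction: consecutive sets in the original list do intersect, but we must rule out $U_{i_s}$ meeting $U_{i_t}$ for $t \geq s+2$. Here is the key point — the greedy maximality. Suppose $U_{i_s} \cap U_{i_t} \neq \emptyset$ for some $t \geq s+2$. Since $i_t > i_{s+1}$ and $U_{i_s}$ meets $U_{i_t}$, the index $i_t$ was a candidate available when we chose $i_{s+1}$ as the \emph{largest} index $> i_s$ meeting $U_{i_s}$; hence $i_{s+1} \geq i_t$, contradicting $i_{s+1} < i_{s+2} \leq i_t$. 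Therefore no such intersection occurs, and the subsequence is a genuine $\mathcal{C}$ snc linking $x$ to $y$.

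The main (and only real) obstacle is pinning down the "only if" half of the snc condition; everything else is bookkeeping about a strictly increasing finite sequence of indices. The argument above handles it via the maximality built into the greedy choice: any "long-range" intersection $U_{i_s} \cap U_{i_t}$ with $t \geq s+2$ would have forced the earlier jump to be at least as far as $i_t$, which is impossible. One should also remark that connectedness of $(X,\mathbb{X})$ is not actually needed for \emph{this} statement — it is a purely combinatorial consequence of the wnc condition — but it is harmless to keep the hypothesis as stated, since in the applications the wnc is produced via Theorem~5.3, which does use connectedness.
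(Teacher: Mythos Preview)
Your proof is correct and takes a genuinely different route from the paper's. The paper argues by induction on the length of the weak chain: if a wnc of length $N+1$ is not already simple, one finds indices $i<j$ with $j-i>1$ and $U_i\cap U_j\neq\emptyset$, deletes the intermediate sets $U_{i+1},\dots,U_{j-1}$ to obtain a shorter wnc (still beginning at $U_1$ and ending at $U_{N+1}$), and invokes the inductive hypothesis. Your argument is instead a direct greedy construction: from each $U_{i_s}$ jump to the \emph{largest} index meeting it, and use that maximality to rule out any intersection $U_{i_s}\cap U_{i_t}$ with $t\geq s+2$.

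Both arguments are elementary and short. The inductive proof is perhaps the first thing one writes down, but it is nonconstructive in that the resulting snc depends on an arbitrary choice of the pair $(i,j)$ at each stage. Your greedy procedure is fully constructive and pins down a canonical snc (indeed the one with the fewest links among all sub-chains starting at $U_1$), and it isolates cleanly the one nontrivial point---the ``only if'' half of the snc condition---which falls out immediately from maximality. Your closing remark is also apt: neither proof uses the connectedness hypothesis, which is present only because in the paper the wnc is supplied by Theorem~5.3.
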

\begin{proof}
If $n=1$ or 2, the result is trivial, so let $n=3$. If a $\mathcal{C}$
w3c, $(U_{1},U_{2},U_{3})$, is not a $\mathcal{C}$ s3c, then, clearly,
we must have $U_{1}\cap U_{3}\neq\emptyset$. Thus, $(U_{1},U_{3})$
is clearly a $\mathcal{C}$ s2c that links $x$ to $y$.

We now proceed by induction. Suppose the result is true for $n=1,2,3,...,N$.
Let $(U_{1},U_{2},...,U_{N},U_{N+1})$ be a $\mathcal{C}$ wnc that
is not a $\mathcal{C}$ snc. By defintion of a $\mathcal{C}$ snc,
there exists $i$ and $j$ in $\left\{ 1,2,...,N+1\right\} $ such
that $U_{\text{i}}\cap U_{\text{j}}\neq\emptyset$ and $\big|i-j\big|>1$.
We may assume that $i<j$. It follows that $(U_{1},U_{2},...,U_{i},U_{j},...,U_{N+1})$
is a $\mathcal{C}$ wnc that links $x$ to $y$ and whose length is
$\leq N$, and so induction applies.
\end{proof}

\section{Fundamental theorem on the existence of homeomorphic, conformal,
norm one peaking functions for $A(K)$}
\begin{defn}
Let $K\in\mathcal{K}$ with $\int(K)\neq\emptyset$, and let $z_{1}$
and $z_{2}$ be two distinct points in $\partial K_{\text{I}}\cap\mathcal{P}_{\text{ca}}(K)$.

Let $\Gamma$ be a Jordan path such that
\begin{enumerate}
\item $z_{\text{i}}\in\Gamma$, $i=1,2$;
\item $\Gamma\backslash\left\{ z_{1},z_{2}\right\} \subset\mathbb{C}\backslash K$;
\item $\int(\Gamma)\subset\mathbb{C}\backslash K$.
\end{enumerate}
We call such a path a kissing path with respect to $z_{1}$ and $z_{2}$.\end{defn}
\begin{thm}
Consider $K\in\mathcal{K}$ with $\int(K)\neq\emptyset$, and a distinct
pair $z_{1}$ and $z_{2}$ in $\partial K_{\text{I}}\cap\mathcal{P}_{\text{ca}}(K)$.

(a) For every such pair there exists a kissing path with respect to
$z_{1}$ and $z_{2}$.

(b) There exist $f\in A(K)$ such that:
\begin{enumerate}
\item $\|f\|=1$;
\item f is a homeomorphism on K and conformal on int(K);
\item $f(z_{1})=1$and $f(z_{2})=-1$;
\item $f(K)\subset\left\{ 1,-1\right\} \cup D(0;1)$, f$(K\backslash\left\{ z_{1},z_{2}\right\} \subset D(0,1)$.
\end{enumerate}

(c) There exist $g\in A(K)$ such that:
\begin{enumerate}
\item $\|g\|=1$;
\item g is a homeomorphism on K and conformal on int(K);
\item $g(z_{1})=1$and $g(z_{2})=0$;
\item $g(K)\subset\left\{ 1,0\right\} \cup D(\frac{1}{2};\frac{1}{2})$,
g$(K\backslash\left\{ z_{1},z_{2}\right\} )\subset D(\frac{1}{2},\frac{1}{2})$.
\end{enumerate}
\end{thm}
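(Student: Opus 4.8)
The plan is to construct the peaking function by first producing a Jordan curve enclosing $K$ that touches $K$ exactly at $z_1$ and $z_2$, then transplanting the unit disk via Carath\'eodory's theorem, and finally composing with an Euler-type infinite product to pull the rest of $K$ strictly inside the disk.

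\textbf{Step (a): the kissing path.} Since $z_1,z_2\in\mathcal{P}_{\text{ca}}(K)$, each has a kissing disk $(\mathrm{kd})_{z_i,c_i,r_i}$ meeting $K$ only at $z_i$. Since $\mathbb{C}\sm K$ is connected and open (hence path-connected), there is an arc in $\mathbb{C}\sm K$ joining a boundary point of $(\mathrm{kd})_{z_1}$ to one of $(\mathrm{kd})_{z_2}$; splicing this with sub-arcs of the two kissing circles and of a large circle containing $K$ gives a Jordan curve $\Gamma$ with $K\sm\{z_1,z_2\}$ in its interior component and $\Gamma\cap K=\{z_1,z_2\}$. (This is the ``Jordan kissing path'' construction alluded to in the abstract; the arc-surgery must be done carefully so the result is a genuine Jordan curve, i.e.\ injective — I expect this to be the main technical obstacle, but it is topological bookkeeping rather than anything deep.) The interior $\int(\Gamma)\cup\Gamma$ is a compact set $L\supset K$ whose complement is connected, $L\in\mathcal{K}$, and $\partial L=\Gamma$ is a Jordan curve.

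\textbf{Step (b): the conformal model.} Apply Corollary 4.1.1 (the normalized Carath\'eodory extension) to the Jordan domain $\int(\Gamma)$ to get a homeomorphism $\varphi:\overline{\int(\Gamma)}\to\overline{D}(0;1)$, conformal on the interior, with $\varphi(z_1)=1$. Composing with a suitable linear fractional transformation from Lemma 3.1 (which sends the pair $(\varphi(z_1),\varphi(z_2))=(1,\varphi(z_2))$ to $(1,-1)$), we obtain a homeomorphism $\psi$ of $\overline{\int(\Gamma)}$ onto $\overline{D}(0;1)$, conformal inside, with $\psi(z_1)=1$, $\psi(z_2)=-1$. Restricting $\psi$ to $K\subset\overline{\int(\Gamma)}$ gives a function that is continuous on $K$, holomorphic on $\int(K)\subset\int(\Gamma)$, injective, with $\|\psi|_K\|=1$ and $\psi(K)\subset\overline{D}(0;1)$. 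But $\psi$ need not yet satisfy $\psi(K\sm\{z_1,z_2\})\subset D(0;1)$: points of $K\cap\Gamma$ other than $z_1,z_2$ could land on $\mathbb{T}$ — except that $K\cap\Gamma=\{z_1,z_2\}$ by construction, so in fact $\psi(K\sm\{z_1,z_2\})\subset D(0;1)$ already. Thus $f:=\psi|_K$ has all four properties claimed in (b), with item (2) following because a continuous injection from the compact set $K$ is a homeomorphism onto its image and $\psi$ is conformal on $\int(K)$.

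\textbf{Step (c): the shift from $\{1,-1\}$ to $\{1,0\}$.} Given $f$ from part (b), map $\overline{D}(0;1)$ onto $\overline{D}(\tfrac12;\tfrac12)$ by the linear fractional map $w\mapsto \tfrac{1}{2}(1+w)\cdot$ — more precisely $T(w)=\frac{1+w}{2}$ sends $1\mapsto 1$, $-1\mapsto 0$, $\mathbb{T}\to C(\tfrac12;\tfrac12)$, $D(0;1)\to D(\tfrac12;\tfrac12)$. Then $g:=T\circ f$ is continuous on $K$, holomorphic on $\int(K)$, a homeomorphism onto its image, conformal on $\int(K)$, with $g(z_1)=1$, $g(z_2)=0$, $g(K)\subset\{1,0\}\cup D(\tfrac12;\tfrac12)$ and $g(K\sm\{z_1,z_2\})\subset D(\tfrac12;\tfrac12)$; and $\|g\|=1$ since $g(z_1)=1$ and $D(\tfrac12;\tfrac12)\cup\{1,0\}\subset\overline{D}(0;1)$. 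This is immediate once (b) is in hand, so the only real work in the whole theorem is the Jordan-curve surgery in (a); parts (b) and (c) are then straightforward applications of Carath\'eodory's theorem, Lemma 3.1, and Lemma 3.2.
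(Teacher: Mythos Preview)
Your overall strategy for (b) and (c) is sound, but your version of (a) diverges from the paper's definition of \emph{kissing path}, and this changes the architecture of (b).

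In the paper (Definition 6.1), a kissing path $\Gamma$ satisfies $\int(\Gamma)\subset\mathbb{C}\setminus K$: the \emph{interior} of $\Gamma$ lies in the complement, so $K$ sits in the \emph{exterior} of $\Gamma$ (except at $z_1,z_2$). The paper builds this curve locally, using only the two kissing disks and a simple chain of small $\delta$-disks covering an arc between them (Theorems 5.3--5.4); the resulting Jordan curve is small and stays near that arc. Then in part (b) the paper applies the inversion $\varphi(z)=1/(z-z_0)$ with $z_0\in\int(\Gamma)$, which swaps interior and exterior and places $\varphi(K\setminus\{z_1,z_2\})$ inside the Jordan curve $\varphi\circ\Gamma(\mathbb{T})$; only then is Carath\'eodory invoked, followed by the LFT of Lemma 3.1. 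Part (c) is exactly your $T(w)=\tfrac12(1+w)$.

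You instead try to build a Jordan curve with $K$ already in its interior, so that Carath\'eodory applies directly and no inversion is needed. That is a legitimate alternative, and your (b) and (c) go through cleanly once you have such a curve. The cost is that your curve must enclose all of $K$: you need \emph{two} disjoint arcs from $z_1$ to $z_2$ in $\mathbb{C}\setminus K$ whose union surrounds $K$. Your sketch (``splice with sub-arcs of the kissing circles and of a large circle containing $K$'') does not say how to get from $z_1,z_2$ out to that large circle and back without crossing the first arc or $K$; this is where the real work hides, and it is more delicate than the paper's local construction. The paper sidesteps this entirely: build a small curve in the complement, then let the M\"obius inversion do the enclosing. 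So you have traded an easy curve plus an inversion for a harder curve and no inversion; both lead to the same Carath\'eodory--LFT endgame.
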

\begin{proof}
Part (a)

For $i=1,2$, the radii, $r_{z_{1}}$ and $r_{z_{2}}$, of the kissing
disks, $\overline{D}(c_{z_{i}};r_{z_{i}})$, $i=1,2$, may be shrunk
sufficiently so that they are disjoint. Let $w_{i}=z_{i}+2(r_{z_{i}})u_{z_{i}}$,
where $u_{z_{i}}=\frac{c_{z_{i}}-z_{i}}{\abs{c_{z_{i}}-z_{i}}}$,
be the ends of the diameter line of the kissing disk starting at $z_{i}$
and passing through the center, $c_{z_{i}}$.

It is a well-known fact that an open set $A\subset\bbR^{n}$ is connected
iff it is pathwise-connected iff it is arcwise-connected.

Thus since $\mathbb{C}\sm K$ is open and connected, there is a $f:\left[0,1\right]\rightarrow\mathbb{C}\sm K$
that is a homeomorphism onto its image and such that $f\left(0\right)=w_{1}$
and $f\left(1\right)=w_{2}$.

For $i=1,2$, put $S_{i}=f^{-1}\left(\overline{D}(c_{z_{i}};r_{z_{i}})\right)$.
Clearly $0\in S_{1}$ and $1\in S_{2}$. Moreover, $S_{i}$ is a closed,
compact proper subset of $\left[0,1\right]$ and $z_{i}$ does not
lie in $f\left(\left[0,1\right]\right)$.

Define $l=\sup\left\{ t\in S_{1}\right\} $ and $u=\inf\left\{ t\in S_{2}\right\} $.
Using the definitions of supremum and infimum and the facts that $f$
and $f^{-1}$ are homeomorphisms, it is a straightforward consequence
that:

(1) $\zeta_{1}=f\left(l\right)\in\partial\overline{D}(c_{z_{1}};r_{z_{1}})-\left\{ z_{1}\right\} $, 

(2) $\zeta_{2}=f\left(u\right)\in\partial\overline{D}(c_{z_{2}};r_{z_{2}})-\left\{ z_{2}\right\} $. 

Thus, if $h=f|_{\left[l,u\right]}$, then $h$ is homeomorphic with
its image, and the image is in the complement of $K\cup\overline{D}(c_{z_{1}};r_{z_{1}})\cup\overline{D}(c_{z_{2}};r_{z_{2}})-\left\{ \zeta_{1},\zeta_{2}\right\} $
and $h\left(l\right)=\zeta_{1}$ and $h\left(u\right)=\zeta_{2}$.

If we define $g\left(t\right)=f\left(l+\left(u-l\right)t\right)$,
then $g$ is a reparametrization of $h$.

Put $G=g\left[0,1\right]$.

There is a $\delta>0$ sufficiently small such that the open cover,
$\mathcal{C}=\left\{ D(\zeta;\delta):\zeta\in G\right\} $, of $G$
satisfies

(1') the closure of every member of $\mathcal{C}$ is in $\mathbb{C}\sm K$;

(2') $\overline{D}(\zeta_{1};\delta)\cap\overline{D}(\zeta_{2};\delta)=\emptyset$; 

(3') for $i=1,2$, $\overline{D}(c_{z_{i}};r_{z_{i}})\cap\overline{D}(\zeta_{i};\delta)$
is a proper subset of $\overline{D}(c_{z_{i}};r_{z_{i}})$. 

By compactness of $G$, there is a finite subcover, $\mathcal{C}_{\text{fin}}$
of $G$ that includes $D(\zeta_{i};\delta)$, $i=1,...,N$, where
$\zeta_{i}\in G-\left\{ \zeta_{1},\zeta_{2}\right\} $ for $i=3,...,N$. 

By Theorem 5.3, there is a $\mathcal{D}_{\text{fin}}$-weak $n$-chain,
$\mathcal{C}\mathcal{H}_{1}$, linking $\zeta_{1}$ to $\zeta_{2}$,
and by Theorem 5.4, there is an ordered subset, $\mathcal{C}\mathcal{H}_{2}\subset\mathcal{C}\mathcal{H}_{1}$
that is a $\mathcal{D}_{\text{fin}}$-simple $n$-chain that links
$\zeta_{1}$ to $\zeta_{2}$. Thus $\mathcal{C}\mathcal{H}_{2}=\left(D\left(\sigma_{1};\delta\right),...,D\left(\sigma_{n};\delta\right)\right)$,
where $\sigma_{i}\in G$, $i=1,...,n$, $\zeta_{1}\in D\left(\sigma_{1};\delta\right)$
and $\zeta_{2}\in D\left(\sigma_{n};\delta\right)$. Define the circle
$C\left(\sigma;\delta=\partial\bar{D}\left(\sigma;\delta\right)\right)$.
Clearly the sequence of circles, $S_{\textrm{circ}}=\left(C\left(\sigma_{1};\delta\right),...,C\left(\sigma_{n};\delta\right)\right)$,
is an ordered simple $n$-chain.

Now $C\left(\sigma_{1};\delta\right)\cap C\left(c_{z_{1}};r_{z_{1}}\right)$
consists of two points: $c_{1}^{1}$ and $c_{2}^{1}$, where $c_{1}^{1}$
is the first intersection met when proceeding counterclockwise from
$z_{1}$ along $C\left(c_{z_{1}};r_{z_{1}}\right)$.

Similarly, $C\left(\sigma_{n};\delta\right)\cap C\left(c_{z_{2}};r_{z_{2}}\right)$
consists of two points: $c_{1}^{n}$ and $c_{2}^{n}$, where $c_{1}^{n}$
is the first intersection met when proceeding counterclockwise from
$z_{2}$ along $C\left(c_{z_{2}};r_{z_{2}}\right)$.

We next construct a Jordan path.

Firstly, we construct a sequence of subarcs of the circles in $S_{\textrm{circ}}$.
Starting at $c_{1}^{1}$, proceed counterclockwise along $C\left(\sigma_{1};\delta\right)$
until we reach the intersection $c_{2}\in C\left(\sigma_{2};\delta\right)\cap C\left(\sigma_{1};\delta\right)$.
Let $\textrm{arc}\left(1,2\right)$ denote the path from $c_{1}^{1}$
to $c_{2}$ along $C\left(\sigma_{1};\delta\right)$. Proceeding inductively,
we obtain a seqeunce of arc paths, joined end to end, $\textrm{arc}\left(1,2\right)$,
$\textrm{arc}\left(2,3\right)$, ... , $\textrm{arc}\left(n-1,n\right)$.
Note that the last arc ends at $c_{2}^{n}$.

Secondly, we repeat this construction by starting at $c_{1}^{n}$
and ending at $c_{2}^{1}$ in order to obtain the sequence of $n$
arcs $\textrm{Arc}\left(n,n-1\right)$, ..., $\textrm{Arc}\left(2,1\right)$.

We next construct the desired Jordan path $S$ as follows: start at
$z_{1}$, go counterclockwise along $C\left(c_{z_{1}};r_{z_{1}}\right)$
to $c_{1}^{1}$, traverse $\textrm{arc}\left(1,2\right)$, $\textrm{arc}\left(2,3\right)$,
... , $\textrm{arc}\left(n-1,n\right)$ to $c_{2}^{n}$, traverse
counterclockwise along $C\left(c_{z_{2}};r_{z_{2}}\right)$ to $c_{1}^{n}$,
traverse the arcs $\textrm{Arc}\left(n,n-1\right),...,\textrm{Arc}\left(2,1\right)$
to $c_{2}^{1}$, and the go counterclockwise along $C\left(c_{z_{1}};r_{z_{1}}\right)$
to get to $z_{1}$. It is straightforward to construct (detail omitted)
a homeomorphism, $\Gamma:\mathbb{T}\rightarrow S$. Thus, $S=\Gamma\left(\mathbb{T}\right)$
is a Jordan curve.

Note that the winding number of $z\in\int(S)$ w.r.t. $\Gamma$ is
$+1$. For $i=1,2$, the curve contains $z_{i}$, $\Gamma\left(\mathbb{T}\right)\backslash\left\{ z_{1},z_{2}\right\} \subset\mathbb{C}\backslash K$,
and $\int(S)\subset\mathbb{C}\backslash K$, i.e., $\Gamma(\mathbb{T})$
is a kissing path with respect to $z_{1}$ and $z_{2}$.

Part (b)

For $z_{0}\in\int(\Gamma(\mathbb{T}))$, define the lft, $\varphi(z)=\frac{1}{z-z_{0}\text{ }}$,
$z\in\mathbb{C}\cup\left\{ \infty\right\} $ (extended complex plane).
Clearly, $\varphi\circ\Gamma$ is a homeomorphism from $\mathbb{T}$
onto $(\varphi\circ\Gamma)(\mathbb{T})$ and $(\varphi\circ\Gamma)(\mathbb{T})$
Jordan curve. Furthermore,

\begin{tabular}{l}
$K\sm\left\{ z_{1},z_{2}\right\} \subset\mathrm{ext}((\Gamma(\mathbb{T})),$\tabularnewline
$z_{\text{i}}\in\Gamma(\mathbb{T}),\;i=1,2,$\tabularnewline
$\varphi(K\sm\left\{ z_{1},z_{2}\right\} )\subset\varphi(\mathrm{ext}((\Gamma(\mathbb{T})))=\int((\varphi\circ\Gamma)(\mathbb{T})),$\tabularnewline
$w_{i}\boldsymbol{:=}\varphi(z_{i})\in(\varphi\circ\Gamma)(\mathbb{T}),\;i=1,2,$\tabularnewline
$\varphi(\int(\Gamma(\mathbb{T}))=\mathrm{ext}((\varphi\circ\Gamma)(\mathbb{T})),$\tabularnewline
$\partial(\int((\varphi\circ\Gamma)(\mathbb{T})))=(\varphi\circ\Gamma)(\mathbb{T}).$\tabularnewline
\end{tabular}

By Theorem 4.1, there exists $F:(\varphi\circ\Gamma)(\mathbb{T})\cup\int((\varphi\circ\Gamma)(\mathbb{T}))\mapsto\overline{D}(0;1)$
such that:

(1) $F:(\varphi\circ\Gamma)(\mathbb{T})\mapsto\mathbb{T}$ is a homeomorphism
onto;

(2) $F_{\big|\text{int}((\varphi\circ\Gamma)(\mathbb{T}))}$ is conformal
from $\int((\varphi\circ\Gamma)(\mathbb{T}))$ onto $D(0;1)$;

(3) $v_{i}:=F(w_{i})\in\mathbb{T}$.

\medskip{}

By Lemma 3.1, there exists a lft, $f$, of $\overline{D}(0;1)$ onto
$\overline{D}(0;1)$ such that $f(v_{1})=1$ and $f(v_{2})=-1$.

Thus, the composition, $f\circ F$, satisfies the aforementioned properties
of $F$, except that $(f\circ F)(v_{1})=1$ and $(f\circ F)(v_{2})=-1$.

From the properties of all the mappings discussed, we conclude that
the restriction to $K$ of the composition map, $g:=f\circ F\circ\varphi$,
is an element of $P(K)$, is a homeomorphism of $K$ into $\left\{ 1,-1\right\} \cup D(0;1)$,
maps $z_{1}$ to 1, maps $z_{2}$ to $-1$, and maps $K\sm\left\{ z_{1},z_{2}\right\} $
into $D(0,1)$. 

Part (c)

By (b), select $f\in A(K)$ with all the properties listed in (b).
Define $h(z)=\frac{1}{2}(1+z)$, $z\in\mathbb{C}$. Now $h$ is a
homeomorphism from $\overline{D}(0;1)$ onto $\overline{D}(\frac{1}{2};\frac{1}{2})$
and from $\mathbb{T}$ onto $\partial D(\frac{1}{2};\frac{1}{2})$,
and is conformal from $D(0;1)$ onto $D(\frac{1}{2};\frac{1}{2})$.
Define $g=h\circ f$. Clearly, $g\in A(K)$, and the properties under
(c) clearly follow from the properties of $f$ and $h$.
\end{proof}

\section{Main results}
\begin{defn}
For every distinct pair of points $u$ and $v$ in $\partial K_{\text{I}}\cap\mathcal{P}_{\text{ca}}(K)$,
Theorem 6.1 (b) assures the existence of an $g\in A(K)$ such that
$g$ is a homeomorphism from $K$ onto $\overline{D}(\frac{1}{2};\frac{1}{2})$,
is conformal from $\int(K)$ onto $D(\frac{1}{2};\frac{1}{2})$, $g(z)=0$
iff $z=u$, $g(z)=1$ iff $z=v$, $\|g\|=1$, and $g(K)\subset\left\{ 0,1\right\} \cup D(\frac{1}{2};\frac{1}{2})$.
We use the term, term, ``$g$ is a conformal homeomorphism relative
to $u$ and $v$''.
\end{defn}

\begin{defn}
Let $K\in\mathcal{K}$ with $\int(K)\neq\emptyset$, let $z_{0}\boldsymbol{\in}\partial K_{\text{I}}$,
let $(z_{n})_{n\geq1}$ be a distinct sequence in $\partial K_{\text{I}}\cap\mathcal{P}_{\text{ca}}(K)$
such that $|z_{n}-z_{0}|\searrow0$, and let $(f_{i})_{i}\subset M_{z_{0}}\cap A(K)$
be a sequence such that
\begin{enumerate}
\item $f_{i}(z_{j})=\delta_{i,j}$(Kronecker delta),
\item $\|f_{i}\|=1$ for all $i\geq1$,
\item for all $z\in K\backslash\left\{ z_{0},z_{1},z_{2},...,z_{j},...\right\} $,
$f_{i}(z))\subset C_{\frac{1}{2^{3}}}\cap D(\frac{1}{2};\frac{1}{2})$.
\end{enumerate}
We call a sequence having these properties a $(0,1)$ sequence in
$A(K)$ with respect to $z_{0}$ and $(z_{n})_{n\geq1}$.\end{defn}
\begin{lem}
For every $0<\alpha<\frac{1}{2}$, for every $0<\zeta<\frac{1}{2}$,
and for every $z_{0}\in\partial K$ distinct from $u$ and $v$, there
exists $g_{\alpha\text{,}\zeta}\in A(K)$ such that $g_{\alpha\text{,}\zeta}$
is a conformal homeomorphism relative to $u$ and $v$ and g$_{\alpha\text{,}\zeta}(z_{0})\in D(0;\zeta^{\frac{1}{\alpha}})$.
Moreover, $f_{\alpha\text{,}\zeta}:=\mathbb{Z}^{\alpha}\circ g_{\alpha\text{,}\zeta}$
is a conformal homeomorphism relative to $u$ and $v$ and $f_{\alpha\text{,}\zeta}(z_{0})\in D(0;\zeta)\cap D(\frac{1}{2};\frac{1}{2})$.\end{lem}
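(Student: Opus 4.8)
The plan is to use Theorem 6.1(c) to produce a starting conformal homeomorphism and then push its value at $z_0$ close to $0$ by composing with the linear fractional self-map $g$ of $\overline{D}(\frac12;\frac12)$ described in Lemma 3.2 (3.2.5)--(3.2.6), followed by the root map $\mathbb{Z}^\alpha$. First I would fix the distinct pair $u,v\in\partial K_{\text{I}}\cap\mathcal{P}_{\text{ca}}(K)$ and, by Theorem 6.1(c) (or equivalently Definition 7.1), obtain $g_0\in A(K)$ that is a conformal homeomorphism relative to $u$ and $v$; in particular $g_0$ is a homeomorphism of $K$ onto $\overline{D}(\frac12;\frac12)$, conformal on $\mathrm{int}(K)$, with $g_0(u)=0$, $g_0(v)=1$, $\|g_0\|=1$, and $g_0(K)\subset\{0,1\}\cup D(\frac12;\frac12)$. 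Since $z_0\neq u$ and $z_0\neq v$, the point $w_0:=g_0(z_0)$ lies in the open disk $D(\frac12;\frac12)$.

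Next I would invoke (3.2.5): for each $0<\beta<1$ the map $g_\beta(w)=\dfrac{\beta w}{(1-\beta)-(1-2\beta)w}$ is a linear fractional self-homeomorphism of $\overline{D}(\frac12;\frac12)$ fixing $0$ and $1$ and conformal on the open disk. By (3.2.6), $g_\beta(w_0)\to 0$ as $\beta\searrow 0$, so given the target radius $\zeta^{1/\alpha}$ (note $0<\zeta^{1/\alpha}<1$ since $0<\zeta<\frac12<1$) I can choose $\beta=\beta(\alpha,\zeta)$ small enough that $g_\beta(w_0)\in D(0;\zeta^{1/\alpha})$. I then set $g_{\alpha,\zeta}:=g_\beta\circ g_0$. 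Because composing $g_0$ with a conformal self-homeomorphism of $\overline{D}(\frac12;\frac12)$ fixing $0$ and $1$ preserves every defining property of a conformal homeomorphism relative to $u$ and $v$ (it is still onto $\overline{D}(\frac12;\frac12)$, conformal on $\mathrm{int}(K)$, norm one, sends $u\mapsto 0$, $v\mapsto 1$, and maps $K$ into $\{0,1\}\cup D(\frac12;\frac12)$), $g_{\alpha,\zeta}$ is a conformal homeomorphism relative to $u$ and $v$, and $g_{\alpha,\zeta}(z_0)=g_\beta(w_0)\in D(0;\zeta^{1/\alpha})$.

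Finally I would analyze $f_{\alpha,\zeta}:=\mathbb{Z}^\alpha\circ g_{\alpha,\zeta}$. Here I use the Teardrop Lemma: $\mathbb{Z}^\alpha$ is a homeomorphism on $\mathbb{C}\setminus(-\infty,0)$ that carries $\overline{D}(\frac12;\frac12)$ onto the compact teardrop $T_\alpha\subset\{0,1\}\cup(C_{\alpha/2}\cap D(\frac12;\frac12))$ and is conformal on the interior, with $\mathbb{Z}^\alpha(0)=0$ and $\mathbb{Z}^\alpha(1)=1$. Hence $f_{\alpha,\zeta}\in A(K)$ is again a homeomorphism of $K$ (onto $T_\alpha$), conformal on $\mathrm{int}(K)$, with $f_{\alpha,\zeta}(u)=0$, $f_{\alpha,\zeta}(v)=1$, $\|f_{\alpha,\zeta}\|=1$ (the supremum over the teardrop is attained at the corner $1$), and $f_{\alpha,\zeta}(K)\subset\{0,1\}\cup D(\frac12;\frac12)$, so it is a conformal homeomorphism relative to $u$ and $v$. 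For the value at $z_0$: writing $g_{\alpha,\zeta}(z_0)$ in the form $|g_{\alpha,\zeta}(z_0)|\exp(i\arg(\cdot))$ with $|\cdot|<\zeta^{1/\alpha}$, the formula $\mathbb{Z}^\alpha(z)=|z|^\alpha\exp(i\alpha\arg z)$ gives $|f_{\alpha,\zeta}(z_0)|=|g_{\alpha,\zeta}(z_0)|^\alpha<(\zeta^{1/\alpha})^\alpha=\zeta$, so $f_{\alpha,\zeta}(z_0)\in D(0;\zeta)$; and since $f_{\alpha,\zeta}(z_0)\in T_\alpha\subset D(\frac12;\frac12)$, in fact $f_{\alpha,\zeta}(z_0)\in D(0;\zeta)\cap D(\frac12;\frac12)$, as required.

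The only genuinely delicate point is the bookkeeping that composing with $g_\beta$ and with $\mathbb{Z}^\alpha$ does not destroy membership in $A(K)$ or the ``conformal homeomorphism relative to $u$ and $v$'' package — in particular checking that the normalization $\|f_{\alpha,\zeta}\|=1$ survives (it does, since $1=f_{\alpha,\zeta}(v)$ is a boundary value of modulus $1$ and $T_\alpha\subset\overline{D}(\frac12;\frac12)$ keeps everything else strictly inside), and that the image inclusions are preserved exactly because both $g_\beta$ and $\mathbb{Z}^\alpha$ fix $0$ and $1$ and map $D(\frac12;\frac12)$ into $D(\frac12;\frac12)$. Everything else is a direct substitution into the formulas already recorded in Section 2 and Lemma 3.2.
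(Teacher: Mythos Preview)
Your proposal is correct and follows essentially the same route as the paper's own proof: start from the conformal homeomorphism of Theorem~6.2(c), post-compose with the linear fractional self-map of $\overline{D}(\tfrac12;\tfrac12)$ from Lemma~3.2(3.2.5)--(3.2.6) to force $g_{\alpha,\zeta}(z_0)\in D(0;\zeta^{1/\alpha})$, and then apply $\mathbb{Z}^\alpha$ and read off $|f_{\alpha,\zeta}(z_0)|=|g_{\alpha,\zeta}(z_0)|^\alpha<\zeta$. If anything, your write-up is more careful than the paper's in verifying that the ``conformal homeomorphism relative to $u$ and $v$'' package (norm one, image in $\{0,1\}\cup D(\tfrac12;\tfrac12)$, correct values at $u,v$) is preserved under composition with $g_\beta$ and with $\mathbb{Z}^\alpha$ via the Teardrop Lemma.
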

\begin{proof}
By Theorem 6.2 (c), there exists $g\in A(K)$ such that g is a conformal
homeomorphism relative to $u$ and $v$. Note that $g(z_{0})\in D(\frac{1}{2};\frac{1}{2})$.

Note that $\zeta$ is real, positive and 0$<\zeta<\frac{1}{2}$ ,
and that $\mathbb{Z}^{\frac{1}{\alpha}}$ is defined for all $\alpha>0$,
so $\zeta^{\frac{1}{\alpha}}$ = $\mathbb{Z}^{\frac{1}{\alpha}}$($\zeta$)$\searrow0$
as $\alpha\searrow0$.

By Lemma 3.2, part (3.2.6), there exists $\beta_{\alpha\text{,}\varsigma}$
sufficiently close to 0 such that the lft,

$F_{\alpha\text{,}\zeta\text{ }}(z)=\frac{\beta_{\alpha\text{,}\varsigma}z}{\text{(1 -}\beta_{\alpha\text{,}\varsigma}\text{ ) - (1 - 2}\beta_{\alpha\text{,}\varsigma})z}$,
$z\in\overline{D}(\frac{1}{2};\frac{1}{2})$, satisfies $F_{\alpha\text{,}\zeta\text{ }}(g(z_{0}))\in D(0;\zeta^{\frac{1}{\alpha}})$.
Clearly, $g_{\alpha\text{,}\zeta}:=F_{\alpha\text{,}\zeta\diamond\text{ }}g$
is the sought-after function stated in the theorem. Finally, since
$g_{\alpha\text{,}\zeta}(z_{0})\in D(0;\zeta^{\frac{1}{\alpha}})\cap D(\frac{1}{2};\frac{1}{2})$,
it has the form $g_{\alpha\text{,}\zeta}(z_{0})=re^{\text{i}\theta}$,
where $0<r<\zeta^{\frac{1}{\alpha}}$ and $0\leq|\theta|<\frac{\pi}{2}$.
Thus, we have $f_{\alpha\text{,}\zeta}(z_{0})=\mathbb{Z}^{\alpha}\circ g_{\alpha\text{,}\zeta}(z_{0})=\mathbb{Z}^{\alpha}(re^{\text{i}\theta})=r^{\alpha}e^{\text{i}\alpha\theta},$
so $|f_{\alpha\text{,}\zeta}(z_{0})|=|\mathbb{Z}^{\alpha}(re^{\text{i}\theta})=|r^{\alpha}e^{\text{i}\alpha\theta}|=r^{\alpha}<\zeta$.
Hence, $f_{\alpha\text{,}\zeta}(z_{0})\in D(0;\zeta)\cap D(\frac{1}{2};\frac{1}{2})$
as claimed. \end{proof}
\begin{lem}
We claim that
\begin{lyxlist}{00.00.0000}
\item [{\emph{(a)}}] For every conformal homeomorphism, $f$, relative
to $u$ and $v$, for every $z_{0}\in\partial K$ distinct from $u$
and $v$, for every $0<\epsilon<\frac{1}{2}$, for every $0<\delta<\frac{1}{2}$,
there exists $0<\alpha_{0}<\frac{1}{2}$ such that for every $0<\alpha\leq\alpha_{0}$,
$f_{\epsilon\text{, }\delta\text{,}\alpha}:=\mathbb{Z}^{\alpha}\circ f$
is a conformal homeomorphism relative to $u$ and $v$ satisfying
\[
f_{\epsilon\text{, }\delta\text{,}\alpha}(K\backslash D(u;\epsilon))\subset D(1;\delta).
\]

\item [{\emph{(b)}}] In addition to the hypotheses in (a), let $\theta$
be any number such that 0$\leq|\theta|<\frac{\pi}{2}$. Then there
exists $\alpha_{\theta}$ such that, for all $\alpha$ satisfying
0$<\alpha\leq$min\{$\alpha_{0}$, $\alpha_{\theta}$\}, the function
$f_{\epsilon\text{, }\delta\text{,}\alpha}$ in (a) also satisfies
$|arg(f_{\epsilon\text{, }\delta\text{,}\alpha}(z)|<\theta$ for all
$z\in K\backslash\left\{ z_{0}\right\} $.
\end{lyxlist}
\end{lem}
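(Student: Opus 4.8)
The plan is to reduce everything to the ``Teardrop'' Lemma (Lemma 2.2) applied to the image $f(K)$. Recall that $f$ is a conformal homeomorphism relative to $u$ and $v$, so $f(K)\subset\{0,1\}\cup D(\tfrac12;\tfrac12)$, $f(u)=0$, $f(v)=1$, and $f$ is a homeomorphism of $K$ onto $\overline{D}(\tfrac12;\tfrac12)$. For part (a), I would first observe that $\mathbb{Z}^{\alpha}\circ f$ is again a conformal homeomorphism relative to $u$ and $v$: indeed $\mathbb{Z}^{\alpha}$ is a homeomorphism of $\mathbb{C}\setminus(-\infty,0)$ onto $\{0\}\cup C_{\alpha}$, it is conformal off $(-\infty,0]$, it fixes $0$, it sends $1$ to $1$, and by Lemma 2.2 (items (2.2.4)--(2.2.5)) it carries the teardrop $\overline{D}(\tfrac12;\tfrac12)$ into $\{0,1\}\cup\bigl(C_{\alpha/2}\cap D(\tfrac12;\tfrac12)\bigr)\subset\{0,1\}\cup D(\tfrac12;\tfrac12)$. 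So the composition still lands in the right target set, still maps $u\mapsto 0$, $v\mapsto 1$, and is still a conformal homeomorphism on $\mathrm{int}(K)$; norm one is automatic since $1$ is in the image.

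The substantive point in (a) is the inclusion $f_{\epsilon,\delta,\alpha}(K\setminus D(u;\epsilon))\subset D(1;\delta)$. Here I would use continuity of $f^{-1}$ together with compactness: since $f$ is a homeomorphism and $f(u)=0$, the set $f\bigl(K\setminus D(u;\epsilon)\bigr)$ is a compact subset of $\overline{D}(\tfrac12;\tfrac12)$ that avoids a neighborhood of $0$ in $\overline{D}(\tfrac12;\tfrac12)$; concretely, by continuity of $f^{-1}$ at $0$ there is $\rho=\rho(\epsilon)$ with $0<\rho<\tfrac12$ such that $f^{-1}\bigl(\overline{D}(0;\rho)\cap\overline{D}(\tfrac12;\tfrac12)\bigr)\subset D(u;\epsilon)$, hence $f\bigl(K\setminus D(u;\epsilon)\bigr)\subset \overline{D}(\tfrac12;\tfrac12)\setminus \overline{D}(0;\rho)$. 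Now I apply Lemma 2.2 (2.2.6) with this $\rho$ and with $\delta$: there is $\alpha_{\rho,\delta}$ such that for every $0<\alpha\le\alpha_{\rho,\delta}$, $\mathbb{Z}^{\alpha}$ maps $\overline{D}(\tfrac12;\tfrac12)\setminus\overline{D}(0;\rho)$ into $D(1;\delta)$. Setting $\alpha_0:=\min\{\alpha_{\rho,\delta},\tfrac12\}$ gives exactly the required inclusion for all $0<\alpha\le\alpha_0$.

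For part (b), I would again lean on Lemma 2.2: item (2.2.7) says that for any $0<\theta<\tfrac\pi2$ there is $\alpha_{\rho,\delta,\theta}\le\alpha_{\rho,\delta}$ so that for $\alpha$ that small, $|\arg(z)|<\theta$ for every $z\in T_\alpha=\mathbb{Z}^{\alpha}(\overline{D}(\tfrac12;\tfrac12))$. Since $f(K\setminus\{z_0\})\subset\overline{D}(\tfrac12;\tfrac12)$ (in fact $f(K)\subset\overline{D}(\tfrac12;\tfrac12)$ entirely), we get $f_{\epsilon,\delta,\alpha}(K\setminus\{z_0\})\subset T_\alpha$, whence $|\arg(f_{\epsilon,\delta,\alpha}(z))|<\theta$ for all such $z$; one must only note that the argument is well-defined here because the relevant points lie in $C_{\alpha/2}\subset\mathbb{C}\setminus(-\infty,0]$ by (2.2.4) (the value $0$ would occur only at $z=z_0=u$, which is excluded). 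Taking $\alpha_\theta:=\alpha_{\rho,\delta,\theta}$ and then $\alpha\le\min\{\alpha_0,\alpha_\theta\}$ finishes (b).

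The main obstacle — or rather the only place care is needed — is the uniformity step in (a): passing from ``$f(u)=0$ and $f$ a homeomorphism'' to an explicit radius $\rho=\rho(\epsilon)$ such that the complement of the $\epsilon$-ball around $u$ maps outside $\overline{D}(0;\rho)$. This is where continuity of $f^{-1}$ (equivalently, compactness of $K\setminus D(u;\epsilon)$ and injectivity of $f$) is essential; without it the inclusion into $D(1;\delta)$ could fail because $\mathbb{Z}^{\alpha}$ only contracts toward $1$ away from a fixed annular neighborhood of $0$. Everything else is a bookkeeping assembly of the already-established properties of $\mathbb{Z}^{\alpha}$ from Section 2 and the structure of conformal homeomorphisms relative to $u$ and $v$ from Theorem 6.2(c).
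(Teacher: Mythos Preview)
Your proposal is correct and follows essentially the same route as the paper: both arguments use the homeomorphism property of $f$ to find a radius $\rho>0$ with $f(K\setminus D(u;\epsilon))\subset\overline{D}(\tfrac12;\tfrac12)\setminus D(0;\rho)$, then invoke the Teardrop Lemma items (2.2.6) and (2.2.7) for parts (a) and (b) respectively. One small slip: in your remark on (b) you write ``$z=z_0=u$'', but by hypothesis $z_0\neq u$; the point where $\arg$ fails is $z=u$ (since $f_{\epsilon,\delta,\alpha}(u)=0$), and the exclusion of $z_0$ in the lemma's statement is evidently a typo for $u$---your argument in fact gives the bound for all $z\in K\setminus\{u\}$.
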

\begin{proof}
(a)

Since $f$ is a homeomorphism on $K$, it follows that $f(D(u;\epsilon))$
is a relatively open nhd of 0 in $f(K)$. Hence, theres exists $\rho_{0}>0$
such that the relatively open disk, $D(0;\rho_{0})\cap f(K)$, in
$f(K)$ satisfies $D(0;\rho_{0})\cap f(K)\subset f(D(u;\epsilon)$.
Furthermore, $f(K\backslash D(u;\epsilon))=\overline{D}(\frac{1}{2};\frac{1}{2})\backslash f(D(u;\epsilon))$.

By Lemma 2.2 (``teardrop'' lemma), part (2.2.6), for all 0$<\rho<\frac{1}{2}$
and all 0$<\delta<\frac{1}{2}$, there exists 0$<\alpha_{\rho\text{,}\delta}<\frac{1}{2}$
such that for all 0$<\alpha\leq\alpha_{\rho\text{,}\delta}$, $\mathbb{Z}^{\alpha}(\overline{D}(\frac{1}{2};\frac{1}{2})\backslash D(0;\rho))\subset D(1;\delta)$.

Since $\overline{D}(\frac{1}{2};\frac{1}{2})\backslash D(0;\rho_{0})\supset\overline{D}(\frac{1}{2};\frac{1}{2})\backslash f(D(u;\epsilon)$,
we have, for all 0$<\alpha\leq\alpha_{\rho_{0}\text{,}\delta}$,

\[
\mathbb{Z}^{\alpha}(\overline{D}(\frac{1}{2};\frac{1}{2})\backslash f(D(u;\epsilon))\subset\mathbb{Z}^{\alpha}(\overline{D}(\frac{1}{2};\frac{1}{2})\backslash D(0;\rho_{0}))\subset D(1;\delta).
\]
Setting $f_{\epsilon\text{, }\delta_{0}\text{,}\alpha}:=\mathbb{Z}^{\alpha}\circ f$,
we have: $f_{\epsilon\text{, }\delta_{0}\text{,}\alpha}$ is a conformal
homeomorphism relative to $u$ and $v$, and
\begin{eqnarray*}
f_{\epsilon\text{, }\delta\text{,}\alpha}(K\backslash D(u;\epsilon)) & = & \mathbb{Z}^{\alpha}\circ f(K\backslash D(u;\epsilon))\\
 & = & \mathbb{Z}^{\alpha}(f(K\backslash D(u;\epsilon)))\\
 & = & \mathbb{Z}^{\alpha}(\overline{D}(\frac{1}{2};\frac{1}{2})\backslash f(D(u;\epsilon))\subset D(1;\delta).
\end{eqnarray*}

(b)

This follows by application of Lemma 2.2 (``teardrop''), part (2.2.7).\end{proof}
\begin{lem}
For every distinct pair of points $u$ and $v$ in $\partial K_{\text{I}}\cap\mathcal{P}_{\text{ca}}(K)$,
for every $z_{0}\in\partial K_{\text{I}}$ distinct from $u$ and
$v$, for every $0<\zeta<\frac{1}{2}$, for every $0<\epsilon<\frac{1}{2}$,
for every $0<\delta<\frac{1}{2}$, and for every $\theta$ such that
$0\leq|\theta|<\frac{\pi}{2}$, there exists a conformal homeomorphism,
$f{}_{u,v}$, relative to $u$ and $v$ such that:
\begin{enumerate}
\item $\|f{}_{u,v}\|=1$
\item $f_{u,v}(z)=0$ iff $z=u$
\item $f_{u,v}(z)=$1 iff $z=v$
\item $f_{u,v}(z_{0})\in D(0;\zeta)$
\item $f_{u,v}(K\backslash D(u;\epsilon))\subset D(1;\delta)$
\item $|\mathrm{arg}(f_{u,v}(z)|<|\theta|$ for all $z\in K\backslash\left\{ z_{0}\right\} $.
\end{enumerate}
\end{lem}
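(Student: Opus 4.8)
The plan is to combine Lemma 7.3 and Lemma 7.4 via a diagonal-type choice of the root exponent $\alpha$. First I would invoke Lemma 7.3 to produce, for the given $\zeta$, a conformal homeomorphism $g_{\alpha,\zeta}$ relative to $u$ and $v$ whose value at $z_0$ lies in $D(0;\zeta^{1/\alpha})$, together with the observation that $f_{\alpha,\zeta}:=\mathbb{Z}^{\alpha}\circ g_{\alpha,\zeta}$ is again a conformal homeomorphism relative to $u,v$ with $f_{\alpha,\zeta}(z_0)\in D(0;\zeta)\cap D(\tfrac12;\tfrac12)$. This already secures properties (1)--(4): the normalization $\|f_{\alpha,\zeta}\|=1$ and the injectivity statements $f_{\alpha,\zeta}(z)=0\iff z=u$, $f_{\alpha,\zeta}(z)=1\iff z=v$ are part of the definition of ``conformal homeomorphism relative to $u$ and $v$'' (Definition 7.1), and (4) is the last clause of Lemma 7.3.

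Next I would bring in Lemma 7.4, applied \emph{not} to the fixed original conformal homeomorphism but to $g:=g_{\alpha_1,\zeta}$ obtained from Lemma 7.3 for some provisional $\alpha_1$ — equivalently, since Lemma 7.4(a) holds for an \emph{arbitrary} conformal homeomorphism relative to $u,v$, I may simply take the base map in Lemma 7.4 to be $g_{\alpha,\zeta}$ itself. The subtlety is that composing with $\mathbb{Z}^{\alpha}$ twice is the same as composing once with $\mathbb{Z}^{\alpha^2}$ up to a rescaling that stays inside $\overline D(\tfrac12;\tfrac12)$; but it is cleaner to proceed in one stroke: I would directly verify that for a \emph{single} small $\alpha$ the map $f_{\alpha,\zeta}=\mathbb{Z}^{\alpha}\circ g_{\alpha,\zeta}$ already satisfies (5) and (6) as well, because (5) is exactly the conclusion of Lemma 7.4(a) with $f$ replaced by $g_{\alpha,\zeta}$ (note $g_{\alpha,\zeta}(z)=0\iff z=u$, so $g_{\alpha,\zeta}(D(u;\epsilon))$ is a relatively open neighborhood of $0$), and (6) is the conclusion of Lemma 7.4(b), the angular control coming from part (2.2.7) of the Teardrop Lemma.

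The one genuine bookkeeping point — and the step I expect to be the main obstacle — is that Lemmas 7.3 and 7.4 each impose an upper bound on $\alpha$, and the bound in Lemma 7.4 (coming from $\rho_0$, which depends on the map $f$ and hence on $\zeta$ and on the already-chosen exponent) must be shown to be compatible with the bound in Lemma 7.3. I would handle this by first fixing $\zeta$ and applying Lemma 7.3 to obtain the family $\{g_{\alpha,\zeta}\}_{0<\alpha<1/2}$; crucially the relatively open disk radius $\rho_0$ in the proof of Lemma 7.4(a) can be chosen \emph{uniformly} in $\alpha$ once $\epsilon$ is fixed, since $g_{\alpha,\zeta}$ is built as $F_{\alpha,\zeta}\circ g$ with $g$ a fixed conformal homeomorphism and $F_{\alpha,\zeta}$ a linear fractional self-map of $\overline D(\tfrac12;\tfrac12)$ fixing $0$ and $1$ — such maps distort neighborhoods of $0$ in a controlled way (Lemma 3.2(3.2.5)--(3.2.6)), so $g_{\alpha,\zeta}(D(u;\epsilon))$ contains a fixed relatively open disk $D(0;\rho_0)\cap g_{\alpha,\zeta}(K)$. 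With $\rho_0$ uniform, the Teardrop Lemma (2.2.6)--(2.2.7) yields a single threshold $\alpha^{*}=\alpha^{*}(\rho_0,\delta,\theta)$; then choosing $\alpha\le\min\{\alpha^{*},\alpha_{\rho,\delta},\tfrac12\}$ and invoking Lemma 7.3 once more at that $\alpha$ gives the desired $f_{u,v}:=\mathbb{Z}^{\alpha}\circ g_{\alpha,\zeta}$ satisfying all six conditions simultaneously. The remaining verifications — that $\|f_{u,v}\|=1$ (the teardrop $T_\alpha\subset\overline D(\tfrac12;\tfrac12)$, so the sup norm is still attained and equals $1$ at $z=v$) and that zeros/ones of $f_{u,v}$ occur only at $u$/$v$ (because $\mathbb{Z}^{\alpha}$ is injective on $\overline D(\tfrac12;\tfrac12)$ and fixes $0$ and $1$) — are routine and I would state them briefly.
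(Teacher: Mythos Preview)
Your strategy---combine Lemmas 7.3 and 7.4 through a single choice of root exponent $\alpha$---is exactly the paper's: its entire proof reads ``All of the assertions are straightforward consequences of the facts proven in Lemmas 7.3 and 7.4.'' You go further than the paper by identifying the genuine issue, namely that the $\alpha$-threshold in Lemma 7.4 depends on the base map, while the base map $g_{\alpha,\zeta}$ produced by Lemma 7.3 already depends on $\alpha$.

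Your proposed resolution, however, does not work. You claim the radius $\rho_0$ in the proof of Lemma 7.4(a) can be chosen uniformly in $\alpha$, on the grounds that $g_{\alpha,\zeta}=F_{\alpha,\zeta}\circ g$ with $F_{\alpha,\zeta}$ an lft of $\overline D(\tfrac12;\tfrac12)$ fixing $0$ and $1$ and hence ``distorting neighborhoods of $0$ in a controlled way.'' But the construction in Lemma 7.3 forces $|F_{\alpha,\zeta}(g(z_0))|<\zeta^{1/\alpha}\to 0$, which forces the parameter $\beta_{\alpha,\zeta}\to 0$; and by Lemma 3.2(3.2.6), as $\beta\searrow 0$ the lft $F_\beta$ sends \emph{every} point of $D(\tfrac12;\tfrac12)$ to $0$, not just $g(z_0)$. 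Hence for any $z\in K\setminus D(u;\epsilon)$ with $z\neq v$ one has $g_{\alpha,\zeta}(z)=F_{\alpha,\zeta}(g(z))\to 0$ as $\alpha\to 0$, so $g_{\alpha,\zeta}(K\setminus D(u;\epsilon))$ cannot stay outside any fixed disk $D(0;\rho_0)$, and your diagonal choice of $\alpha$ collapses. This is not a mere technical glitch: whenever $z_0\notin D(u;\epsilon)$ (which the hypotheses permit, since $\epsilon$ and $z_0$ are prescribed independently), conditions (4) and (5) would place $f_{u,v}(z_0)$ simultaneously in the disjoint disks $D(0;\zeta)$ and $D(1;\delta)$. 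So the lemma as literally stated cannot hold in that generality, and no argument---yours or the paper's one-liner---can establish it without an additional restriction (for instance $z_0\in \overline{D}(u;\epsilon)$, which is how the lemma is effectively used in Theorem 7.7).
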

\begin{proof}
All of the assertions are straightforward consequences of the facts
proven in Lemmas 7.3 and 7.4.\end{proof}
\begin{lem}
Let $K\in\mathcal{K}$ and $z_{0}\in(\partial K)_{\text{ni}}$ (=
the set of non-isolated points of $K$). 

Then:

Every peaking function at $z_{0}$ is non-constant, and the following
are equivalent:
\begin{lyxlist}{00.00.0000}
\item [{\emph{(A)}}] $z_{0}$ is a peak point for $A(K)$. 
\item [{\emph{(B)}}] There exists $f\in A(K)$ such that $f(z)=0$ iff
$z=z_{0}$, $f(K)\subset\left\{ 0\right\} \cup D(\frac{1}{2};\frac{1}{2})$,
and there exists $z\in K\backslash\left\{ z_{0}\right\} $ such that
$f(z)\in D(\frac{1}{2};\frac{1}{2})$. 
\item [{\emph{(C)}}] There exists $f\in A(K)$ such that $f(z)=0$ iff
$z=z_{0}$ , $f(K)\subset\left\{ 0\right\} \cup C_{\frac{1}{2}}$,
and there exists $z\in K\backslash\left\{ z_{0}\right\} $ such that
$f(z)\in C_{\frac{1}{2}}$.
\end{lyxlist}
\end{lem}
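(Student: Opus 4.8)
The plan is to prove the chain of implications $(A)\Rightarrow(B)\Rightarrow(C)\Rightarrow(A)$, together with the preliminary remark that a peaking function at a non-isolated point must be non-constant. The latter is immediate: if $f\in A(K)$ peaks at $z_0$ and $f$ were constant, then $f\equiv f(z_0)=1$, which forces $|f(w)|=1$ for all $w$, contradicting $|f(w)|<1$ for $w\neq z_0$ (here $z_0$ non-isolated guarantees such a $w$ exists in $K$).

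For $(A)\Rightarrow(B)$: given a peaking function $p$ at $z_0$ with $\|p\|=1$, $p(z_0)=1$, and $|p(w)|<1$ for $w\in K\setminus\{z_0\}$, I would first replace $p$ by $1-p$, which vanishes exactly at $z_0$ and maps $K$ into $\overline D(1;1)$ — not quite the teardrop disk. To land inside $\overline D(\tfrac12;\tfrac12)$, compose with a suitable map: note $1-p$ takes values in the closed disk of radius $1$ about $1$, so applying a contraction/affine map won't suffice in general since the image may touch the whole boundary circle. A cleaner route is to use that $\overline D(\tfrac12;\tfrac12)$ is the image of $\overline D(0;1)$ under $z\mapsto\tfrac12(1+z)$, apply this to $p$ itself (not $1-p$): set $f=\tfrac12(1+p)$ — no, that sends $z_0$ to $1$, not $0$. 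Instead use $f=\tfrac12(1-p)$: then $f(z_0)=0$, $\|f\|\leq 1$, and $f(K)\subset\overline D(\tfrac12;\tfrac12)$ since $1-p$ lands in $\overline D(0;1)$ — wait, $|1-p(w)|$ can be as large as $2$. The correct observation: $p(K)\subset\overline D(0;1)$ with $p(z_0)=1$ and $p(w)\in D(0;1)$ for $w\neq z_0$, so $\tfrac12(1-p)$ maps $z_0\mapsto 0$ and maps $K\setminus\{z_0\}$ into $\tfrac12(1-D(0;1))=D(\tfrac12;\tfrac12)$, and $\|\tfrac12(1-p)\|\le 1$ with the value $0$ attained only at $z_0$ (since $p(w)=1$ only at $z_0$). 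That gives $f(K)\subset\{0\}\cup D(\tfrac12;\tfrac12)$ and $f(z)=0$ iff $z=z_0$; the existence of $z\in K\setminus\{z_0\}$ with $f(z)\in D(\tfrac12;\tfrac12)$ follows from $z_0$ being non-isolated.

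For $(B)\Rightarrow(C)$: start with $f$ as in (B) and apply $\mathbb Z^{\alpha}$ for small $\alpha$. Since $f(K)\subset\{0\}\cup D(\tfrac12;\tfrac12)$ and $D(\tfrac12;\tfrac12)\subset C_1=\mathbb C\setminus(-\infty,0]$, the composition $\mathbb Z^{\alpha}\circ f$ is defined and lies in $A(K)$; by the Teardrop Lemma (2.2.4)–(2.2.7), for $\alpha$ small enough $\mathbb Z^{\alpha}(D(\tfrac12;\tfrac12))\subset C_{\alpha/2}\subset C_{1/2}$, and $\mathbb Z^{\alpha}(0)=0$ with $\mathbb Z^{\alpha}(w)=0$ only when $w=0$; thus $g:=\mathbb Z^{\alpha}\circ f$ satisfies $g(z)=0$ iff $z=z_0$ and $g(K)\subset\{0\}\cup C_{1/2}$, and the point witnessing $f(z)\in D(\tfrac12;\tfrac12)$ maps to a point in $C_{1/2}$. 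For $(C)\Rightarrow(A)$: given $g$ with $g(K)\subset\{0\}\cup C_{1/2}$ vanishing only at $z_0$, the image lies in the open half-plane $\{\operatorname{Re}>0\}\cup\{0\}$ (since $C_{1/2}=\{|\arg w|<\pi/2\}$), so $1/(1+g)$ — or better, a Möbius/affine normalization — produces a bona fide peak function: precisely, $w\mapsto w/(1+|w|^2)$-type estimates, or simply observe $h:=1-\frac{2g}{1+g}$ maps the half-plane into the disk fixing the needed points. The cleanest is: the map $w\mapsto\frac{1-w}{1+w}$ sends $\{\operatorname{Re}w>0\}$ into $D(0;1)$ and $0\mapsto 1$; so $h:=\frac{1-g}{1+g}\in A(K)$ satisfies $h(z_0)=1$, $\|h\|\le 1$, and $|h(w)|<1$ for $w\neq z_0$ (since $g(w)\in C_{1/2}$ has positive real part there), i.e. $h$ peaks at $z_0$.

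The main obstacle I anticipate is getting the normalizations in $(A)\Rightarrow(B)$ exactly right — ensuring the image genuinely lands in the \emph{open} teardrop disk off $z_0$ rather than merely its closure, and that no other point maps to $0$ — which hinges on the precise statement of "peak point" (strict inequality $|p(w)|<1$ off $z_0$). The $(C)\Rightarrow(A)$ step is routine once one picks the right linear fractional map carrying the cone $C_{1/2}$ (the right half-plane) biholomorphically to the disk with $0\mapsto 1$; the Schwarz-type strict inequality on the boundary-less interior is automatic. The $(B)\Rightarrow(C)$ step is essentially a direct citation of the Teardrop Lemma with the one subtlety that one must verify $g=\mathbb Z^\alpha\circ f$ is continuous on $K$ and holomorphic on $\operatorname{int}(K)$ — which holds because $\mathbb Z^\alpha$ is a homeomorphism on $\mathbb C\setminus(-\infty,0)$ and conformal on $\mathbb C\setminus(-\infty,0]$, and $f(K)$ avoids $(-\infty,0)$ except at $0$.
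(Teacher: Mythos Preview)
Your proposal is correct, and the step $(A)\Rightarrow(B)$ matches the paper exactly: the paper also sets $f=\tfrac12(1-p)$ for a peaking function $p$ and checks $|f(z)-\tfrac12|=\tfrac12|p(z)|<\tfrac12$ for $z\neq z_0$. The other two implications, however, diverge from the paper's route.

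For $(B)\Rightarrow(C)$ you pass through $\mathbb{Z}^\alpha$ and the Teardrop Lemma, but this machinery is not needed: the paper simply observes that $D(\tfrac12;\tfrac12)\subset C_{1/2}$ (the open right half-plane), so the \emph{same} $f$ from (B) already witnesses (C). Your detour is valid but adds an extra verification (that $\mathbb{Z}^\alpha\circ f\in A(K)$, which requires noting $f(\operatorname{int}K)$ avoids $(-\infty,0]$ since $z_0\in\partial K$) that the paper sidesteps entirely with a one-line inclusion.

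For $(C)\Rightarrow(A)$ you use the Cayley-type map $h=\frac{1-g}{1+g}$ sending $C_{1/2}$ into $D(0;1)$ with $0\mapsto 1$. The paper instead first rescales ($g\mapsto\frac{1}{9\|g\|}g$) to land in $D(0;\tfrac19)\cap C_{1/2}$, applies $\mathbb{Z}^{1/2}$ to shrink the cone to $C_{1/4}$ with modulus below $\tfrac13$, and then sets $F=1-h$; an elementary geometric estimate gives $|F|<1$ off $z_0$. Your M\"obius argument is shorter and avoids the root function here, while the paper's version stays thematically consistent with its use of $\mathbb{Z}^\alpha$ throughout. Both arguments are sound.
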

\begin{proof}
Since $z_{0}$ is non-isolated, it is clear from the definition of
peak point that every peaking function at $z_{0}$ must be non-constant.

(A)$\Rightarrow$(B)

There exists $g\in A(K)$ such that $g(z_{0})=1$ and $|g(z)|<1$
for all $z\in K\backslash\left\{ z_{0}\right\} $. Put $f=\frac{1}{2}(1-g)$.
Then $f(z_{0})=\frac{1}{2}(1-g(z_{0}))=0$, and for $z\in K\backslash\left\{ z_{0}\right\} $,
$|f(z)-\frac{1}{2}|=|-\frac{1}{2}g(z)|<\frac{1}{2}$, so $f(z)\in D(\frac{1}{2};\frac{1}{2})$,
and thus $f(K)\subset\left\{ 0\right\} \cup D(\frac{1}{2};\frac{1}{2})$.
Further, since $g$, and hence, $f$, are non-constant, there exists
$z\in K\backslash\left\{ z_{0}\right\} $ such that $f(z)\in D(\frac{1}{2};\frac{1}{2})$.

(B)$\Rightarrow$(C)

This is immediate since $\left\{ 0\right\} \cup D(\frac{1}{2};\frac{1}{2})\subset\left\{ 0\right\} \cup C_{\frac{1}{2}}$.

(C)$\Rightarrow$(A)

We are given $f\in A(K)$ such that $f(z)=0$ iff $z=z_{0}$ and $f(K)\subset\left\{ 0\right\} \cup C_{\frac{1}{2}}$.
The function, $g=\frac{1}{9\|\text{f}\|}f\in A(K)$, satisfies $g(z)=0$
iff $z=z_{0}$, and $g(K)\subset\left\{ 0\right\} \cup(D(0;\frac{1}{9})\cap C_{\frac{1}{2}})$.
Next, $h=\mathbb{Z}^{\frac{1}{2}}\circ g\in A(K)$, satisfies $h(z)=0$
iff $z=z_{0}$, and $h(K)\subset\left\{ 0\right\} \cup(D(0;\frac{1}{3})\cap C_{\frac{1}{4}})$.
Finally, the function $F=1-h\in A(K)$, satisfies $\|F\|=1$, $F(z)=1$
iff $z=z_{0}$, and $|$$F(z)|<1$ for all $z\in K\backslash\left\{ z_{0}\right\} $,
and so $z_{0}$ is a peak point for $A(K)$.\end{proof}
\begin{thm}
Let $K\in\mathcal{K}$ with $\int(K)\neq\emptyset$. For every $z_{0}\in\partial K_{\text{I}}$,
for every distinct sequence, $(z_{n})_{n\geq1}$, in $\partial K_{\text{I}}\cap\mathcal{P}_{\text{ca}}(K)$
such that $|z_{n}-z_{0}|\searrow0$, \ there exists a $(0,1)$ sequence
in $A(K)$ with respect to $z_{0}$ and $(z_{n})_{n\boldsymbol{\geq1}}$.\end{thm}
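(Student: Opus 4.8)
The plan is to build the $(0,1)$ sequence $(f_i)$ one function at a time, where $f_i$ must vanish exactly at $z_0$, equal $1$ exactly at $z_i$, have norm $1$, and map the rest of $K$ (off the countable set $\{z_0,z_1,z_2,\dots\}$) into $C_{1/8}\cap D(\tfrac12;\tfrac12)$. The natural building blocks are the conformal homeomorphisms relative to the pair $(z_0,z_i)$ supplied by Theorem~6.2(c) and sharpened in Lemmas~7.3--7.5: for fixed $i$, such a $g$ sends $z_0\mapsto 0$, $z_i\mapsto 1$, and all of $K\setminus\{z_0,z_i\}$ into $D(\tfrac12;\tfrac12)$. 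The catch is that condition (3) of Definition~7.2 demands a single function that is well-behaved at \emph{all} the other $z_j$ simultaneously, not just at one $z_i$; a raw conformal homeomorphism relative to $(z_0,z_i)$ will generally send the other $z_j$ somewhere in $D(\tfrac12;\tfrac12)$ but gives no control of the argument. So the real content is to compress the whole teardrop into the thin cone $C_{1/8}$ using the $\alpha$-th root function.

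First I would fix $i$ and invoke Lemma~7.5 with $u=z_0$, $v=z_i$, and with the target aperture parameter $\theta$ chosen so that $\theta<\tfrac{\pi}{8}$ (which forces $|\arg(\cdot)|<\tfrac{\pi}{8}$, i.e.\ the image lies in $C_{1/8}$), together with auxiliary choices of $\zeta,\epsilon,\delta$ that are essentially free here. Lemma~7.5 then produces a conformal homeomorphism $f_{z_0,z_i}$ relative to $(z_0,z_i)$ with $\|f_{z_0,z_i}\|=1$, vanishing only at $z_0$, equal to $1$ only at $z_i$, and with $|\arg(f_{z_0,z_i}(z))|<\tfrac{\pi}{8}$ for every $z\in K\setminus\{z_0\}$ --- in particular at every $z_j$, $j\ne 0$. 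Setting $f_i:=f_{z_0,z_i}$, conditions (1) and (2) of Definition~7.2 are immediate: $f_i(z_0)=0$ and $f_i(z_i)=1$ give the Kronecker-delta values (for $j\notin\{0,i\}$ we have $f_i(z_j)\in D(\tfrac12;\tfrac12)$ by the homeomorphism property, so in particular $f_i(z_j)\ne 0$ and $f_i(z_j)\ne 1$), and $\|f_i\|=1$. For condition (3), any $z\in K\setminus\{z_0,z_1,z_2,\dots\}$ is in particular in $K\setminus\{z_0,z_i\}$, hence $f_i(z)\in D(\tfrac12;\tfrac12)$, and it is in $K\setminus\{z_0\}$, hence $|\arg f_i(z)|<\tfrac{\pi}{8}$, so $f_i(z)\in C_{1/8}\cap D(\tfrac12;\tfrac12)$ as required. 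Doing this for each $i\ge 1$ yields the sequence.

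The main obstacle --- really the only nontrivial point --- is verifying that Lemma~7.5 is genuinely applicable: it requires $z_0,u=z_0$... (no), it requires the pair $u,v$ to lie in $\partial K_{\text{I}}\cap\mathcal{P}_{\text{ca}}(K)$ and the point $z_0$ to lie in $\partial K_{\text{I}}$ and be distinct from $u$ and $v$. Here $u=z_0\in\partial K_{\text{I}}$ by hypothesis and $v=z_i\in\partial K_{\text{I}}\cap\mathcal{P}_{\text{ca}}(K)$ by hypothesis, and since the sequence $(z_n)$ is distinct and $|z_n-z_0|\searrow 0$ we have $z_0\notin\{z_n\}$, so $z_0\ne z_i$; but note the distinctness point in Lemma~7.5 is applied to a \emph{third} point distinct from $u,v$, and we are using $z_0$ itself in the role of $u$, so what we actually need is the degenerate reading in which the ``$z_0$'' of Lemma~7.5 coincides with $u$. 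I would instead apply Lemma~7.5 with the pair $(u,v)=(z_0,z_i)$ and pick the spare point, say, to be any fixed $z_1$ (when $i\ne 1$) or $z_2$ (when $i=1$) --- it plays no role in conditions (1)--(6) that we actually need except (4), which we discard. So the clean statement is: apply Theorem~6.2(c) to get a conformal homeomorphism relative to $(z_0,z_i)$, then Lemma~7.4(b) with $\epsilon$ small and $\theta<\tfrac{\pi}{8}$ to bend the image into $C_{1/8}$, obtaining $f_i:=\mathbb{Z}^{\alpha}\circ(\text{that homeomorphism})$ for $\alpha$ small enough; the teardrop Lemma~2.2 guarantees $\mathbb{Z}^{\alpha}$ keeps everything inside $D(\tfrac12;\tfrac12)$, and the argument bound from (2.2.7)/Lemma~7.4(b) puts it inside $C_{1/8}$. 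This is exactly the construction packaged as Lemma~7.5, so citing 7.5 with appropriately small parameters suffices, and the verification of Definition~7.2(1)--(3) is then the routine bookkeeping above.
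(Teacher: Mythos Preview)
There are two genuine gaps, and together they explain why the paper's proof looks nothing like yours.

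First, the Kronecker-delta requirement in Definition~7.2(1) says $f_i(z_j)=0$ for \emph{every} $j\ne i$ (with $j\ge 1$), not merely $f_i(z_j)\ne 1$. Your $f_i$ is a conformal \emph{homeomorphism} on $K$, hence injective, so it vanishes at exactly one point of $K$ --- namely $z_0$. It therefore cannot vanish at $z_1,z_2,\dots,z_{i-1},z_{i+1},\dots$ as well, and condition~(1) fails outright. Your parenthetical ``so in particular $f_i(z_j)\ne 0$'' is precisely the opposite of what is needed.

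Second, and more fundamentally, Lemma~7.5 (and Theorem~6.2 on which it rests) requires the pair $(u,v)$ to lie in $\partial K_{\mathrm{I}}\cap\mathcal{P}_{\mathrm{ca}}(K)$: the kissing-path construction in part~(a) of Theorem~6.2 starts from kissing disks at both endpoints. The hypothesis of Theorem~7.7 places $z_0$ only in $\partial K_{\mathrm{I}}$; it is \emph{not} assumed circularly accessible, and indeed the whole point of the theorem is to handle boundary points that need not be ca (or even escape points). You cannot take $u=z_0$.

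The paper circumvents both obstacles simultaneously by working only with the circularly accessible points $z_n$ ($n\ge 1$) and never touching $z_0$ directly. For fixed $j$ it applies Lemma~7.5 to each pair $(u,v)=(z_n,z_j)$ with $n\ne j$, obtaining $f_{n,j}$ with $f_{n,j}(z_n)=0$, $f_{n,j}(z_j)=1$, $\|f_{n,j}\|=1$, controlled argument, and $f_{n,j}(K\setminus D(z_n;\epsilon_n))\subset D(1;\delta^n)$. The finite products $\Pi_p^j=\prod_{n\le p,\,n\ne j}f_{n,j}$ are shown (via Rudin's Lemma~4.4 and the geometric decay $\delta^n$) to be Cauchy in $A(K)$, and the limit $f_j$ then vanishes at every $z_n$ ($n\ne j$) because one factor does, while $f_j(z_0)=0$ follows by \emph{continuity} from $z_n\to z_0$. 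The infinite product is not a cosmetic device; it is exactly what manufactures the countably many zeros demanded by the Kronecker delta and what reaches the possibly non-ca point $z_0$ as a limit.
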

\begin{proof}
In \cite{Bachar1}, Theorem 5.1.ii), it is proved that every type
II boundary point of $K$ is a peak point for $A(K)$. Therefore,
we need only consider type I boundary points of $K$. Let $z_{0}\in\partial K_{\text{I}}$.

Step 1: Selection of various sequences and disks used in the proof.

1.1 Since $\mathcal{P}_{ca}(K)$ is dense in $\partial K$ (proved
in \cite{Bachar1}), we may select any distinct sequence, $(z_{n})_{n\geq1},$
in$\partial K_{\text{I}}$ such that $|z_{n}-z_{0}|\searrow0$.

1.2 Select a distinct positive sequence, $(\epsilon_{n})_{n\geq1}$,
such that $\epsilon_{n}\searrow0$ and such that the sequence, $(\overline{D}(z_{n};\epsilon_{n}))_{n\geq1}$,
of closed disks is pairwise disjoint.

1.3 Select the sequence, $(D(1;\delta^{n}))_{n\geq1}$, where $0<\delta<\frac{1}{2}$
is fixed. Note that $\sum_{n=1,n\neq j}^{\infty}\delta^{n}<\sum_{n=1}^{\infty}\delta^{n}=\frac{\delta}{1\text{ - }\delta}<\frac{1}{2}$
for all $j\geq1$.

1.4 Select a positive sequence, $(\theta_{n})_{n\geq1}$, where $\theta_{n}\searrow$,
0$<\theta_{n}<\frac{\pi}{2}$ and $\sum_{n=1}^{\infty}\theta_{n}<\frac{\pi}{8}$.

1.5 Select a distinct positive sequence, $(\zeta_{n})_{n\geq1}$,
such that $\zeta_{n}\searrow0$.

Step 2: Selection of various sequences of functions in $A(K)$ used
in the proof.

Fix a $j\geq1$. We shall select functions, $f_{n,j}\in A(K)$, for
each $n\neq j$ so that f$_{n,j}(z_{j})=1$ and $f_{n,j}(z_{n})=0$.
Specifically, we use Lemma 7.5 to do this by identifying $z_{j}$
with $v$ and $z_{n}$ with $u$. The result is the sequence, $(f_{i,n})_{n\neq j}$,
in $A(K)$ such that

(1) $\|f_{n,j}\|=1$

(2) $f_{n,j}(z)=0$ iff $z=z_{n}$

(3) $f_{n,j}(z)=1$ iff $z=z_{j}$

(4) $f_{n,j}(z_{0})\in D(0;\zeta_{n})$

(5) $f_{n,j}(K\backslash D(z_{n};\epsilon_{n}))\subset D(1;\delta^{n})$

(6) $|\mathrm{arg}(f_{n,j}(z)|<|\theta_{n}|=\theta_{n}$ for all $z\in K\backslash\left\{ z_{0}\right\} $.

Step 3: For each $j\geq1$, construction of a sequence of finite products
of the $f_{n,j}$'s.

Define the finite products, for each $j\geq1$,

(3.1) $\Pi_{p}^{j}=\prod_{n=1,n\neq j}^{p}f_{n,j}\in A(K)$ for each
$p\geq1$.

Define the infinite sequence of finite products, for each $j\geq1$,

(3.2) $(\Pi_{p}^{j})_{p\geq1}$.

Fix $j\geq1$. For each $N\geq1$ and all $k\geq1$,

(3.3) $\prod_{n=1,n\neq j}^{N+k}-\prod_{n=1,n\neq j}^{N}=\prod_{n=1,n\neq j}^{N}\left[\prod_{n=N+1,n\neq j}^{N+k}-1\right]$

Step 4: Proof that for each $j\geq1$, $(\Pi_{p}^{j})_{p\geq1}$ is
a Cauchy sequence in $A(K)$.

Let $\epsilon>0$. By 1.5, there exists $N_{\epsilon}$ such that
for all $m\geq0$, $0<\zeta_{N_{\epsilon}+m}<\epsilon$. Since (4)
of step 2 shows that $f_{N_{\epsilon},j}(z_{0})\in D(0;\zeta_{N_{\epsilon}})$,
we have that $f_{N_{\epsilon},j}^{\text{-1}}(D(0;\zeta_{N_{\epsilon}})$
is an open nghd of $z_{0}$. Hence, for all

$z\in f_{N_{\epsilon},j}^{\text{-1}}(D(0;\zeta_{N_{\epsilon}})$,
$f_{N_{\epsilon},j}(z)\in D(0;\zeta_{N_{\epsilon}})\subset D(0;\epsilon)$.
Therefore, for all $z\in f_{N_{\epsilon},j}^{\text{-1}}(D(0;\zeta_{N_{\epsilon}})$,
$|f_{N_{\epsilon},j}(z)|<\epsilon$.

For all $k\geq1$ and for all $z\in f_{N_{\epsilon},j}^{\text{-1}}(D(0;\zeta_{N_{\epsilon}})$,
\begin{eqnarray*}
\abs{(\prod_{n=1,n\neq j}^{N_{\epsilon}+k}-\prod_{n=1,n\neq j}^{N_{\epsilon}})(z))} & = & \abs{(\prod_{n=1,n\neq j}^{N_{\epsilon}})(z)}\left[\abs{\prod_{n=N+1,n\neq j}^{N_{\epsilon}+k}(z)-1)}\right]\\
 & = & \prod_{n=1,n\neq j}^{N_{\epsilon}}\abs{f_{n,j}(z)}\abs{\left[\prod_{n=N+1,n\neq j}^{N_{\epsilon}+k}f_{n,j}(z)-1\right]}\\
 & = & \abs{f_{N_{\epsilon},j}(z)}\prod_{n=1,n\neq j}^{N_{\epsilon}\text{ - 1}}\abs{f_{n,j}(z)}\abs{\left[\prod_{n=N+1,n\neq j}^{N_{\epsilon}+k}f_{n,j}(z)-1\right]}\\
 & < & \epsilon\left\Vert \left[\prod_{n=1,n\neq j}^{N_{\epsilon}-1}f_{n,j}\right]\left[\prod_{n=N+1,n\neq j}^{N_{\epsilon}+k}f_{n,j}-1\right]\right\Vert \\
 & < & 2\epsilon.
\end{eqnarray*}

Next, we let $z$ be an arbitrary element of $K\backslash f_{N_{\epsilon},j}^{\text{-1}}(D(0;\zeta_{N_{\epsilon}})$.
Note that there exists

$D(z_{0};\rho)\subset f_{N_{\epsilon},j}^{-1}(D(0;\zeta_{N_{\epsilon}})$.
Hence, $K\backslash f_{N_{\epsilon},j}^{-1}(D(0;\zeta_{N_{\epsilon}})\subset K\backslash D(z_{0};\rho)$.
By (1.2) of step 1, there exist $N_{\rho}$ such that for all $p\geq N_{\rho}$,
and all $n\geq p$, $D(z_{n};\epsilon_{n})\subset D(z_{0};\rho)$.
Thus, $K\backslash f_{N_{\epsilon},j}^{-1}(D(0;\zeta_{N_{\epsilon}})\subset K\backslash D(z_{0};\rho)\subset K\backslash D(z_{n};\epsilon_{n})$
for all $n\geq p$.

Thus, for all $z\in K\backslash f_{N_{\epsilon},j}^{\text{-1}}(D(0;\zeta_{N_{\epsilon}})$
and all $n\geq p$, $z\in K\backslash D(z_{n};\epsilon_{n})$, so
by (5) of step 2,

$f_{n,j}(z)\in D(1;\delta^{n})$, and so {*} $|(f_{n,j}(z)-1|<\delta^{n}$
for all $n\geq p$ {*} .

Therefore, for $k\geq1$,

\begin{eqnarray*}
\abs{\left(\prod_{n=1,n\neq j}^{p+k}-\prod_{n=1,n\neq j}^{p}\right)(z))} & = & \abs{\left(\prod_{n=1,n\neq j}^{p}\right)(z))}\abs{\left[\prod_{n=p+1,n\neq j}^{p+k})(z)-1\right]}\\
 & = & \prod_{n=1,n\neq j}^{p}|f_{n,j}(z)|\abs{\left[\prod_{n=p+1,n\neq j}^{p+k}f_{n,j}(z)-1\right]}\\
 & \leq & \left[\prod_{n=1,n\neq j}^{p}\big\| f_{n,j}(z)\big\|\right]\abs{\left[\prod_{n=p+1,n\neq j}^{p+k}f_{n,j}(z)-1\right]}\\
 & \leq & \abs{\prod_{n=p+1,n\neq j}^{p+k}f_{n,j}(z)-1}\\
 & = & \abs{\prod_{n=p+1,n\neq j}^{p+k}\left[1+\left(f_{n,j}(z)-1\right)\right]-1}\\
 & \leq & \prod_{n=p+1,n\neq j}^{p+k}\left[1+\abs{f_{n,j}(z)-1}\right]-1\textrm{ (by Lemma 4.4)}\\
 & \leq & \exp\left(\sum_{n=p+1,n\neq j}^{p+k}\abs{f_{n,j}(z)-1}\right)-1\textrm{ (by Lemma 4.4)}\\
 & \leq & \exp\left(\sum_{n=p+1,n\neq j}^{p+k}\delta^{n}\right)-1\textrm{ (by * above)}\\
 & \leq & \exp\left(\sum_{n=p+1,n\neq j}^{\infty}\delta^{n}\right)-1
\end{eqnarray*}

Since $\sum_{n=p+1,n\neq j}^{\infty}\delta^{n}$ is the tail end of
a convergent series of positive numbers, $\sum_{n=p+1,n\neq j}^{\infty}\delta^{n}\searrow0$
as $p\rightarrow\infty$.

Furthermore, since $\exp(x)\searrow1$ as $x\searrow0$, it follows
that there exist $p_{\epsilon}$ such that $\exp(\sum_{n=p_{\epsilon}+1,n\neq j}^{\infty}\delta^{n})-1<(1+\epsilon)-1=\epsilon$.

Thus, for $p=p_{\epsilon}$, for all $z\in K\backslash f_{N_{\epsilon},j}^{\text{-1}}(D(0;\zeta_{N_{\epsilon}})$,
and for all $k\geq1$,

$|(\prod_{n=1,n\neq j}^{p_{\epsilon}+k}-\prod_{n=1,n\neq j}^{p_{\epsilon}})(z))|<\epsilon$.

By combining this with the first part of the proof, and letting $n_{\text{ }\epsilon}=\max\left\{ N_{\epsilon},p_{\epsilon}\right\} $,
we have for all $n\geq n_{\epsilon}$, for all $k\geq1$, and for
all $z\in K$, $\abs{(\prod_{n=1,n\neq j}^{n_{\epsilon}+k}-\prod_{n=1,n\neq j}^{n_{\epsilon}})(z))}<2\epsilon$,

and so for each $j\geq1$, $(\Pi_{p}^{j})_{p\geq1}$ is a Cauchy sequence
in $A(K)$ as was to be proved.

Hence, there exists $f_{j}\in A(K)$ such that $\|f_{j}-\prod_{n=1,n\neq j}^{N}f_{n,j}\|\rightarrow0$
as $N\rightarrow\infty$.

Step 5: Properties of the $f_{j}$'s.

Proof that $f_{j}(z_{0})=0$:

Since uniform convergence implies pointwise convergence, we have

$(\prod_{n=1,n\neq j}^{N}f_{n,j})(z)=\prod_{n=1,n\neq j}^{N}f_{n,j}(z)\rightarrow f_{j}(z)$
as $N\rightarrow\infty$ for all $z\in K$.

Since $z_{p}\rightarrow z_{0}$ as $p\rightarrow\infty$ and since
$f_{j}$ is continuous on $K$, $f_{j}(z_{p})\rightarrow f_{j}(z_{0})$
as $p\rightarrow\infty$ . But by (2) of step 2, for all $n\neq j$,
$f_{n,j}(z_{n})=0$. Thus, for $N_{p}>p$, and for all $p$, 
\[
(\prod_{n=1,n\neq j}^{N_{p}}f_{n,j})(z_{p})=\prod_{n=1,n\neq j}^{N_{p}}f_{n,j}(z_{p})=0,
\]
and so as $p\rightarrow\infty$, $N_{p}\rightarrow\infty$, and hence
\[
0=(\prod_{n=1,n\neq j}^{N_{p}}f_{n,j})(z_{p})\rightarrow f_{j}(z_{p})
\]
so $f_{j}(z_{p})=0$ for all $p$, so $0=f_{j}(z_{p})\rightarrow f_{j}(z_{0})$
as $p\rightarrow\infty$, so $f_{j}(z_{0})=0$.

Proof that $f_{j}(z_{j})=1$:

By (3) of step 2, $f_{n,j}(z_{j})=1$ for all $n\neq j$. Hence, $1=(\prod_{n=1,n\neq j}^{N}f_{n,j})(z_{j})\rightarrow f_{j}(z_{j})$
as $N\rightarrow\infty$, so $f_{j}(z_{j})=1$.

Proof that for $z\in(z_{n})_{n\geq0}\backslash\left\{ z_{\text{j}}\right\} $,
$f_{j}(z)=0$:

By (2) of step 2, $f_{n,j}(z_{n})=0$ for all $n\neq j$. We already
have shown that $f_{j}(z_{0})=0$. For $p\geq1$, $p\neq j$, and
$N>p$, $0=(\prod_{n=1,n\neq j}^{N}f_{n,j})(z_{p})\rightarrow f_{j}(z_{p})$
as $N\rightarrow\infty$. Hence, $f_{j}(z_{p})=0$, as was to be shown.

Proof that $\|f_{j}\|=1$:

\begin{eqnarray*}
\|f_{j}\| & = & \|f_{j}-\prod_{n=1,n\neq j}^{N}f_{n,j}+\prod_{n=1,n\neq j}^{N}f_{n,j}\|\\
 & \leq & \|f_{j}-\prod_{n=1,n\neq j}^{N}f_{n,j}\|+\|\prod_{n=1,n\neq j}^{N}f_{n,j}\|\\
 & \leq & \|f_{j}-\prod_{n=1,n\neq j}^{N}f_{n,j}\|+\prod_{n=1,n\neq j}^{N}\|f_{n,j}\|\\
 & \leq & \|f_{j}-\prod_{n=1,n\neq j}^{N}f_{n,j}\|+1\\
 & \rightarrow & 1\textrm{ as \ensuremath{N\rightarrow\infty}},
\end{eqnarray*}
so $\|f_{j}\|\leq1$. But, $f_{j}(z_{j})=1$, so $\|f_{j}\|=1$.

Proof that for all $z\in K\backslash\left\{ z_{0},z_{1},z_{2},...,z_{j},...\right\} $,
$f_{i}(z))\in C_{\frac{1}{8}}\cap D(\frac{1}{2};\frac{1}{2})$:

For every $n\neq j$ and for all $z\in K\backslash\left\{ z_{0},z_{1},z_{2},...,z_{j},...\right\} $,
$0<|f_{n,j}(z)|<1$.

Hence, $0<\prod_{n=1,n\neq j}^{N}|f_{n,j}(z)|<1$ for all $N$, and
$\prod_{n=1,n\neq j}^{N}|f_{n,j}(z)|$ is strictly decreasing as $N\rightarrow\infty$.

Furthermore, $0<\prod_{n=1}^{N}(1-\delta^{n})<1$ for all $N$, $\prod_{n=1}^{N}(1-\delta^{n})$
is strictly decreasing as $N\rightarrow\infty$, and $\prod_{n=1}^{N}(1-\delta^{n})\searrow\prod_{n=1}^{\infty}(1-\delta^{n})$
by Corollary 4.7.

Because $z\neq z_{0}$, there exists a disk, $D(z_{0};\rho)$ that
is properly contained in $K$ and $z\notin\overline{D}(z_{0};\rho)$.

There exists $N_{\rho}$ such that for all $n\geq N_{\rho}$, $(\overline{D}(z_{n};\epsilon_{n})\subset D(z_{0};\rho)$.
Thus $z\in(K\backslash D(z_{n};\epsilon_{n})$, so by (5) of step
2, $f_{n,j}(z)\in D(1;\delta^{n})$. Hence, $1-\delta^{n}<|f_{n,j}(z)|<1$.

Thus, for all $k\geq1$, $0<\prod_{n=N_{\rho}}^{N_{\rho}+k}(1-\delta^{n})<\prod_{n=N_{\rho},n\neq j}^{N_{\rho}+k}|f_{n,j}(z)|<1$.
Multiply this inequality by $\prod_{n=1}^{N_{\rho}-1}(1-\delta^{n})$
to get 
\[
0<\prod_{n=1}^{N_{\rho}+k}(1-\delta^{n})<\left[\prod_{n=1}^{N_{\rho}-1}(1-\delta^{n})\right]\left[\prod_{n=N_{\rho},n\neq j}^{N_{\rho}+k}|f_{n,j}(z)|\right]<1,
\]
and so 
\[
0<\prod_{n=1}^{\infty}(1-\delta^{n})<\left[\prod_{n=1}^{N_{\rho}-1}(1-\delta^{n})\right]\left[\prod_{n=N_{\rho},n\neq j}^{N_{\rho}+k}|f_{n,j}(z)|\right]<1,
\]
and so 
\[
0<\frac{\prod_{n=1}^{\infty}(1-\delta^{n})}{\left[\prod_{n=1}^{N_{\rho}-1}(1-\delta^{n})\right]}<\prod_{n=N_{\rho},n\neq j}^{N_{\rho}+k}|f_{n,j}(z)|<1.
\]
Let $k\rightarrow\infty$ to get 
\[
0<\frac{\prod_{n=1}^{\infty}(1-\delta^{n})}{\left[\prod_{n=1}^{N_{\rho}-1}(1-\delta^{n})\right]}\leq\prod_{n=N_{\rho},n\neq j}^{\infty}|f_{n,j}(z)|<1.
\]
Multiply this inequality by $\prod_{n=1,n\neq j}^{N_{\rho}-1}|f_{n,j}(z)|$
to get 
\[
0<\prod_{n=1,n\neq j}^{\infty}|f_{n,j}(z)|<\prod_{n=1,n\neq j}^{N_{\rho}-1}|f_{n,j}(z)|<1,
\]
so $0<\prod_{n=1,n\neq j}^{\infty}|f_{n,j}(z)|<1$.

Since $\prod_{n=1,n\neq j}^{N}f_{n,j}(z)\rightarrow f_{j}(z)$ as
$N\rightarrow\infty$, we have 
\[
1>\abs{\prod_{n=1,n\neq j}^{N}f_{n,j}(z)}=\prod_{n=1,n\neq j}^{N}\abs{f_{n,j}(z)}\rightarrow|f_{j}(z)|
\]
as $N\rightarrow\infty$. But 
\[
\prod_{n=1,n\neq j}^{N}\abs{f_{n,j}(z)}\rightarrow\prod_{n=1,n\neq j}^{\infty}\abs{f_{n,j}(z)}>0,
\]
so $0<|f_{j}(z)|<1$.

Now $\sum_{n=1,n\neq j}^{N}\arg(f_{n,j}(z))=\arg(\prod_{n=1,n\neq j}^{N}f_{n,j}(z))\rightarrow\arg(f_{j}(z))$
as $N\rightarrow\infty$.

By (6) of step 2, we have $-\theta_{n}<\arg(f_{n,j}(z))<\theta_{n}$,
so 
\[
-\sum_{n=1}^{N}\theta_{n}<\sum_{n=1,n\neq j}^{N}\arg(f_{n,j}(z))<\sum_{n=1}^{N}\theta_{n}.
\]

Since $\sum_{n=1}^{N}\theta_{n}<\frac{\tau}{8}$ , we have $-\frac{\tau}{8}<\sum_{n=1,n\neq j}^{N}\arg(f_{n,j}(z))<\frac{\tau}{8}$,
and since

\[
\arg(\prod_{n=1,n\neq j}^{N}f_{n,j}(z))\rightarrow\arg(f_{j}(z))
\]
as $N\rightarrow\infty$, we have $-\frac{\tau}{8}\leq\arg(f_{j}(z))\leq\frac{\tau}{8}$.

Hence, $f_{i}(z))\in C_{\frac{1}{8}}\cap D(\frac{1}{2};\frac{1}{2})$
as was to be proved.\end{proof}
\begin{thm}
Let $K\in\mathcal{K}$. Every $z_{0}\in\partial K$ is a peak point
for $A(K)$. Hence, $\mathcal{P}(A(K))=\partial K$.\end{thm}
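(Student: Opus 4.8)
The plan is to peel off the two degenerate cases and then feed a $(0,1)$ sequence supplied by Theorem 7.7 into the peak-point criterion of Lemma 7.6.

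\emph{Reductions.} First I would use that $P(K)=A(K)$ (Mergelyan, since $\mathbb{C}\setminus K$ is connected). If $\int(K)=\emptyset$ then by Fact (ii) every point of $K=\partial K$ is a peak point for $A(K)$, and if $z_0\in(\partial K)_{\text{II}}$ then $z_0$ is a peak point by Fact (i). So I may assume $\int(K)\neq\emptyset$ and $z_0\in(\partial K)_{\text{I}}$. Such a $z_0$ is non-isolated in $K$ (every neighbourhood of $z_0$ meets $\int(K)\subset K\setminus\{z_0\}$), so Lemma 7.6 applies to it: it suffices to produce $g\in A(K)$ with $g(z)=0$ exactly at $z=z_0$, with $g(K)\subset\{0\}\cup D(\frac12;\frac12)$, and with $g(z)\in D(\frac12;\frac12)$ for some $z\in K\setminus\{z_0\}$ (i.e. condition (B) of Lemma 7.6).

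\emph{Construction.} By Theorem 4.9 I can choose a distinct sequence $(z_n)_{n\ge1}$ in $(\partial K)_{\text{I}}\cap\mathcal P_{\text{ca}}(K)$ with $|z_n-z_0|\searrow 0$, and then Theorem 7.7 yields a $(0,1)$ sequence $(f_j)_{j\ge1}\subset A(K)$ relative to $z_0$ and $(z_n)$: thus $f_j(z_0)=0$, $f_j(z_i)=\delta_{ij}$, $\|f_j\|=1$, and $f_j(z)\in C_{\frac18}\cap D(\frac12;\frac12)$ whenever $z\in K\setminus\{z_0,z_1,z_2,\dots\}$. I would then set
\[
g:=\sum_{j\ge1}2^{-j}f_j .
\]
Since $\sum_{j\ge1}2^{-j}=1$ and $\|f_j\|=1$, the series converges uniformly on $K$, so $g\in A(K)$ and $\|g\|\le1$.

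\emph{Verification of (B).} At $z_0$ one has $g(z_0)=\sum_j 2^{-j}f_j(z_0)=0$; at $z_k$ one has $g(z_k)=\sum_j 2^{-j}\delta_{jk}=2^{-k}\in(0,\frac12]\subset D(\frac12;\frac12)$, which already makes $g$ non-constant and forces it to take a value in $D(\frac12;\frac12)$ off $z_0$. For $z\in K\setminus\{z_0,z_1,z_2,\dots\}$ every $f_j(z)$ lies in the sector $C_{\frac18}$, so $\mathrm{Re}\,f_j(z)\ge|f_j(z)|\cos(\pi/8)>0$, whence $\mathrm{Re}\,g(z)>0$; in particular $g(z)\neq0$ for all such $z$, so $g^{-1}(0)=\{z_0\}$. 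Moreover $f_j(z)\in D(\frac12;\frac12)$ means $|f_j(z)|^2<\mathrm{Re}\,f_j(z)$, and a short application of Jensen's inequality for $t\mapsto t^2$ with the probability weights $2^{-j}$ gives $|g(z)|^2<\mathrm{Re}\,g(z)$, i.e. $g(z)\in D(\frac12;\frac12)$. Hence $g$ verifies condition (B) of Lemma 7.6, so $z_0$ is a peak point for $A(K)$; combined with $\mathcal P(A(K))\subset\partial K$ (maximum modulus), this yields $\mathcal P(A(K))=\partial K$.

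\emph{Where the difficulty lies.} Essentially all of the work is already done in Theorem 7.7 and the machinery behind it (the teardrop lemma, the Euler-product estimates of Section 4, the chaining lemmas of Section 5, Caratheodory's theorem); in the present argument the only delicate point is seeing that the average $g$ still avoids $0$ and stays inside $D(\frac12;\frac12)$ at the points $z\notin\{z_0,z_1,\dots\}$: a bare appeal to convexity of $C_{\frac18}\cap D(\frac12;\frac12)$ only places $g(z)$ in the closure of that set, which contains $0$, so one genuinely needs the right-half-plane bound coming from the cone $C_{\frac18}$ together with the quadratic Jensen estimate, as above.
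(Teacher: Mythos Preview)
Your proof is correct and follows essentially the same route as the paper: form the convex combination $g=\sum_j a_j f_j$ of the $(0,1)$ sequence from Theorem~7.7 and feed it into Lemma~7.6. The only difference is that the paper invokes condition~(C) of Lemma~7.6 rather than~(B): once you have $\mathrm{Re}\,g(z)>0$ for $z\neq z_0$ you already know $g(z)\in C_{1/2}$, so the Jensen step you use to push $g(z)$ into $D(\tfrac12;\tfrac12)$ is correct but unnecessary.
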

\begin{proof}
In \cite{Bachar1}, Theorem 5.1.ii), it is proved that every type
II boundary point of $K$ is a peak point for $A(K)$.

Therefore, we need only consider type I boundary points of $K$.

Let $z_{0}\in\partial K_{\text{I}}$.

We use the sequence of functions $(f_{i})_{i}\subset M_{z_{0}}\cap A(K)$
from Theorem 7.7.

Let $(a_{n})_{n}$ be any sequence of strictly decreasing positive
numbers such that $\sum_{i=1}^{\infty}a_{i}=1$.

(1) Consider the sequence, $(a_{i}f_{i})_{i}$, in $A(K)$, the associated
sequence, $(\sum_{i=1}^{N}a_{i}f_{i})_{N}$, of partial sums in $A(K)$,
and the sequence, $(\sum_{i=1}^{N}\|a_{i}f_{i}\|)_{N}$. With respect
to the latter, $\sum_{i=1}^{N}\|a_{i}f_{i}\|\leq\sum_{i=1}^{N}a_{i}\|f_{i}\|\leq\sum_{i=1}^{N}a_{i}\rightarrow1$
as $N\rightarrow\infty$.

In other words, $(\sum_{i=1}^{N}a_{i}f_{i})_{N}$ converges absolutely,
hence it converges by the well-known theorem that every absolutely
convergent series in a Banach space is convergent. Thus, there exists
$(\sum_{i=1}^{N}\|a_{i}f_{i}\|)_{N}$ such that $\|f-\sum_{i=1}^{N}a_{i}f_{i}\|\rightarrow0$
as $N\rightarrow\infty$.

Also, for $z\in K$, $(\sum_{i=1}^{N}a_{i}f_{i})(z)=\sum_{i=1}^{N}a_{i}f_{i}(z)\rightarrow f(z)$
as $N\rightarrow\infty$, and

$\mathrm{Re}((\sum_{i=1}^{N}a_{i}f_{i})(z))=\mathrm{Re}(\sum_{i=1}^{N}a_{i}f_{i}(z))=\sum_{i=1}^{N}a_{i}\mathrm{Re}f_{i}(z)\rightarrow\mathrm{Re}(f(z))$
as $N\rightarrow\infty$.

(2) Note that $f_{i}(z_{0})=0$ for all $i$.

(3) For all $k\geq1$, $\left(\sum_{i=1}^{N}a_{i}f_{i}\right)(z_{k})\rightarrow f(z_{k})$
as $N\rightarrow\infty$, But for $N>k$, $\left(\sum_{i=1}^{N}a_{i}f_{i}\right)(z_{k})=a_{k}>0$,
so $f(z_{k})>0$, and so $f(z_{k})\in C_{\frac{1}{8}}\cap D(\frac{1}{2};\frac{1}{2})$.

(4) 
\begin{eqnarray*}
\|f\| & = & \|(f-\sum_{i=1}^{N}a_{i}f_{i})+\sum_{i=1}^{N}a_{i}f_{i})\|\\
 & \leq & \|(f-\sum_{i=1}^{N}a_{i}f_{i})\|+\|\sum_{i=1}^{N}a_{i}f_{i})\|\\
 & \leq & \|(f-\sum_{i=1}^{N}a_{i}f_{i})\|+\sum_{i=1}^{N}a_{i}\\
 & < & \|(f-\sum_{i=1}^{N}a_{i}f_{i})\|+1\\
 & \rightarrow & 1\textrm{ as \ensuremath{N\rightarrow\infty},}
\end{eqnarray*}

so $\|f\|\leq1$.

(5) Let $z\in K\backslash\left\{ z_{0},z_{1},z_{2},...,z_{n},...\right\} $.
By Step 5 of Theorem 7.7, for all $i$, we have $f_{i}(z)\in C_{\frac{1}{2^{3}}}\cap D(\frac{1}{2};\frac{1}{2})$,
so $0<\mathrm{Re}(f_{\text{i}}(z))<1$. Also, $0<a_{i}\mathrm{Re}(f_{i}(z))<a_{i}<1$.

Hence, $0<\mathrm{Re}((\sum_{i=1}^{N}a_{i}f_{i})(z))=\mathrm{Re}(\sum_{i=1}^{N}a_{i}f_{i}(z))=\sum_{i=1}^{N}a_{i}\mathrm{Re}f_{i}(z)<\sum_{i=1}^{N}a_{i}<1$.
But $\sum_{i=1}^{N}a_{i}\mathrm{Re}f_{i}(z)$ is strictly increasing,
so $\sum_{i=1}^{N}a_{i}\mathrm{Re}f_{i}(z)\nearrow\sum_{i=1}^{\infty}a_{i}\mathrm{Re}f_{i}(z)\leq1$.
On the other hand, $\sum_{i=1}^{N}a_{i}\mathrm{Re}f_{i}(z)\rightarrow\mathrm{Re}(f(z))$
as $N\rightarrow\infty$. Thus, $0<\mathrm{Re}(f(z))=\sum_{i=1}^{\infty}a_{i}\mathrm{Re}f_{i}(z)\leq1$.

Therefore, by Lemma 7.6, part (C), $z_{0}$ is a peak point for $A(K)$.
\end{proof}

\section{The coincidence, with respect to $A(K)$, of the Bishop minimal boundary,
the set of peak points, the topological boundary of $K$, and the
Shilov boundary.}
\begin{defn}
$PF(A(K))$ denotes the set of peaking functions in $A(K)$, i.e.,
the set of all $f\in A(K)$ such that there exists $z\in K$ with
$f(z)=1$ and $|f(w)|<1$ for $w\in K\backslash\left\{ z\right\} $.

$NPF(A(K))=A(K)PF(A(K))$ denotes the set on non-peaking functions
in $A(K)$.

Clearly, $A(K)=PF(A(K))\cup NPF(A(K))$ and the union is disjoint.

For each $f\in A(K)$, $M_{f}:=\left\{ z\in K:|f(z)|=\|f\|\right\} $
is called the maximal set for $f$, i.e., the closed compact subset
of $K$ on which f attains its maximum modulus.

A closed subset, $S\subset K$, is a boundary for $A(K)$ in the Shilov
sense iff for each $f\in A(K)$, $M_{f}\cap S\neq\emptyset$.

A subset, $N\subset K$, is a boundary for $A(K)$ in the Bishop sense
iff for each $f\in A(K)$, $M_{f}\cap N\neq\emptyset$.\end{defn}
\begin{thm}
Let K$\in\mathcal{K}$.
\begin{lyxlist}{00.00.0000}
\item [{\emph{(a)}}] The intersection, $\Gamma(A(K))$, of all subsets
$S\subset K$ that are boundaries for $A(K)$ in the Shilov sense
is a boundary for $A(K)$ in the sense of Shilov, and is contained
in every boundary for $A(K)$ in the Shilov sense. $\Gamma(A(K))$
is called the Shilov boundary for $A(K)$.
\item [{\emph{(b)}}] The class of all $N\subset K$, that are boundaries
for $A(K)$ in the Bishop sense contains a smallest one, $M(A(K))$,
called the Bishop minimal boundary for $A(K)$, and $M(A(K))=\mathcal{P}(A(K))$,
the set of peak points for $A(K)$.
\item [{\emph{(c)}}] $\mathcal{P}(A(K))\subset\Gamma(A(K))$, i.e., every
peak point for $A(K)$ is contained in the Shilov boundary for $A(K)$.
\end{lyxlist}
\end{thm}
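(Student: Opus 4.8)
The plan is to obtain Theorem 9.2 as a short formal consequence of Theorem 8.1, which already supplies the equality $\mathcal{P}(A(K))=\partial K$, together with two entirely elementary observations about boundaries; in particular nothing from Choquet or Bishop--de Leeuw theory is needed. First I would record that $\partial K$ is itself a boundary for $A(K)$, in both the Shilov and the Bishop sense. Indeed, let $f\in A(K)$ with $\|f\|=1$; the maximal set $M_f$ is a nonempty compact subset of $K$. If $M_f\cap\partial K=\emptyset$, then $M_f\subset\int(K)$; pick $z\in M_f$ and let $V$ be the component of $\int(K)$ containing $z$. Then $|f|$ attains its maximum over $\overline{V}$ at the interior point $z$, so the maximum modulus principle forces $f$ to be constant on $V$, whence $|f|=\|f\|$ on the nonempty set $\partial V$, which lies in $\partial(\int(K))\subset\partial K$ --- a contradiction. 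Thus $M_f\cap\partial K\neq\emptyset$ always, so $\partial K$ is a closed boundary. Second I would record that if $z_{0}$ is a peak point, realized by $f\in PF(A(K))$ with $f(z_{0})=1$ and $|f(w)|<1$ for $w\in K\sm\{z_{0}\}$, then $M_f=\{z_{0}\}$; consequently every boundary $S$ for $A(K)$, Shilov or Bishop, must intersect $M_f$ and therefore contains $z_{0}$. Combining this with Theorem 8.1, $\partial K=\mathcal{P}(A(K))$ is contained in every closed boundary and in every Bishop boundary.

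With these two facts in hand, I would argue each part directly. For (a): the intersection $\Gamma(A(K))$ of all closed boundaries contains $\partial K$ (since every closed boundary does) and is contained in $\partial K$ (since $\partial K$ is one of the sets being intersected), so $\Gamma(A(K))=\partial K$; this set is a closed boundary and, by its very definition, is contained in every closed boundary, which is exactly the assertion of (a). For (c): since $\mathcal{P}(A(K))=\partial K=\Gamma(A(K))$, the inclusion $\mathcal{P}(A(K))\subset\Gamma(A(K))$ is immediate (indeed an equality). For (b): every set $N$ that is a boundary for $A(K)$ in the Bishop sense satisfies $\partial K=\mathcal{P}(A(K))\subset N$, while $\partial K$ has already been shown to be such a boundary, so $\partial K$ is the smallest one; setting $M(A(K)):=\partial K=\mathcal{P}(A(K))$ gives both claims of (b).

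The only points requiring any care are the two preliminary observations, and both are classical: the first is the maximum modulus principle applied component-by-component on $\int(K)$, using that a bounded nonempty open set has nonempty boundary and that $\partial V\subset\partial(\int(K))\subset\partial K$; the second is the trivial remark that a peaking function has a single-point maximal set. The genuine difficulty --- that $\partial K$ really is the set of peak points --- is Theorem 8.1, which has already been established without analytic capacity, so I do not anticipate any essential obstacle in the present argument beyond presenting these reductions cleanly.
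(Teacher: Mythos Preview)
Your argument is correct, but it takes a genuinely different route from the paper. In the paper, parts (a) and (b) are not proved at all: they are quoted as standard facts from the general theory of uniform algebras (references to Stout, Gamelin, Bonsall--Duncan, Garnett for the Shilov boundary, and to Bishop's 1959 paper for the minimal boundary), and only (c) is argued directly, by the same observation you make (if $f$ peaks at $z_0$ then $M_f=\{z_0\}$, so any Shilov boundary must contain $z_0$). The equalities $\Gamma(A(K))=\partial K=M(A(K))$ are then deduced separately in the next theorem, combining these general facts with Theorem~7.8.

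What you do instead is bypass the general existence theorems entirely. Using Theorem~7.8 ($\mathcal{P}(A(K))=\partial K$) and the maximum-modulus argument that $\partial K$ is itself a boundary, you show directly that $\partial K$ is contained in every closed (resp.\ Bishop) boundary and is one such boundary, so the intersection of all closed boundaries equals $\partial K$ and the smallest Bishop boundary is $\partial K$. This is self-contained and avoids invoking Bishop's theorem or the abstract Shilov-boundary existence proof; the cost is that your proof of the present theorem is specific to $A(K)$ and logically depends on Theorem~7.8, whereas the paper records (a)--(c) as general uniform-algebra facts independent of that theorem. Two minor remarks: your numbering is off (what you call Theorem~8.1 is Theorem~7.8 here, and the statement is Theorem~8.2, not 9.2); and your maximum-modulus step should note the trivial case $\int(K)=\emptyset$, where $K=\partial K$ and there is nothing to prove.
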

\begin{proof}
(a) Proofs can be found in \cite{Stout,Gamelin,Bonsall-Duncan,Garnett}.

(b) See \cite{Bishop}, Theorem 1.

(c) It suffices to prove that for every $S\subset K$ that is a boundary
for $A(K)$ in the Shilov sense, $\mathcal{P}(A(K))\subset S$. We
argue the contrapositive: Suppose there exists an $S\subset K$ that
is a boundary for $A(K)$ in the Shilov sense but that $\mathcal{P}(A(K))$
is not contained in $S$, i.e., there exists $z_{0}\in\mathcal{P}(A(K))$
such that $z_{0}\notin S$. Since $z_{0}$ is a peak point, there
exists $f\in A(K)$ such that $f(z_{0})=1$ and $|f(w)|<1$ for $w\in K\backslash\left\{ z_{0}\right\} $.
Clearly, $M_{f}$, the the maximal set for $f$, satisfies $M_{f}=\left\{ z_{0}\right\} $,
and so $M_{f}\cap S=\emptyset$, which contradicts the fact that $S$
is a boundary for $A(K)$ in the Shilov sense.\end{proof}
\begin{thm}
For all $K\in\mathcal{K}$, $\partial K=\mathcal{P}(A(K))=M(A(K))=\Gamma(A(K))$.\end{thm}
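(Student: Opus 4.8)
\emph{Proof proposal.} The statement is pure assembly of results already in hand, so the plan is simply to chain them together in the right order. First, Theorem 7.8 supplies the one genuinely substantive equality, $\partial K=\mathcal{P}(A(K))$: the inclusion $\mathcal{P}(A(K))\subseteq\partial K$ was recorded in the Introduction via the maximum modulus principle, and the reverse inclusion (every boundary point is a peak point) is exactly Theorem 7.8, whose proof rests on the teardrop constructions, the infinite products of $P(K)$-functions, and the chaining lemmas of Sections 5--7. Nothing further about peak points needs to be proved here.

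Next I would invoke Theorem 8.2(b), which identifies the Bishop minimal boundary with the peak point set, $M(A(K))=\mathcal{P}(A(K))$; hence already $\partial K=\mathcal{P}(A(K))=M(A(K))$. It then remains only to insert $\Gamma(A(K))$ into this chain. One inclusion is Theorem 8.2(c), $\mathcal{P}(A(K))\subseteq\Gamma(A(K))$, so $\partial K\subseteq\Gamma(A(K))$. For the reverse I would argue that $\partial K$ is itself a closed subset of $K$ that is a boundary for $A(K)$ in the Shilov sense: given $f\in A(K)$, $f$ is continuous on the compact set $K$ and holomorphic on $\int(K)$, so by the maximum modulus principle (applied on each component of $\int(K)$) one has $\|f\|=\max_{\partial K}|f|$, i.e. $M_f\cap\partial K\neq\emptyset$. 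Since by Theorem 8.2(a) the Shilov boundary $\Gamma(A(K))$ is contained in \emph{every} Shilov-sense boundary, $\Gamma(A(K))\subseteq\partial K$. Combining the two inclusions gives $\Gamma(A(K))=\partial K$, and therefore $\partial K=\mathcal{P}(A(K))=M(A(K))=\Gamma(A(K))$.

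There is no real obstacle left at this stage; the difficulty is entirely front-loaded into Theorem 7.8. The only point meriting a line of care is the verification that $\partial K$ qualifies as a Shilov-sense boundary even when $\int(K)$ is disconnected (the maximum modulus principle is applied componentwise, using connectedness of each component), together with the observation that $\partial K$ is closed, which is precisely what permits sandwiching $\Gamma(A(K))$ between $\mathcal{P}(A(K))$ and $\partial K$.
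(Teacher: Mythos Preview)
Your proposal is correct and follows essentially the same route as the paper: invoke Theorem~7.8 for $\partial K=\mathcal{P}(A(K))$, Theorem~8.2(b) for $M(A(K))=\mathcal{P}(A(K))$, Theorem~8.2(c) for $\mathcal{P}(A(K))\subseteq\Gamma(A(K))$, and then establish the reverse inclusion by exhibiting $\partial K$ as a Shilov-sense boundary via the maximum modulus principle on components of $\int(K)$. The only cosmetic difference is that the paper phrases the last step as showing $\mathcal{P}(A(K))$ (rather than $\partial K$) is a Shilov-sense boundary and splits into the cases $f\in PF(A(K))$ and $f\in NPF(A(K))$; your direct componentwise maximum-modulus argument is a mild streamlining of the same idea.
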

\begin{proof}
$\partial K=\mathcal{P}(A(K))$ by Theorem 7.8, and $M(A(K))=\mathcal{P}(A(K))$
by Theorem 8.2 (b), and so

$\partial K=\mathcal{P}(A(K))=M(A(K))$.

$\mathcal{P}(A(K))\subset\Gamma(A(K))$by Theorem 8.2 (c).

To finish the complete proof, we now show that $\Gamma(A(K))\subset\mathcal{P}(A(K))$($=\partial K$
as just shown).

By Theorem 8.2 (a), $\Gamma(A(K))$ is contained in every boundary
for $A(K)$ in the Shilov sense. Hence it suffices to show that $\mathcal{P}(A(K))$
is a boundary for $A(K)$ in the Shilov sense.

We already know that $\mathcal{P}(A(K))$ is a closed subset of $K$
since $\partial K=\mathcal{P}(A(K))$.

For every $f\in PF(A(K))$, there exists $z_{f}\in\mathcal{P}(A(K)=\partial K$,
and so $M_{f}=\left\{ z_{f}\right\} $ and $M_{f}\cap\mathcal{P}(A(K)\neq\emptyset$.

Next, let $f\in NPF(A(K))$ be arbitrary. Now $M_{f}$ is a non-empty
subset of $K$. Let $z\in M_{f}$. If $z\in\partial K$, then $z\in\mathcal{P}(A(K)$,
hence, $M_{f}\cap\mathcal{P}(A(K)\neq\emptyset$. On the other hand,
if $z\notin\partial K$, then $z\in\int(K)$, and hence there exist
an open, connected, simply-connected set $U$ contained in $\int(K)$
to which $z$ belongs. This means that $f$ attains its maximum modulus
(with respect to $\bar{U}$) at an interior point, $w\in U$, hence
by the classical maximal modulus theorem, $f$ has the constant value
$f(w)$ on $\bar{U}$). In particular, since $\partial U\subset\partial K$,
there is a point $\zeta\in\partial K=\mathcal{P}(A(K)$ such that
$\zeta\in M_{f}$. Thus, $M_{f}\cap\mathcal{P}(A(K)\neq\emptyset$.

Thus, for every $f\in A(K)$, $M_{f}\cap\mathcal{P}(A(K)\neq\emptyset$,
so $\mathcal{P}(A(K)$ is a boundary for $A(K)$ in the Shilov sense
as claimed.
\end{proof}

\section{The set of $z\in K$ such that there exists a bounded approximate
identity in $M_{z}$ coincides with $\partial K$.}

In what follows, it suffices to consider commutative normed algebras.
\begin{defn}
Let $A$ be a commutative normed algebra. A bounded approximate identity
(bai) for $A$ is a net $\left\{ e_{\alpha}\right\} _{\alpha\in I}\subset A$
satisfying, for some $k>0$ and for all $\alpha\in I$, $\left\Vert e_{\alpha}\right\Vert \leq k$,
and satisfying $\lim_{\alpha}xe_{\alpha}=x$ for any $x\in A$.\end{defn}
\begin{thm}[\cite{Cohen}: Cohen factorization theorem]
Let $A$ be any commutative Banach algebra. If $A$ has a bai, then
for any $f\in A$, there exists $g,h\in A$ such that $f=gh$ and
for any $\delta>0$, $h$ can be chosen to be in the closed ideal
generated by $f$ so that $\|h-f\|<\delta$.
\end{thm}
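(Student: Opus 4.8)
The plan is to run Cohen's original construction, which simplifies here because $A$ is commutative. First I would dispose of the case that $A$ has an identity (then $f=1\cdot f$ works with $h=f$), and otherwise adjoin one, forming the unital Banach algebra $A_{1}=A\oplus\mathbb{C}1$ with norm $\norm{a+\lambda1}=\norm a+\abs\lambda$, so that $A$ is a closed ideal of $A_{1}$. Fix $f\in A$ and $\delta>0$; fix a parameter $\varepsilon>0$ with $\varepsilon k<1$, and set $c=\varepsilon/(1+\varepsilon)\in(0,1)$, so that $1-c=1/(1+\varepsilon)$ and $c/(1-c)=\varepsilon$. The elementary fact I would record first is that, for every element $u$ of the bounded approximate identity, $y:=(1-c)1+c\,u=(1-c)(1+\varepsilon u)$ is invertible in $A_{1}$ by the Neumann series (since $\norm{\varepsilon u}\le\varepsilon k<1$), with $\norm{y^{-1}}\le(1+\varepsilon)/(1-\varepsilon k)$ and $y^{-1}=\tfrac1{1-c}1+(\text{an element of }A)$.

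Next I would build inductively indices $\lambda_{1},\lambda_{2},\dots$ in the directed set $I$ -- writing $a_{n}=e_{\lambda_{n}}$ -- together with the partial products $g_{n}=\prod_{j=1}^{n}\big((1-c)1+c\,a_{j}\big)$ (with $g_{0}=1$) and $h_{n}=g_{n}^{-1}f$. Each $g_{n}$ is invertible, so $g_{n}h_{n}=f$ for every $n$; expanding the products one sees $g_{n}=(1-c)^{n}1+w_{n}$ with $w_{n}\in A$ and $g_{n}^{-1}=(1-c)^{-n}1+(\text{an element of }A)$, whence $h_{n}$ lies in the closed ideal of $A$ generated by $f$. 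Using commutativity one obtains the increment formulas
\[
g_{n}-g_{n-1}=c\big(g_{n-1}a_{n}-g_{n-1}\big),\qquad h_{n}-h_{n-1}=\varepsilon\,(1+\varepsilon a_{n})^{-1}\big(p_{n}-a_{n}p_{n}\big),\quad\text{where }p_{n}:=g_{n-1}^{-1}f\in A.
\]
At stage $n$ the elements $w_{n-1},p_{n}\in A$ are already fixed by $\lambda_{1},\dots,\lambda_{n-1}$; since $\{e_{\lambda}\}$ is an approximate identity on the directed index set $I$, I can then pick $\lambda_{n}$ so that $\norm{w_{n-1}-a_{n}w_{n-1}}$ and $\norm{p_{n}-a_{n}p_{n}}$ are both smaller than $2^{-n}\eta$, where $\eta>0$ is a small constant fixed at the outset.

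Then I would conclude. First, $\norm{h_{n}-h_{n-1}}\le\frac{\varepsilon}{1-\varepsilon k}\,2^{-n}\eta$, so $(h_{n})$ is Cauchy with limit $h\in A$, and since $h_{0}=f$ we get $\norm{h-f}\le\sum_{n\ge1}\norm{h_{n}-h_{n-1}}\le\frac{\varepsilon}{1-\varepsilon k}\eta<\delta$ provided $\eta$ was chosen small enough. Second, splitting $g_{n-1}=(1-c)^{n-1}1+w_{n-1}$ gives $\norm{g_{n}-g_{n-1}}\le c(1-c)^{n-1}(k+1)+c\,2^{-n}\eta$, a summable bound, so $(g_{n})$ is Cauchy in $A_{1}$ with limit $g$; and since $(1-c)^{n}\to0$ we have $g=\lim_{n}w_{n}\in A$. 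Passing to the limit in $g_{n}h_{n}=f$ and using continuity of multiplication yields $gh=f$ with $g,h\in A$, $h$ in the closed ideal of $A$ generated by $f$, and $\norm{h-f}<\delta$, as required.

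The only point I expect to require care is the bookkeeping in the inductive step: one must check that a single index $\lambda_{n}$ can be made to control both increment bounds at once (possible because $I$ is directed and $w_{n-1},p_{n}$ are frozen before $\lambda_{n}$ is chosen) and that the two resulting series are summable with totals small enough to give $\norm{h-f}<\delta$ and convergence of $(g_{n})$; the rest is formal. This is, of course, Cohen's classical factorization theorem, which the paper invokes by reference; the argument sketched above is the standard specialization of Cohen's proof to the commutative case (cf.\ Hewitt--Ross or Bonsall--Duncan).
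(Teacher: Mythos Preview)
Your sketch is a correct rendition of Cohen's classical argument (adjoin an identity, build the invertible elements $(1-c)1+ce_{\lambda_n}$, control the increments of $g_n$ and $h_n=g_n^{-1}f$ inductively, and pass to the limit), and the bookkeeping you flag is exactly the one delicate point. The paper, however, does not prove this theorem at all: it is stated as an established external result with a citation to Cohen's original 1959 paper and then applied in the next theorem, so there is no ``paper's own proof'' to compare against beyond that reference.
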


\begin{thm}
For all $K\in\mathcal{K}$, 
\[
BAI(A(K)):=\left\{ z\in K:\textrm{ there exists a bai for }M_{z}\right\} =\partial K.
\]
\end{thm}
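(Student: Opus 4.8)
The plan is to establish the two inclusions $\partial K\subseteq BAI(A(K))$ and $BAI(A(K))\subseteq\partial K$ separately. For the first I would exploit that every boundary point is a peak point (Theorem 7.8) to hand-build a bounded approximate identity; for the second I would use a local factorization obstruction at interior points, either through Cohen's factorization theorem (Theorem 9.2) or through a Cauchy estimate on first derivatives.

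\emph{Step 1: $\partial K\subseteq BAI(A(K))$.} Fix $z_0\in\partial K$. By Theorem 7.8 there is $f\in A(K)$ with $f(z_0)=1$, $\|f\|=1$, and $|f(w)|<1$ for every $w\in K\setminus\{z_0\}$. Given an open neighbourhood $V$ of $z_0$ in $K$, compactness of $K\setminus V$ yields $\rho_V:=\sup_{w\in K\setminus V}|f(w)|<1$, so for any $\delta>0$ one may choose $n$ with $\rho_V^n<\delta$ and set $g_{V,\delta}:=f^n\in A(K)$, which satisfies $g_{V,\delta}(z_0)=1$, $\|g_{V,\delta}\|=1$, and $|g_{V,\delta}|<\delta$ on $K\setminus V$. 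Put $e_{V,\delta}:=1-g_{V,\delta}$; then $e_{V,\delta}\in M_{z_0}$ and $\|e_{V,\delta}\|\le2$. I would direct the index set of pairs $(V,\delta)$ by $(V_1,\delta_1)\preceq(V_2,\delta_2)$ iff $V_2\subseteq V_1$ and $\delta_2\le\delta_1$, and claim $\{e_{V,\delta}\}$ is a bai for $M_{z_0}$ with bound $2$. To verify this, let $h\in M_{z_0}$ and $\eta>0$; since $h$ is continuous and $h(z_0)=0$, there is a neighbourhood $V_0$ of $z_0$ with $|h|\le\eta$ on $V_0$, and I set $\delta_0:=\eta/(1+\|h\|)$. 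For $(V,\delta)\succeq(V_0,\delta_0)$ one has $he_{V,\delta}-h=-h\,g_{V,\delta}$, which is bounded by $|h|\le\eta$ on $V$ (because $V\subseteq V_0$) and by $\|h\|\,\delta_0<\eta$ on $K\setminus V$; hence $\|he_{V,\delta}-h\|\le\eta$. Thus $he_{V,\delta}\to h$ for all $h\in M_{z_0}$, so $z_0\in BAI(A(K))$.

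\emph{Step 2: $BAI(A(K))\subseteq\partial K$.} Equivalently, no $z_0\in\mathrm{int}(K)$ lies in $BAI(A(K))$. Assume for contradiction that $z_0\in\mathrm{int}(K)$ and $M_{z_0}$ has a bai. The map $f_0(w):=w-z_0$ belongs to $M_{z_0}$, so Cohen's factorization theorem (Theorem 9.2) gives $g,h\in M_{z_0}$ with $f_0=gh$. On a disk $D(z_0;r)\subseteq\mathrm{int}(K)$ the functions $g,h$ are holomorphic and vanish at $z_0$, hence $g(w)=(w-z_0)g_1(w)$ and $h(w)=(w-z_0)h_1(w)$ with $g_1,h_1$ holomorphic there, so $w-z_0=(w-z_0)^2g_1(w)h_1(w)$ on $D(z_0;r)$, forcing $(w-z_0)g_1(w)h_1(w)\equiv1$ --- impossible, as the left side vanishes at $z_0$. (If one prefers to avoid Cohen: a bai $\{e_\alpha\}\subset M_{z_0}$ satisfies $f_0e_\alpha\to f_0$ uniformly on $K$, hence on a closed disk $\overline{D}(z_0;r)\subset\mathrm{int}(K)$; then Cauchy's derivative estimate gives $(f_0e_\alpha)'(z_0)\to f_0'(z_0)=1$, whereas $(f_0e_\alpha)'(z_0)=f_0'(z_0)e_\alpha(z_0)+f_0(z_0)e_\alpha'(z_0)=0$ since $e_\alpha(z_0)=0$, a contradiction.) Therefore $\mathrm{int}(K)\cap BAI(A(K))=\emptyset$.

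Combining the two steps gives $BAI(A(K))=\partial K$. I expect the first step to be the main obstacle: one must extract from the mere peak-point property a family of peaking functions that are simultaneously normalized ($\|g\|=1$, $g(z_0)=1$) and uniformly small outside an arbitrarily prescribed neighbourhood of $z_0$, arrange them into a bona fide net, and verify the approximate-identity bound by the two-region split ($h$ small on $V$, $g$ small on $K\setminus V$). The second step is short once Cohen's theorem --- or, as a fallback, the Cauchy estimate on derivatives --- is invoked.
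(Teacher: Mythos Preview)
Your proof is correct. Step 2 (the inclusion $BAI(A(K))\subset\partial K$) is the paper's argument: apply Cohen's theorem to $w\mapsto w-z_0$ and contradict via the product rule at $z_0$; your power-series phrasing is equivalent to the paper's one-line derivative computation $1=f'(z_0)=g'(z_0)h(z_0)+g(z_0)h'(z_0)=0$, and your Cauchy-estimate alternative is a Cohen-free variant the paper does not give. Step 1 (the inclusion $\partial K\subset BAI(A(K))$), however, differs: the paper separates isolated from non-isolated $z_0$, and for the latter invokes Lemma 7.6(B) to get $f$ with $f(z_0)=0$ and $f(K)\subset\{0\}\cup D(\tfrac12;\tfrac12)$, then uses the teardrop machinery (Lemma 2.2, part (2.2.6)) to produce a \emph{sequence} $f_n=\mathbb{Z}^{\alpha_n}\circ f$ with $\|f_n\|\le 1$ serving as the bai. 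You instead take the peaking function $f$ itself (with $f(z_0)=1$), form powers $f^n$, and set $e_{V,\delta}=1-f^n$, obtaining a net bounded by $2$. Your route is more elementary---it needs none of Section 2 and handles isolated and non-isolated boundary points uniformly---while the paper's route recycles its $\alpha$-th-root apparatus and yields a sequential bai with the sharper bound $1$; your net can in fact be thinned to the sequence $e_n=1-f^n$ by the same two-region split.
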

\begin{proof}
We first show $BAI(A(K))\subset\partial K$:

Let $M_{z_{0}}$ have a bai. Suppose $z_{0}\in\int(K)$. The function,
$f(z)=z-z_{0}$, $z\in K$, is in $M_{z_{0}}$. By Theorem 9.2, there
exists $g,h\in M_{z_{0}}$ such that $f=gh$. Now $f,\,g$ and $h$
are holomorphic on $\int(K)$, and so we can take complex derivatives
at $z=z_{0}$ and obtain $1=f^{\prime}(z_{0})=g^{\prime}(z_{0})h(z_{0})+h^{\prime}(z_{0})g(z_{0})=0$,
a contradiction. Thus, $z_{0}\in\partial K$.

Finally, we show $\partial K\subset BAI(A(K))$:

Let $z_{0}\in\partial K$ be isolated. Define $e\in M_{z_{0}}$ by
$e(z_{0})=0$, and $e(z)=1$ for $z\in K\backslash{z_{0}}$. For $n=1,2,3,...$,
put $e_{n}=e$. It is clear that $(e_{n})_{n}$ is a bai for $M_{z_{0}}$.

Let $z_{0}\in\partial K$ be non-isolated. By Theorem 8.3, $z_{0}$
is a peak point for $A(K)$. By Lemma 7.6 (B), there exist $f\in A(K)$
such that $f(z)=0$ iff $z=z_{0}$, $f(K)\subset\left\{ 0\right\} \cup D(\frac{1}{2};\frac{1}{2})$,
and there exists $z_{1}\in K\backslash\left\{ z_{0}\right\} $ such
that $f(z_{1})\in D(\frac{1}{2};\frac{1}{2})$.

Select 0$<\epsilon<\frac{1}{2}$ such that for all $n\geq1$, $\frac{\epsilon}{2^{n}}<|f(z_{1})|$.
We next define the following two sequences of nbhds of 0 and 1, respectively:
$(D(0;\frac{\epsilon}{2^{n}})_{n}$ and $(D(1;\frac{\epsilon}{2^{n}})_{n}$.

By Lemma 2.2, part 2.2.6, for each $n\geq1$, there exists $0<\alpha_{n}<1$
such that for all 0$<\alpha\leq\alpha_{n},$ $\mathbb{Z}^{\alpha}$
maps $\overline{D}(\frac{1}{2};\frac{1}{2})\backslash D(0;\frac{\epsilon}{2^{n}})$
into $D(1;\frac{\epsilon}{2^{n}})$. If we define $f_{n}=\mathbb{Z}^{\alpha_{n}}\circ f$
for all $n\geq1$, then it is a straightforward verification that
$(f_{n})_{n}$ is a bai for $M_{z_{0}}$.
\end{proof}

\section{The set of $z\in K$ satisfying the Bishop $\frac{1}{4}-\frac{3}{4}$
property is equal to $\partial K$}
\begin{defn}
We say $z\in K$ satisfies the Bishop $\frac{1}{4}-\frac{3}{4}$ property
\cite{Bishop-deLeeuw} iff for every open nghd $U$ of $z$, there
exists $f\in A(K)$ such that $\left\Vert f\right\Vert _{K}\leq1$,
$f\left(z\right)$ is real and $f\left(z\right)>\frac{3}{4}$, and
$\abs{f\left(w\right)}<\frac{1}{4}$ for all $w\in K\backslash U$. \end{defn}
\begin{thm}
Let $K\in\mathcal{K}$ and $z\in K$. The following are equivalent; 
\begin{lyxlist}{00.00.0000}
\item [{\emph{10.2.1}}] $z\in\partial K$ 
\item [{\emph{10.2.2}}] $z$ is a peak point for $A(K)$ 
\item [{\emph{10.2.3}}] $z$ satisfies the Bishop $\frac{1}{4}-\frac{3}{4}$
property 
\end{lyxlist}
\end{thm}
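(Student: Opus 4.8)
The plan is to establish the cycle of implications $10.2.1 \Rightarrow 10.2.2 \Rightarrow 10.2.3 \Rightarrow 10.2.1$, using the heavy machinery already in place. The implication $10.2.1 \Rightarrow 10.2.2$ is exactly Theorem 8.3 (every topological boundary point of $K$ is a peak point for $A(K)$), so nothing new is needed there. For the reverse-direction implication $10.2.2 \Rightarrow 10.2.1$, recall that by the maximum modulus principle a peak point must lie in $\partial K$: if $f$ peaks at $z$ and $z \in \mathrm{int}(K)$, then $f$ attains its maximum modulus at an interior point of a connected open subset of $\mathrm{int}(K)$ and is therefore constant there, contradicting $\abs{f(w)}<1$ for $w \neq z$. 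Actually it is cleanest to simply note $10.2.3 \Rightarrow 10.2.1$ directly and close the cycle, so I would organize the proof as $10.2.1 \Rightarrow 10.2.2 \Rightarrow 10.2.3 \Rightarrow 10.2.1$.

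For $10.2.2 \Rightarrow 10.2.3$: suppose $z$ is a peak point and let $U$ be any open neighborhood of $z$. The first step is to invoke Lemma 7.6 (with $z_0 = z$): since $z \in \partial K = (\partial K)_{\mathrm{ni}}$ unless $z$ is isolated, we obtain $g \in A(K)$ with $g(w)=0$ iff $w=z$ and $g(K) \subset \{0\} \cup \bigl(D(0;\tfrac19)\cap C_{1/2}\bigr)$ after the normalization used in the proof of (C)$\Rightarrow$(A) — in fact it is easier to work directly from part (B) and rescale so that $g(K) \subset \{0\}\cup D(\tfrac12;\tfrac12)$ with $\norm g \le 1$. (If $z$ is isolated the claim is trivial: take $f$ equal to $1$ at $z$ and $0$ elsewhere.) The function $1-g$ then already peaks at $z$. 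To get the $\tfrac14$-$\tfrac34$ estimate on $K \setminus U$, the key step is to compose with $\mathbb{Z}^\alpha$ for small $\alpha$: since $g$ is a homeomorphism on $K$, $g(U \cap K)$ is a relatively open neighborhood of $0$ in $g(K)$, so there is $\rho>0$ with $D(0;\rho)\cap g(K) \subset g(U\cap K)$, hence $g(K\setminus U) \subset \overline{D}(\tfrac12;\tfrac12)\setminus D(0;\rho)$. By the Teardrop Lemma, part (2.2.6), for $\alpha$ sufficiently small $\mathbb{Z}^\alpha$ maps $\overline{D}(\tfrac12;\tfrac12)\setminus D(0;\rho)$ into $D(1;\tfrac14)$ (take $\delta = \tfrac14$ there), while $\mathbb{Z}^\alpha(0)=0$ and $\mathbb{Z}^\alpha$ maps $\overline{D}(\tfrac12;\tfrac12)$ into the teardrop $T_\alpha \subset \overline{D}(0;1)$, so $\norm{\mathbb{Z}^\alpha \circ g}_K \le 1$. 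Then $h := 1 - \mathbb{Z}^\alpha \circ g \in A(K)$ satisfies $h(z) = 1 > \tfrac34$ (real), $\norm h_K \le 2$ — so after dividing by $2$, or better by observing $\abs h \le 1$ on the teardrop geometry, one adjusts constants — and for $w \in K\setminus U$, $\abs{h(w)} = \abs{1 - \mathbb{Z}^\alpha(g(w))} < \tfrac14$. A small bookkeeping point: to get $f(z)$ real, strictly greater than $\tfrac34$, and $\norm f_K \le 1$ simultaneously, I would use $\mathbb{Z}^\alpha \circ g$ whose image lies in the thin cone $C_{\alpha/2}$ near $0$ and near $1$, so $1 - \mathbb{Z}^\alpha\circ g$ has modulus $\le 1$ throughout and value exactly $1$ at $z$; choosing $\alpha$ small enough forces the $K\setminus U$ part into $D(1;\tfrac14)$.

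For $10.2.3 \Rightarrow 10.2.1$: suppose $z \in \mathrm{int}(K)$ but $z$ has the Bishop $\tfrac14$-$\tfrac34$ property. Pick an open disk $U = D(z;r)$ with $\overline{U} \subset \mathrm{int}(K)$. The property gives $f \in A(K)$ with $\norm f_K \le 1$, $f(z) > \tfrac34$ real, and $\abs{f(w)} < \tfrac14$ for $w \in K\setminus U$; in particular $\abs{f(w)} < \tfrac14$ on $\partial U = C(z;r)$. But $f$ is holomorphic on $\mathrm{int}(K) \supset \overline{U}$, so by the maximum modulus principle $\abs{f(z)} \le \max_{\partial U}\abs f < \tfrac14$, contradicting $f(z) > \tfrac34$. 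Hence $z \in \partial K$.

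The main obstacle is purely the constant-chasing in $10.2.2 \Rightarrow 10.2.3$: one must verify that a single application of $\mathbb{Z}^\alpha$ to the rescaled peaking function simultaneously keeps the whole image inside the closed unit disk (so $\norm{1-\mathbb{Z}^\alpha\circ g}_K \le 1$), pins the value at $z$ to be exactly $1$ (hence real and $>\tfrac34$), and — by choosing $\alpha \le \alpha_{\rho,1/4}$ from the Teardrop Lemma — pushes $g(K\setminus U)$, which avoids a disk $D(0;\rho)$, into $D(1;\tfrac14)$ so that $\abs{1-\mathbb{Z}^\alpha(g(w))}<\tfrac14$ off $U$. None of this is deep — it is exactly the teardrop estimates of Lemma 2.2 combined with Lemma 7.6 — but it is where all the care goes; the other two implications are essentially one line each (Theorem 8.3 and the maximum modulus principle).
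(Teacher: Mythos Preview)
Your proof is correct, but for $10.2.2\Rightarrow 10.2.3$ you take a different route from the paper. The paper simply takes the peaking function $f$, forms $g=\tfrac12(1+f)$ so that $g(z)=1$, $\norm{g}_K\le 1$, and $g(K\setminus U)$ is a compact subset of $D(\tfrac12;\tfrac12)$ with $\sup_{K\setminus U}\abs{g}=\rho<1$; then a sufficiently high power $g^n$ gives $\abs{g^n}<\tfrac14$ on $K\setminus U$ while $g^n(z)=1$. Your approach instead passes through Lemma~7.6 and the Teardrop Lemma, composing with $\mathbb{Z}^\alpha$ to squeeze $g(K\setminus U)$ into $D(1;\tfrac14)$ and then taking $1-\mathbb{Z}^\alpha\circ g$. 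Both work, but the paper's power trick is considerably lighter and avoids all the constant-chasing you flag as the ``main obstacle''. One small correction: Lemma~7.6(B) does \emph{not} assert that $g$ is a homeomorphism on $K$, only that $g(w)=0$ iff $w=z_0$; your argument survives because you really only need that $g(K\setminus U)$ is a compact set missing $0$, which follows from continuity and compactness alone. The implications $10.2.1\Leftrightarrow 10.2.2$ (Theorem~7.8) and $10.2.3\Rightarrow 10.2.1$ (maximum modulus on a small disk in $\int(K)$) match the paper's argument.
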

\begin{proof}
10.2.1 iff 10.2.2: This is just Theorem 7.8. 

10.2.2 implies 10.2.3: Let $U$ be any open neighborhood of $z$.
and let $f$ be a peaking function at $z$. The function $g=\frac{1}{2}(1+f)$
is also a peaking function at $z$ with $\left\Vert g\right\Vert _{K}\leq1$,
$g\left(z\right)=1>\frac{3}{4}$. Furthermore, the image of $g$ is
contained in $\{1\}\bigcup D(\frac{1}{2};\frac{1}{2})$. The image,
$g(K\backslash U)$, is a compact subset of $\{1\}\bigcup D(\frac{1}{2};\frac{1}{2})$
disjoint from $\{1\}$. Thus, $\sup\left\{ \abs{w-0}:w\in g(K\backslash U)\coloneqq\rho<1\right\} $.
By taking a sufficiently high power, $g^{n}$, of $g$, it is clear
that $g^{n}$ satisfies $\left\Vert g^{n}\right\Vert _{K}\leq1$,
$g^{n}\left(z\right)=1>\frac{3}{4}$ and $\abs{g^{n}\left(w\right)}<\frac{1}{4}$
for all $w\in K\backslash U$ as was to be proved. 

10.2.3 implies 10.2.1: We prove the contrapositive., i.e., negation
of 10.2.1 implies negation of 10.2.3. Let $z\notin\partial K$. Thus
$z\in\int(K)$. Let $U\subset\int K$ be an open connected neighborhood
of $z$, e.g., the open connected component of $\int K$ that contains
$z$. Suppose there exists $f\in A(K)$ such that $\left\Vert f\right\Vert _{K}\leq1$,
$f\left(z\right)>\frac{3}{4}$ and $\abs{f\left(w\right)}<\frac{1}{4}$
for all $w\in K\backslash U$. By the maximum modulus theorem for
$U$: on the one hand, $\max_{w\in\bar{U}}\abs{f\left(w\right)}\geq f\left(z\right)>\frac{3}{4}$
while on the other $\abs{f\left(w\right)}\leq\frac{1}{4}$ for $w\in\partial U$.
Thus, $z$ cannot satisfy the Bishop $\frac{1}{4}-\frac{3}{4}$ property.
\end{proof}

\section{The set of strong boundary points for $A\left(K\right)$ is equal
to $\partial K$}
\begin{defn}
We say that $z\in K$ is a strong boundary point for $A\left(K\right)$
iff for every open neighborhood $U_{z}$, there exists $f\in A\left(K\right)$
such that $\left\Vert f\right\Vert =f\left(z\right)=1$ and for every
$w\in K-U_{z}$, $\abs{f\left(w\right)}<1$. The set of strong boundary
points for $A\left(K\right)$ is denoted $SB\left(A\left(K\right)\right)$.\end{defn}
\begin{thm}
$SB\left(A\left(K\right)\right)=\partial K$.\end{thm}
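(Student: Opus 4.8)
The plan is to prove the two inclusions $SB(A(K)) \subseteq \partial K$ and $\partial K \subseteq SB(A(K))$ separately, exactly paralleling the structure of the proofs in Sections~9 and~10.

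For the inclusion $SB(A(K)) \subseteq \partial K$, I would argue by contradiction. Suppose $z \in \mathrm{int}(K)$ is a strong boundary point. Let $U$ be an open connected neighborhood of $z$ with $\overline{U} \subset \mathrm{int}(K)$ (for instance, a small open disk about $z$ whose closure lies in $\mathrm{int}(K)$). By the definition of strong boundary point there exists $f \in A(K)$ with $\|f\| = f(z) = 1$ and $|f(w)| < 1$ for all $w \in K \setminus U$; in particular $|f(w)| < 1$ on $\partial U$. But $f$ is holomorphic on $\mathrm{int}(K) \supseteq \overline{U}$, so by the maximum modulus principle $\sup_{w \in \overline{U}} |f(w)| = \sup_{w \in \partial U} |f(w)| < 1$, contradicting $f(z) = 1$ with $z \in U$. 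Hence $z \in \partial K$.

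For the reverse inclusion $\partial K \subseteq SB(A(K))$, fix $z_0 \in \partial K$ and an arbitrary open neighborhood $U_{z_0}$. By Theorem~7.8 (equivalently Theorem~8.3), $z_0$ is a peak point for $A(K)$, so there is $g \in A(K)$ with $g(z_0) = 1$ and $|g(w)| < 1$ for all $w \in K \setminus \{z_0\}$. Following the argument in the proof of ``10.2.2 implies 10.2.3,'' replace $g$ by $h = \tfrac12(1 + g)$, which still peaks at $z_0$, has $\|h\| \le 1$ (indeed $h(K) \subseteq \{1\} \cup D(\tfrac12;\tfrac12)$), and $h(z_0) = 1$. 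The image $h(K \setminus U_{z_0})$ is a compact subset of $\{1\} \cup D(\tfrac12;\tfrac12)$ not containing $1$, hence $\rho := \sup\{ |w| : w \in h(K \setminus U_{z_0}) \} < 1$. Then for a sufficiently large power $n$, the function $f := h^n \in A(K)$ satisfies $\|f\| = f(z_0) = 1$ and $|f(w)| = |h(w)|^n \le \rho^n < 1$ for all $w \in K \setminus U_{z_0}$. Thus $z_0 \in SB(A(K))$.

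I expect no serious obstacle here: the statement is essentially a repackaging of results already established, and indeed $SB(A(K)) = \partial K$ is equivalent to the assertion that $\partial K$ equals the set of peak points together with the trivial observation that every peak point is automatically a strong boundary point (take any power of the peaking function). The only point requiring a small amount of care is the choice of the connected neighborhood $U$ with $\overline{U} \subset \mathrm{int}(K)$ in the first inclusion, so that the maximum modulus principle applies cleanly on $\overline{U}$; this is immediate since $\mathrm{int}(K)$ is open. One could alternatively cite Theorem~10.2 directly and observe that a strong boundary point trivially satisfies the Bishop $\tfrac14$--$\tfrac34$ property, but giving the self-contained two-inclusion argument keeps the section uniform with the rest of the paper.
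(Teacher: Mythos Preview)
Your proof is correct and follows essentially the same approach as the paper: both directions use the maximum modulus principle for $SB(A(K)) \subseteq \partial K$ and Theorem~7.8 for $\partial K \subseteq SB(A(K))$. The only difference is that for the latter inclusion the paper simply observes that a peaking function $f$ at $z_0$ already satisfies $\|f\| = f(z_0) = 1$ and $|f(w)| < 1$ for every $w \in K \setminus U_{z_0}$ (since this holds for all $w \neq z_0$), so your passage to $h = \tfrac{1}{2}(1+g)$ and then to $h^n$ is unnecessary---as you yourself remark in your final paragraph.
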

\begin{proof}
If $\int\left(K\right)=\emptyset$, then $K=\left(\partial K\right)_{\mathrm{II}}=\partial K=\mathcal{P}\left(A\left(K\right)\right)$
(see (ii) under Facts in section 1). Also, $A\left(K\right)=C(K)$
by definition of $A\left(K\right)$. Thus, $SB\left(A\left(K\right)\right)\subset K=\partial K$.
Conversely, if $z\in\partial K=K$, then by the properties of $C\left(K\right)=A\left(K\right)$,
it is easy to see that $z$ satisfies the conditions in the definition
of a strong boundary point, hence $z\in SB\left(A\left(K\right)\right)$.

If $\int\left(K\right)\neq\emptyset$, then we proceed as follows.
Let $z\in\partial K$. By theorem 7.8, $z$ is a peak point for $A\left(K\right)$.
Clearly, by definition of a peak point, $z$ satisfies the conditions
for being a strong boundary point, so $z\in SB\left(A\left(K\right)\right)$.

Conversely, let $z\in SB\left(A\left(K\right)\right)$. We must show
that $z\in\partial K$. Suppose not; then $z\in\int\left(K\right)$.
From \cite{Bachar1}, Theorem 4.1, (iv), $\int K$ is the disjoint
union of at most countably many open, connected, simply-connected
sets (the ``components'' of $\int\left(K\right)$). Let $U_{z}$
be the component that contains $z$. Select a sufficiently small closed
disk such that $z\in\bar{D}\left(z;r\right)\subset U_{z}$. With respect
to the open neighborhood, $D\left(z;r\right)$, every $f\in A\left(K\right)$
satisfying $\left\Vert f\right\Vert =f\left(z\right)=1$ can \uline{not}
satisfy ``$\abs{f\left(w\right)}<1$ for every $w\in K-D\left(z;r\right)$''
because of the maximum modulus principle. Hence, $z\in\partial K$.\end{proof}

\end{document}